\long\def\symbolfootnote[#1]#2{\begingroup%
\def\thefootnote{\fnsymbol{footnote}}\footnote[#1]{#2}\endgroup}
\newenvironment{customtheorem}[1]
  {\innercustomthm}
  {\endinnercustomthm}
\newtheorem{theorem}{Theorem}[section]
\newtheorem{corollary}[theorem]{Corollary}
\newtheorem{proposition}[theorem]{Proposition}
\newtheorem{lemma}[theorem]{Lemma}
\newtheorem{conjecture}[theorem]{Conjecture}
\newtheorem{question}[theorem]{Question}
\theoremstyle{definition}
\newtheorem{definition}[theorem]{Definition}   
\newtheorem{remark}[theorem]{Remark}      
\newtheorem{remarks}[theorem]{Remarks}
\DeclareMathSymbol{\Alpha}{\mathalpha}{operators}{"41}
\DeclareMathSymbol{\Beta}{\mathalpha}{operators}{"42}
\DeclareMathSymbol{\Epsilon}{\mathalpha}{operators}{"45}
\DeclareMathSymbol{\Zeta}{\mathalpha}{operators}{"5A}
\DeclareMathSymbol{\Eta}{\mathalpha}{operators}{"48}
\DeclareMathSymbol{\Iota}{\mathalpha}{operators}{"49}
\DeclareMathSymbol{\Kappa}{\mathalpha}{operators}{"4B}
\DeclareMathSymbol{\Mu}{\mathalpha}{operators}{"4D}
\DeclareMathSymbol{\Nu}{\mathalpha}{operators}{"4E}
\DeclareMathSymbol{\Omicron}{\mathalpha}{operators}{"4F}
\DeclareMathSymbol{\Rho}{\mathalpha}{operators}{"50}
\DeclareMathSymbol{\Tau}{\mathalpha}{operators}{"54}
\DeclareMathSymbol{\Chi}{\mathalpha}{operators}{"58}
\DeclareMathSymbol{\omicron}{\mathord}{letters}{"6F}
\newcommand{\R}{\mathbb{R}}
\newcommand{\C}{\mathbb{C}}
\newcommand{\Z}{\mathbb{Z}}
\newcommand{\K}{\mathbb{K}}
\newcommand{\ZZ}{{\widehat{\mathbb Z}}}
\newcommand{\Q}{\mathbb{Q}}
\newcommand{\I}{\mathbb{I}}
\newcommand{\D}{\mathbb{D}}
\newcommand{\ovr}{\overrightarrow}
\renewcommand\Im{\operatorname{Im}}
\def\Aut{\operatorname{Aut}}
\def\Inn{\operatorname{Inn}}
\def\inn{\operatorname{inn}}
\def\Out{\operatorname{Out}}
\def\tr{\operatorname{tr}}
\def\Hom{\operatorname{Hom}}
\def\Link{\operatorname{Link}}
\def\Spec{\operatorname{Spec}}
\def\GT{\widehat{\operatorname{GT}}}
\def\P{{\mathbb P}}
\def\cP{{\mathcal P}}
\def\cS{{\mathcal S}}
\def\cL{{\mathcal L}}
\def\cI{{\mathcal I}}
\def\hL{\widehat{\mathcal L}}
\def\sr{\stackrel}
\def\fT{{\mathfrak T}}
\def\cC{{\mathscr C}}
\def\cF{{\mathcal F}}
\def\hF{{\widehat{\operatorname{F}}}}
\def\cG{{\mathcal G}}
\def\cM{{\mathcal M}}
\def\cP{{\mathcal P}}
\def\hP{{\widehat\Pi}}
\def\hp{{\widehat\pi}}
\def\hI{{\hat{I}}}
\def\ccM{{\overline{\mathcal M}}}
\def\dd{\partial}
\def\l{{\lambda}}
\def\g{{\gamma}}
\def\G{{\Gamma}}
\def\U{{\Upsilon}}
\def\PG{{\mathrm{P}\Gamma}}
\def\kG{{\check\Gamma}}
\def\kPG{{\mathrm{P}\check\Gamma}}
\def\hG{{\widehat\Gamma}}
\def\hU{{\widehat\Upsilon}}
\def\hPG{{\mathrm{P}\widehat\Gamma}}
\def\d{{\delta}}
\def\s{{\sigma}}
\def\u{{\upsilon}}
\def\tS{{\widetilde S}}
\def\dS{{\partial S}}
\def\kC{{\check  C}}
\def\hC{{\widehat{C}}}
\def\ccC{{\overline{\cC}}}
\def\ssm{\smallsetminus}
\def\ol{\overline}
\def\td{\tilde}
\def\wh{\widehat}
\def\hookra{\hookrightarrow}
\def\Ra{\Rightarrow}
\def\co{\colon\thinspace}
\begin{document}

\title{Automorphisms of profinite mapping class groups}

\author[M. Boggi]{Marco Boggi}

\address{UFF - Instituto de Matem\'atica e Estat\'{\i}stica - Niter\'oi - RJ 24210-200, Brazil.}
\email{marco.boggi@gmail.com}

\begin{abstract}    
For $S=S_{g,n}$ a closed orientable differentiable surface of genus $g$ from which $n$ points have been removed,
such that $\chi(S)=2-2g-n<0$, let $\PG(S)$ be the pure mapping class group of $S$ and $\hPG(S)$ and $\kPG(S)$ be, respectively, its profinite
and its congruence completions, the latter being identified with the image of the natural representation $\hPG(S)\to\Out(\hp_1(S))$
(where $\hp_1(S)$ is the profinite completion of the fundamental group of the surface $S$).
We determine the automorphism groups of procongruence completions under a natural rigidity condition, 
and show that the profinite Grothendieck-Teichm\"uller group embeds into the outer automorphism group of the profinite completion.

Let $\Out^{\I_0}(\hPG(S))$ and $\Out^{\I_0}(\kPG(S))$ be the groups of outer automorphisms which preserve 
the conjugacy class of a procyclic subgroup generated by a nonseparating Dehn twist (a condition trivially satisfied for $g=0$).
Our main result gives that, for $\chi(S)<g-2$ and $(g,n)\neq (1,2)$, there is a natural isomorphism:
\[\Out^{\I_0}(\kPG(S))\cong\Sigma_{n}\times\GT,\]
where $\Sigma_{n}$ is the symmetric group on $n$ letters and $\GT$ denotes the profinite Grothendieck-Teichm\"uller group.
We also prove that, for $\chi(S)<g-2$, there is a natural faithful representation $\GT\hookra\Out^{\I_0}(\hPG(S))$.
\vskip 0.2cm
\noindent AMS Subject Classification: Primary 20F34; Secondary 20E18, 20E26, 14G32, 14D23.

\end{abstract}


\maketitle

\section{Introduction}
Let $S=S_{g,n}$ be a closed orientable differentiable surface of genus $g(S)=g$ from which $n(S)=n$ points have been removed. 
We denote by $\chi(S)=2-2g-n$ the Euler characteristic of $S$, which we assume to be negative, and by $d(S)$ the \emph{modular dimension} 
of $S$, that is to say the number of moduli of a Riemann surface diffeomorphic to $S$. More precisely, to the topological surface $S$, we associate
the moduli stacks $\cM(S)$, which parameterizes smooth projective curves whose complex models are diffeomorphic to $S$, and  $\mathrm{P}\cM(S)$, 
which parameterizes the same curves but on which an order of the punctures has been fixed.
They are smooth irreducible DM stacks defined over $\Spec(\Z)$ of dimension $d(S)=3g-3+n$. 

We let $\G(S)$ be the mapping class group of $S$, that is to say the group of isotopy classes of orientation preserving diffeomorphisms of $S$,
and $\PG(S)$ the \emph{pure} mapping class group of $S$, that is to say the kernel of the natural representation $\G(S)\to\Sigma_{n(S)}$,
where $\Sigma_{n(S)}$ denotes the symmetric group on the punctures of $S$. For an algebraic stack $X$ over $\Spec(\Z)$ and a field $\K$, 
let $X_\K:=X\times\Spec(\K)$. The groups $\G(S)$ and $\PG(S)$ can then be identified, respectively, with the 
topological fundamental groups of the complex DM stacks $\cM(S)_\C$ and $\mathrm{P}\cM(S)_\C$.

A classical rigidity result of Ivanov states that, for $g\geq 3$, all automorphisms of $\G(S)$ are induced by the inner automorphisms of the
extended mapping class group, so that the outer automorphism group of $\G(S)$ is a cyclic group of order $2$ (cf.\ \cite{[I2]}).
McCarthy later computed this group also in genus $\leq 2$ (cf.\ \cite{[McC]}). If we let $\Aut^{\I_0}(\G(S))$ be the group of automorphisms of $\G(S)$
which preserve the conjugacy class of a cyclic subgroup generated by a nonseparating Dehn twist and $\Out^{\I_0}(\G(S))$ be its quotient by the 
group of inner automorphisms, all the previous results can be stated in a uniform way by saying that there are, for $d(S)>1$, a natural isomorphism:
\[\Out^{\I_0}(\G(S))\cong\{\pm 1\};\] 
and, for $d(S)>1$ and $S\neq S_{1,2}$, a natural isomorphism:
\[\Out^{\I_0}(\PG(S))\cong\Sigma_{n(S)}\times\{\pm 1\},\]
where the symmetric group $\Sigma_{n(S)}$ identifies with $\Inn(\G(S))/\Inn(\PG(S))$ and $\{\pm 1\}$ with its centralizer in $\Out^{\I_0}(\PG(S))$. 

Note that, for $d(S)>1$ and $Z(\G(S))=\{1\}$, namely for $(g,n)\neq (0,4), (1,1),(1,2)$ and $(2,0)$, the $\I_0$-condition is 
always satisfied. However, for $Z(\G(S))\neq\{1\}$, for instance, we have that $\Aut^{\I_0}(\G(S))\subsetneq\Aut(\G(S))$ 
(cf.\ \cite[Theorem~1, (3)]{[McC]}). 

In this paper, we determine the automorphism groups of the congruence completions of mapping class groups.

The \emph{profinite mapping class group} $\hG(S)$ and the \emph{pure profinite mapping class group} $\hPG(S)$ are, respectively, 
the profinite completions of $\G(S)$ and $\PG(S)$.
The \emph{procongruence mapping class group} $\kG(S)$ and the \emph{pure procongruence mapping class group} $\kPG(S)$ 
are, respectively, the images of $\hG(S)$ and $\hPG(S)$ in the profinite group $\Out(\hp_1(S))$. 

The \emph{congruence subgroup problem} for mapping class groups asks whether the natural epimorphism $\hG(S)\to\kG(S)$, or equivalently 
$\hPG(S)\to\kPG(S)$, is an isomorphism. This is known to be true only for $g(S)\leq 2$ (cf.\ \cite{hyp} and \cite{congtop}, for instance). 

Let $\cC(S)\to\cM(S)$ and $\mathrm{P}\cC(S)\to\mathrm{P}\cM(S)$ be the universal punctured curves. The profinite mapping class groups 
$\hG(S)$ and $\hPG(S)$ then identify, respectively, with the \'etale fundamental groups of $\cM(S)_{\ol{\Q}}$ and $\mathrm{P}\cM(S)_{\ol{\Q}}$, 
while the procongruence mapping class groups $\kG(S)$ and $\kPG(S)$ identify, respectively, with the images of the universal monodromy 
representations associated to the curves $\cC(S)_{\ol{\Q}}\to\cM(S)_{\ol{\Q}}$ and $\mathrm{P}\cC(S)_{\ol{\Q}}\to\mathrm{P}\cM(S)_{\ol{\Q}}$.

By a classical result of Grossman the natural homomorphism $\G(S)\to\kG(S)$ is injective and so, in particular, we also have an 
embedding $\G(S)\hookra\hG(S)$. We then identify $\G(S)$ with its image in $\kG(S)$ (resp.\ $\hG(S)$). Hence, we associate to
a simple closed curve $\g$ on $S$ the Dehn twist $\tau_\g\in\G(S)\subset\kG(S)$ (resp.\ $\subset\hG(S)$). The set of
\emph{profinite Dehn twists} in $\kG(S)$ (resp.\ $\hG(S)$) is the closure of the set of Dehn twists. 

A basic advantage of dealing with procongruence mapping class groups is that they have a well developed combinatorial theory 
(cf.\ \cite{[B3]}, \cite{[BZ2]} and \cite{congtop}). In fact, for the procongruence mapping 
class group $\kG(S)$, there is a more intrinsic description of profinite Dehn twists in terms of \emph{profinite simple closed curves} on $S$
(cf.\ Section~\ref{multicurves} and \cite[Section~4]{[B3]}).

A fundamental feature of mapping class groups is that, when the center is trivial, automorphisms preserve the conjugacy classes of Dehn twists.
In the profinite setting, essentially thanks to a result by Hoshi, Minamide and Mochizuki (cf.\ \cite[Corollary~C]{HMM}), this is known to be true in genus $0$ 
(cf.\ \cite[Theorem~6.6]{BF2}) but remains a major open problem in positive genus.

It is therefore natural to restrict to automorphisms preserving the conjugacy classes of profinite Dehn twists (the so called \emph{inertia condition}).
A key result of this paper is that it is in fact sufficient to impose the inertia condition only on nonseparating Dehn twists (cf.\ Theorem~\ref{IvsIns}). 

Under this natural rigidity condition, we obtain a complete classification of the automorphism groups of congruence completions of mapping class groups
in terms of the profinite \emph{Grothendieck-Teichm\"uller group} $\GT$ introduced by Drinfeld in \cite{Drinfeld}.

To be more precise, let $\Aut^{\I_0}(\kG(S))$ (resp.\ $\Aut^{\I_0}(\kPG(S))$)  be the group of automorphisms of $\kG(S)$ (resp.\ $\kPG(S)$) which preserve 
the conjugacy class of a procyclic subgroup generated by a nonseparating Dehn twist and let $\Out^{\I_0}(\kG(S))$, $\Out^{\I_0}(\kPG(S))$
be the corresponding outer automorphism groups. Our main result provides a complete classification of these automorphism groups 
(Theorem~\ref{GT=Out} is in fact somewhat more general):

\begin{customtheorem}{A}\begin{enumerate}
\item For $d(S)>1$, there is a natural isomorphism: 
\[\Out^{\I_0}(\kG(S))\cong\GT.\]
\item For $d(S)>1$ and $S\neq S_{1,2}$, there is a natural isomorphism: 
\[\Out^{\I_0}(\kPG(S))\cong\Sigma_{n(S)}\times\GT,\]
where the symmetric group $\Sigma_{n(S)}$ identifies with $\Inn(\kG(S))/\Inn(\kPG(S))$ and $\GT$
with the centralizer of this subgroup.
\end{enumerate}
\end{customtheorem}

\begin{remark}\label{credits}
The genus $0$ case of Theorem~A was essentially proved by Harbater and Schneps in \cite{HS} and Hoshi, Minamide and Mochizuki in \cite{HMM} 
(cf.\ Proposition~\ref{purevsfullgenus0}). The case $(g,n)=(1,2)$ by Minamide and Nakamura in \cite{MN} (cf.\ Proposition~\ref{g=1n=2}).
\end{remark}

The proof of Theorem~A consists of two steps. In the first one, for $S$ and $S'$ hyperbolic surfaces such that 
$g(S)\geq g(S')$ and $\chi(S)\leq\chi(S')<0$, we construct a natural homomorphism
\[\mu_{S,S'}\co\Out^{\I_0}(\kG(S))\to\Out^{\I_0}(\kG(S'))\] 
and we show that, essentially as a consequence of the theory developed in \cite{BF} and \cite{BF2}, $\mu_{S,S'}$ is injective. 
In particular, by the genus $0$ case of the theorem, we get a natural monomorphism:
\[\mu_S\co\Out^{\I_0}(\kG(S))\hookra\GT.\]

The second step then consists in showing that there is a natural (i.e.\ compatible with the homomorphism
$\mu_S$) homomorphism $\Psi_S\co\GT\to\Out^{\I_0}(\kG(S))$. In order to construct this homomorphism, we first prove what is, in its own right, 
one of the main results of the paper (cf.\ Theorem~\ref{GTrepr} for a stronger statement):

\begin{customtheorem}{B}For a hyperbolic surface $S$ such that $d(S)> 1$, there is a natural faithful representation:
\[\hat{\rho}_{\GT}\co\GT\hookra\Out^{\I_0}(\hPG(S)),\]
where $\Out^{\I_0}(\hPG(S))$ is the group of outer automorphisms of $\hPG(S)$ which preserve the conjugacy class of
the procyclic subgroup generated by a nonseparating Dehn twist.
\end{customtheorem}

The proof of Theorem~B can also be split in two steps. In the first one, we use the theory of profinite hyperelliptic mapping class groups to
reduce the cases of genus $\leq 2$ to those of genus $0$ (cf.\ Section~\ref{hypsection} to Section~\ref{proofcomplete}). In the second step, 
thanks to a profinite version (cf.\ Corollary~\ref{proamalgamatedproduct}) of a presentation given by Gervais in \cite{Gervais} for pure mapping 
class groups, we are able to extend the $\GT$-action on profinite pure mapping class groups from genus $\leq 2$ to higher genus.
The representation $\Psi_S$ is obtained observing that the representation $\hat{\rho}_{\GT}$ preserves the congruence kernel
and then composing with the natural homomorphism $\Out^{\I_0}(\kPG(S))\to\Out^{\I_0}(\kG(S))$ obtained in Theorem~\ref{compurevsfull}.

A preliminary announcement of related results appeared in \cite{LNS} (cf.\ (ii) Th\'eor\`eme Principal), with a more complete treatment given recently 
in \cite{LNS2}. Previous versions of the present work predate these new developments.


\subsection{Applications to Grothendieck-Teichm\"uller theory}
A primary source of interest in the outer automorphism groups of profinite mapping class groups comes from the observation, made by Grothendieck 
in \cite[\emph{Esquisse d'un Programme}]{Esquisse}, that, by Bely\v{\i} theorem, they contain a copy of the absolute Galois group of the rationals $G_\Q$. 

Grothendieck suggested that the \'etale fundamental groups of the moduli stacks $\mathrm{P}\cM(S)$ should 
be assembled together in what he called the \emph{Teichm\"uller tower} and that the automorphism group of this tower should be already determined 
by its truncation at modular dimension $2$. He did not give many details on how to proceed in what was and remained the sketch of a research plan. 
It was not even clear what should be done with this automorphism group but a reasonable guess is that he hoped that it would provide a combinatorial 
description of the absolute Galois group $G_\Q$ (cf.\ Conjecture~\ref{Grothendieck}).

In the paper \cite{Drinfeld}, Drinfeld initiated the study of the genus $0$ stage of the Grothendieck-Teichm\"uller tower and, to this end, he introduced the 
Grothendieck-Teichm\"uller group $\GT$. From the point of view which is relevant here, this can be described as the group of automorphisms 
of the first two levels (modular dimensions $1$ and $2$) of the genus $0$ stage of the Teichm\"uller tower which respect a suitable inertia condition
(cf.\ \cite{HS}).

Drinfeld then speculated, following Grothendieck, that $\GT$ should be the automorphism group of the Teichm\"uller tower sketched by Grothendieck but 
he did not give many details on how the tower should, in a precise way, be defined and such statement proved. 
In this paper, we will adopt the definition of the Teichm\"uller tower (cf.\ Section~\ref{deftower}) proposed by Hatcher, Lochak and Schneps in \cite{HLS}.

The study of the genus $0$ case was essentially completed by Harbater and Schneps (cf.\ \cite[Main Theorem]{HS}). 
As already mentioned above, more recently, Hoshi, Minamide and Mochizuki have improved their result by showing that the inertia condition
is in fact automatically satisfied (cf.\ \cite[Corollary~C]{HMM}). 

Theorem~A and B apply, in particular, to the study of the Grothendieck-Teichm\"uller tower. For the precise definition of the profinite and the 
procongruence Grothendieck-Teichm\"uller towers $\hat\fT^\mathrm{out}$ and $\check\fT^\mathrm{out}$, we refer the reader to 
Section~\ref{deftower}. Here it suffices to say that, in order to get enough morphisms between objects, it is necessary to 
consider also relative mapping class groups of surfaces with boundary. An almost immediate consequence of the 
general version of Theorem~A (cf.\ Theorem~\ref{GT=Out}) is then the procongruence version of the Drinfeld-Grothendieck conjecture:

\begin{customtheorem}{C}There is a natural isomorphism $\GT\cong\Aut(\check\fT^\mathrm{out})$.
\end{customtheorem}

From the stronger version of Theorem~B which we prove (cf.\ Theorem~\ref{GTrepr}), it also follows that there is a natural faithful representation
(cf.\ Corollary~\ref{GTaction}):
\[\hat\rho_{\GT}\co\GT\hookra\Aut(\hat\fT^\mathrm{out}).\]
This last result shows, in particular, that, even if the congruence subgroup problem had a negative answer, there are no extra
restrictions on Galois actions coming from the profinite rather than the procongruence Grothendieck-Teichm\"uller tower.


\section{Notations, definitions and preliminary results}
\subsection{Surfaces}\label{surfaces}
In this paper, a hyperbolic surface is a connected orientable differentiable surface of negative Euler characteristic. We denote by 
$S=S_{g,n}^k$ a closed oriented surface of genus $g(S)=g$ from which $n(S)=n$ points and $k(S)=k$ open discs have been removed.
The \emph{modular dimension} of $S$ is defined to be $d(S):=3g-3+n+k$.

For $k=0$ (resp.\ $n=0$), we let $S_{g,n}:=S_{g,n}^0$ (resp.\ $S_g^k:=S_{g,0}^k$). Also, we let $S_g:=S_{g,0}$.
The boundary of $S$ is denoted by $\dS$ and we let $\ring{S}:=S\ssm\dS$, so that $\ring{S}\cong S_{g,n+k}$.

Let then $\cS$ be the category with objects hyperbolic surfaces and maps embeddings of surfaces $S\hookra S'$ 
such that a $1$-punctured disc of $S$ is mapped either to another $1$-punctured disc or to a closed disc.

\subsection{Profinite and procongruence mapping class groups}
For $S=S_{g,n}^k$ a hyperbolic surface, we let $\G(S)$ and $\PG(S)$ be respectively the mapping class group and 
the pure mapping class group associated to $S$. Note that the inclusion $\ring{S}\hookra S$ induces, by restriction, 
a natural homomorphism $\G(S)\hookra\G(\ring{S})$ whose image is the stabilizer of the partition of the punctures of $\ring{S}$ 
into those which have a boundary in $S$ and those which do not.

For $S=S_{g,n}$, sometimes, to simplify notation, 
we will denote the corresponding mapping and pure mapping class groups by $\G_{g,[n]}$ and $\G_{g,n}$, respectively. 
These groups are then related by the short exact sequence $1\to\PG(S)\to\G(S)\to\Sigma_n\to 1$, where $\Sigma_n$ is the symmetric group on $n$ letters. 

The \emph{profinite and the pure profinite mapping class groups} $\hG(S)$ and $\hPG(S)$ are defined to be the profinite completions of the groups 
$\G(S)$ and $\PG(S)$, respectively. 

Let $\Pi$ be the fundamental group of the
surface $S$ with respect to some base point and let $\hP$ be its profinite completion. Since $\Pi$ is conjugacy separable and class automorphisms of 
$\Pi$ are inner, the group $\Out(\Pi)$ naturally embeds in $\Out(\hP)$. Moreover, since (topologically) finitely generated profinite groups are
strongly complete, $\Out(\hP)$ is a profinite group.
 
Therefore, both groups $\G(S)$ and $\PG(S)$ naturally embed in the profinite group $\Out(\hP)$ and inherit a profinite topology which is called
the \emph{congruence} topology. The completions of $\G(S)$ and $\PG(S)$ with respect to these topologies are denoted by $\kG(S)$ and $\kPG(S)$ and 
called the \emph{procongruence and the pure procongruence mapping class group}, respectively. For surfaces with empty boundary, these groups 
are studied in detail in \cite{[B3]} and \cite{congtop}. In what follows, we will recall the basic properties we need and prove a few more for surfaces 
with boundary.

\subsection{The congruence subgroup problem}\label{congsubprop}
The \emph{congruence subgroup problem} asks if there is an isomorphism $\hG(S)\cong\kG(S)$ (equiv.\ $\hPG(S)\cong\kPG(S)$).
This is known to be true only for $g(S)\leq 2$ (cf.\ \cite{Asada}, \cite{hyp} and \cite{congtop}). In the sequel, we will identify the profinite 
and the procongruence mapping class groups for $g(S)\leq 2$.

\subsection{Profinite Dehn twists and profinite braid twists}  
Let $\cL(S)$ be the set of isotopy classes of simple closed curves on $S$. We then denote by $\cL(S)_0$ the subset
of $\cL(S)$ consisting of nonperipheral curves. The set $\cL(S)_0$ parameterizes the set of \emph{Dehn twists} in $\G(S)$ 
(cf.\ \cite[Section~3.1.1]{FM} for the definition). For $\g\in\cL(S)_0$, we denote by $\tau_\g\in\PG(S)\subseteq\G(S)$ the associated Dehn twist.

Let us denote by $\cL^b(S)$ the subset of $\cL(S)$ consisting of simple closed curves bounding a $2$-punctured disc on $S$.
For $\g\in\cL^b(S)$, let $D$ be the disc bounded by $\g$. Then, the \emph{braid twist} $b_\g$ about $\g$ is the 
isotopy class of a self-homeomorphism of $S$ which is the identity outside of $D$, swaps the punctures of $B$ and satisfies 
the identity $b_\g^2=\tau_\g$.

The mapping class groups $\PG(S)$ and $\G(S)$ are generated, respectively, by Dehn twists and by Dehn twists and braid twists 
(cf.\ \cite[Corollary~4.15]{FM}).

A \emph{profinite Dehn twist} (resp.\ \emph{profinite braid twist}) of $\kG(S)$ is an element which lies in the closure of the set of Dehn twists
(resp.\ braid twists) inside the profinite group $\kG(S)$ (where we identify $\G(S)$ with a subgroup of the latter group). 

It is a remarkable and nontrivial fact that the profinite Dehn twists of $\kG(S)$ are parameterized by the profinite set of 
\emph{nonperipheral profinite simple closed curves} $\hL(S)_0$ (cf.\ \cite[Theorem~5.1]{[B3]}).
This set is realized, roughly speaking, as the closure of $\cL(S)_0$ inside the profinite set of "unoriented" conjugacy
classes in $\hP$ (cf.\ \cite[Section~4]{[B3]} and \cite[Section~3.2]{congtop} for the precise definition). 
As above, we then denote by $\tau_\g\in\kPG(S)$, for $\g\in\hL(S)_0$, the associated profinite Dehn twist.

Let $\hL^b(S)$ be the subset of $\hL(S)$ consisting of profinite simple closed curves whose topological type (cf.\ \cite[Definition~4.7]{BF}) 
is that of a simple closed curve bounding a $2$-punctured disc. Then, the the profinite braid twists of the procongruence mapping class group 
$\kG(S)$ are parameterized by this profinite set and are denoted by $b_\g$, for $\g\in\hL^b(S)$.

\subsection{Multicurves and their stabilizers}\label{stabmulticurves}
A \emph{multicurve} $\s$ on $S$ is a set of isotopy classes of disjoint simple closed curves on $S$ such that $S\ssm\s$ does not contain 
a disc or a one-punctured disc. The \emph{topological type} of $\s$ is the topological type of the surface $S\ssm\s$.
Multicurves parameterize sets of commuting Dehn twists or braid twists in $\G(S)$. The complex of curves $C(S)$ is the 
abstract simplicial complex whose simplices are multicurves on $S$. Its combinatorial dimension is $d(S)-1$. 

\begin{definition}\label{discreteinertia}For $\s\in C(S)$, we denote by $I_\s$ the abelian subgroup of $\G(S)$ generated by 
the Dehn twists $\tau_\g$, for $\g\in\s$. For $H$ a subgroup of $\G(S)$, we then let $I_\s(H):=I_\s\cap H$. 
The \emph{topological type} of $I_\s$ is the topological type of $\s$.
\end{definition}

There is a natural simplicial action of $\G(S)$ on $C(S)$. Let us describe the stabilizer of a simplex $\s\in C(S)$. 
Let $S\ssm\s=S_1\coprod\ldots\coprod S_k$, let $B_i$ be the set of punctures of $S_i$ bounded by a curve in $\s$, for $i=1,\ldots,k$, 
and let $\Sigma_{\s^\pm}$ be the symmetric group on the set of oriented circles $\s^\pm:=\s^+\cup\s^-$. Let us also denote by 
$\G(S_i)_{B_i}$ the pointwise stabilizer of the subset of punctures $B_i$ for the action of the mapping class group $\G(S_i)$, for $i=1,\ldots,k$.
Then, the stabilizer $\G(S)_\s$ is described by the two exact sequences:
\[\begin{array}{c}
1\to\G(S)_{\vec\s}\to\G(S)_\s\to\Sigma_{\s^\pm},\\
\\
1\to I_\s\to\G(S)_{\vec\s}\to\G(S_1)_{B_1}\times\ldots\times\G(S_k)_{B_k}\to 1.
\end{array}\]
 
A similar description but simpler applies to the stabilizer $\PG(S)_\s$ for the action of the pure mapping class group $\PG(S)$ on $C(S)$:
\[\begin{array}{c}
1\to\PG(S)_{\vec\s}\to\PG(S)_\s\to\Sigma_{\s^\pm},\\
\\
1\to I_\s\to\PG(S)_{\vec\s}\to\PG(S_1)\times\ldots\times\PG(S_k)\to 1.
\end{array}\]

\subsection{The procongruence curve complex}\label{multicurves}
A central object for the study of procongruence mapping class groups is a profinite version of the complex of curves. For every $k\geq 0$, 
there is a natural embedding $C(S)_k\hookra\cP_{k+1}(\hL(S))$, where $\cP_{k+1}(\hL(S))$ is
the profinite set of unordered subsets of $k+1$ distinct elements in $\hL(S)$. We then define:

\begin{definition}\label{cpc}The \emph{procongruence curve complex} (or \emph{complex of profinite curves}) $\kC(S)$ (cf.\ \cite[Section~4]{[B3]}, 
for more details) is the abstract simplicial profinite complex whose set $\kC(S)_k$ of $k$-simplices is the closure of $C(S)_k$ inside $\cP_{k+1}(\hL(S))$. 
In particular, $\kC(S)_0=\hL(S)_0$. A simplex $\s$ of $\kC(S)$ is also called a \emph{profinite multicurve}. Its \emph{topological type} 
is the topological type of a simplex in the intersection of the orbit $\kG(S)\cdot\s$ with $C(S)\subset\kC(S)$.
\end{definition}

There are continuous natural actions of the procongruence mapping class groups $\kG(S)$ and $\kPG(S)$ on $\kC(S)$.
 \cite[Theorem~4.5]{[B3]} describes the stabilizers for the action of $\kPG(S)$.
This result implies a similar description for the action of $\kG(S)$ (cf.\ \cite[Theorem~4.10]{BF}):

\begin{theorem}\label{stabilizers}For $\s\in C(S)\subset\kC(S)$, let $S\ssm\s=S_1\coprod\ldots\coprod S_k$
and $B_i$ be the set of punctures of $S_i$ bounded by a curve in $\s$, for $i=1,\ldots,k$. Then, we have:
\begin{enumerate}
\item The stabilizer $\kG(S)_\s$ is described by the two exact sequences:
\[\begin{array}{c}
1\to\kG(S)_{\vec\s}\to\kG(S)_\s\to\Sigma_{\s^\pm},\\
\\
1\to \hI_\s\to\kG(S)_{\vec\s}\to\kG(S_1)_{B_1}\times\ldots\times\kG(S_k)_{B_k}\to 1,
\end{array}\]
where $ \hI_\s$ is the profinite completion of the free abelian group $I_\s$ and $\kG(S_i)_{B_i}$ is the pointwise stabilizer 
of the subset of punctures $B_i$ in $\kG(S_i)$, for $i=1,\ldots,k$.

\item The stabilizer $\kPG(S)_\s$ is described by the two exact sequences:
\[\begin{array}{c}
1\to\kPG(S)_{\vec\s}\to\kPG(S)_\s\to\Sigma_{\s^\pm},\\
\\
1\to \hI_\s\to\kPG(S)_{\vec\s}\to\kPG(S_1)\times\ldots\times\kPG(S_k)\to 1.
\end{array}\]
\end{enumerate}
\end{theorem}

\subsection{Profinite and procongruence relative mapping class groups}\label{MCGboundary}
For a hyperbolic surface $S=S_{g,n}^k$, we let $\G(S,\dS)$ be the \emph{relative mapping class group of the surface with boundary $(S,\dS)$}, 
that is to say the group of relative, with respect to the boundary $\dS$, isotopy classes of orientation preserving diffeomorphisms of $S$.
Let $\delta_1,\ldots,\delta_k$ be the connected components of the boundary $\dS$. Then, 
the group $\G(S,\dS)$ is described by the natural short exact sequence:
\[1\to\prod_{i=1}^k\tau_{\d_i}^\Z\to\G(S,\dS)\to\G(S)\to 1.\]

The relative \emph{pure} mapping class group $\PG(S,\dS)$ is the subgroup of $\G(S,\dS)$ consisting of those elements which admit representatives 
fixing pointwise the punctures and the boundary of $S$. It is described by the short exact sequence:
\[1\to\PG(S,\dS)\to\G(S,\dS)\to\Sigma_n\times\Sigma_k\to 1.\]

\begin{remark}\label{identify}
Let $\tS=S_{g,n+2k}$, with $d(\tS)>1$ and $\s=\{\delta_1,\ldots,\delta_k\}\in C(\tS)$, where $\delta_i$ is a simple closed curve bounding 
an open disc $D_i$ containing the punctures indexed by $n+2i-1$ and $n+2i$, for $i=1,\ldots,k$. 
Let $S:=\tS\ssm\cup_{i=1}^k D_i\cong S_{g,n}^k$. The relative mapping class group $\G(S,\dS)$ then identifies with a subgroup
of the stabilizer $\G(\tS)_\s$, which is described by the short exact sequence:
\[1\to \prod_{i=1}^k\tau_{\d_i}^\Z\to\G(\tS)_\s\to(\Sigma_2)^k\rtimes\G(S)\to 1,\]
where $\G(S)$ acts on the product $(\Sigma_2)^k$ by permuting its factors through the natural representation $\G(S)\to\Sigma_k$.
There is also a natural isomorphism $\PG(S,\dS)\cong\PG(\tS)_{\vec\s}$. 
\end{remark}

The \emph{profinite (resp.\ profinite pure) relative mapping class group of the surface with boundary $(S,\dS)$} is defined to be the profinite completion 
$\hG(S,\dS)$ (resp.\ $\hPG(S,\dS)$) of $\G(S,\dS)$ (resp.\ of $\PG(S,\dS)$). From (ii) of Theorem~\ref{stabilizers} and Remark~\ref{identify},
it follows that these profinite groups are described by the short exact sequences:
\begin{equation}\label{centralext1}
\begin{array}{ll}
&1\to\prod_{i=1}^k\tau_{\d_i}^\ZZ\to\hG(S,\dS)\to\hG(S)\to 1\\
\mbox{and }&\\
&1\to\hPG(S,\dS)\to\hG(S,\dS)\to\Sigma_n\times\Sigma_k\to 1.
\end{array}
\end{equation}

\begin{definition}\label{tildedS}For a surface $S=S_{g,n}^k$ with boundary $\dS=\cup_{i=1}^k\delta_i$, we let $\tS\cong S_{g,n+2k}$ 
be the surface obtained from $S$ glueing a $2$-punctured disc over each boundary component $\delta_i$, for $i=1,\ldots,k$. 
\end{definition}

By Remark~\ref{identify}, the embedding $S\hookra \tS$ induces a monomorphism of mapping class groups 
$\G(S,\dS)\hookra\G(\tS)$, which identifies $\G(S,\dS)$ with a subgroup of the stabilizer $\G(\tS)_\s$. 

The congruence topology on $\G(\tS)$ then induces a profinite topology on $\G(S,\dS)$ which we call the
\emph{congruence topology on }$\G(S,\dS)$. This clearly also defines a profinite topology on $\PG(S,\dS)\subseteq\G(S,\dS)$, 
which we call the congruence topology on $\PG(S,\dS)$. 

The completions of $\G(S,\dS)$ and $\PG(S,\dS)$ with respect to these congruence topologies are then denoted by $\kG(S,\dS)$ 
and $\kPG(S,\dS)$, respectively, and are called the \emph{procongruence (resp.\ pure) relative mapping class group 
of the surface with boundary $(S,\dS)$}. 

By Theorem~\ref{stabilizers}, they fit in the short exact sequences:
\begin{equation}\label{centralext}
\begin{array}{ll}
&1\to\prod_{i=1}^k\tau_{\d_i}^\ZZ\to\kG(S,\dS)\to\kG(S)\to 1\\
\mbox{and }&\\
&1\to\kPG(S,\dS)\to\kG(S,\dS)\to\Sigma_n\times\Sigma_k\to 1.
\end{array}
\end{equation}
 
An immediate consequence is then:

\begin{proposition}\label{congcent}\leavevmode
\begin{enumerate}
\item For $g(S)\leq 2$, we have $\kG(S,\dS)=\hG(S,\dS)$ and $\kPG(S,\dS)=\hPG(S,\dS)$.
\item For $S\neq S_2, S_{1,2}$ and $S_1^2$, we have $Z(\kG(S,\dS))=(\prod_{i=1}^k\tau_{\d_i})^\ZZ$. Otherwise, the center $Z(\kG(S,\dS))$ 
is generated by $(\prod_{i=1}^k\tau_{\d_i})^\ZZ$ and the hyperelliptic involution.
\item  For $S\neq S_2, S_{1,2},S_1^2$ and $U$ an open subgroup of $\kPG(S,\dS)$, we have $Z_{\kG(S,\dS)}(U)=\prod_{i=1}^k\tau_{\d_i}^\ZZ$. 
Otherwise, $Z_{\kG(S,\dS)}(U)$ is generated by $\prod_{i=1}^k\tau_{\d_i}^\ZZ$ and the hyperelliptic involution.
\end{enumerate}
\end{proposition}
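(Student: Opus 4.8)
The plan is to deduce all three statements from the two central extensions~(\ref{centralext}) together with the known structure of the centers and centralizers in the (non-relative) procongruence mapping class group $\kG(S)$. The first statement is immediate: for $g(S)\le 2$ the congruence subgroup property holds, so $\hG(\tS)=\kG(\tS)$; since both $\hG(S,\dS)$ and $\kG(S,\dS)$ are, by construction, completions of $\G(S,\dS)=\G(\tS)_\s$ with respect to the profinite topologies induced from $\hG(\tS)$ and $\kG(\tS)$ respectively, and these topologies now agree, we get $\kG(S,\dS)=\hG(S,\dS)$, and likewise for the pure versions. I would spell out that $\tS\cong S_{g,n+2k}$ still has $g(\tS)=g(S)\le 2$.

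For statement (2), start from the short exact sequence $1\to\prod_i\tau_{\d_i}^\ZZ\to\kG(S,\dS)\to\kG(S)\to 1$ of~(\ref{centralext}). The kernel $A:=\prod_i\tau_{\d_i}^\ZZ$ is central in $\kG(S,\dS)$ (the $\tau_{\d_i}$ are twists about boundary-parallel curves, hence central already in $\G(S,\dS)$, and centrality passes to the completion since $\G(S,\dS)$ is dense). Thus $Z(\kG(S,\dS))$ is the preimage of the subgroup of $Z(\kG(S))$ consisting of classes that lift to central elements. Now invoke the computation of $Z(\kG(S))$: for $S$ not one of the exceptional surfaces $Z(\kG(S))=\{1\}$, which forces $Z(\kG(S,\dS))=A$; for the exceptional $S$ (whose $\tS$-companions are $S_2,S_{1,2},S_1^2$ in the relevant range — this is exactly the list in the statement) $Z(\kG(S))$ is generated by the hyperelliptic involution $\iota$, and one checks that $\iota$ lifts to an honest central element of $\kG(S,\dS)$ because the hyperelliptic involution of $\tS$ fixes $\s$ and commutes with all relative mapping classes; this gives the stated description. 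The one point to be careful about is matching the exceptional-surface list: $Z(\G_{g,n})\ne\{1\}$ precisely for $(g,n)\in\{(0,4),(1,1),(1,2),(2,0)\}$, and after gluing $2$-punctured discs the surfaces $S$ with boundary whose $\tS$ lands in this list are exactly $S_2$, $S_{1,2}$ and $S_1^2$ (with $d(S)>1$ assumed implicitly via $d(\tS)>1$).

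Statement (3) is the same argument relativized to an open subgroup $U\le\kPG(S,\dS)$. Since $U$ is open in $\kPG(S,\dS)$, its image $\bar U$ in $\kPG(S)$ is open, and an element $x\in\kG(S,\dS)$ centralizes $U$ iff its image $\bar x$ centralizes $\bar U$ (the kernel $A$ being central, it contributes nothing beyond itself). So $Z_{\kG(S,\dS)}(U)$ is the preimage in $\kG(S,\dS)$ of $Z_{\kG(S)}(\bar U)$. Here I would cite the corresponding centralizer computation for $\kG(S)$: for $S$ away from the exceptional list, the centralizer in $\kG(S)$ of any open subgroup of $\kPG(S)$ is trivial, giving $Z_{\kG(S,\dS)}(U)=A=\prod_i\tau_{\d_i}^\ZZ$; in the exceptional cases it is generated by the hyperelliptic involution, which lifts as before, giving the stated answer. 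I expect the main obstacle — really the only non-bookkeeping point — to be justifying that passing to the relative group and then to its procongruence completion does not enlarge centralizers: i.e.\ that an element of $\kG(S,\dS)$ whose image centralizes $\bar U$ genuinely centralizes $U$, not merely up to the central subgroup $A$. This follows because $A$ is central, so the commutator $[x,u]$ for $u\in U$ depends only on the images $\bar x,\bar u$; hence $[x,u]=1$ as soon as $[\bar x,\bar u]=1$ for all $u\in U$. With that observed, everything reduces cleanly to the already-established (pure) procongruence statements.
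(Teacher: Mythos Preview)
Your overall strategy---reduce everything to the short exact sequence $1\to A\to\kG(S,\dS)\to\kG(S)\to 1$ with $A=\prod_i\tau_{\d_i}^{\ZZ}$ and then invoke the known center/centralizer computations for $\kG(S)$---is exactly the paper's approach. However, your execution of (ii) contains a genuine error that changes the answer.

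You assert that $A$ is central in $\kG(S,\dS)$ because boundary twists are central in $\G(S,\dS)$. This is false as soon as $k\ge 2$: by the paper's conventions, $\G(S,\dS)$ is allowed to permute the boundary circles (this is why the second sequence in~(\ref{centralext}) has quotient $\Sigma_n\times\Sigma_k$, and it is made explicit in the proof of Proposition~\ref{I-inn}, where an element $\tau_{\d_1}^{h_1}\cdots\tau_{\d_k}^{h_k}$ with $h_i\neq h_j$ is shown \emph{not} to be central). Thus $A$ is only normal, with $\kG(S,\dS)$ acting on it through $\Sigma_k$ by permuting the factors. Consequently, when $Z(\kG(S))=\{1\}$ you get $Z(\kG(S,\dS))\subseteq A$, but then you must still cut down to the $\Sigma_k$-invariants of $A$, which is exactly the diagonal procyclic group $(\prod_i\tau_{\d_i})^{\ZZ}$---matching the statement, and not the full $A$ that your argument would yield. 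The same refinement is needed in the exceptional cases before checking that the hyperelliptic involution lifts centrally.

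For (iii) your conclusion is correct, but your justification again invokes the false centrality of $A$. What you actually need (and what is true) is the weaker fact that $A$ centralizes $\kPG(S,\dS)$ and hence $U$: the pure relative group does not permute the $\d_i$, so each $\tau_{\d_i}$ is central there. With that in hand, if $Z_{\kG(S)}(\ol U)=\{1\}$ then $\ol x=1$ forces $x\in A$, and $A$ centralizes $U$; this gives $Z_{\kG(S,\dS)}(U)=A$. Your commutator bookkeeping ``$[x,u]$ depends only on $\ol x,\ol u$'' should be rephrased accordingly. Finally, your matching of the exceptional list goes through $\tS$, but the relevant quotient in the sequence is $\kG(S)$, whose center is computed via $\ring S$ (for instance, for $S=S_1^2$ one has $\ring S=S_{1,2}$, whereas $\tS=S_{1,4}$ is not on any exceptional list); this bookkeeping should be corrected too.
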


\begin{proof}The first item follows from the congruence subgroup property in genus $\leq 2$ for surfaces without boundary and the short
exact sequences~(\ref{centralext1}) and~(\ref{centralext}). The second and the third item from the same exact sequences, 
 \cite[Corollary~6.2]{[B3]} and \cite[Theorem~4.14]{BF}.
\end{proof}

The procongruence (resp.\ pure) relative mapping class group $\kG(S,\dS)$ (resp.\ $\kPG(S,\dS)$) acts on the procongruence curve complex
$\kC(S)$ through the natural homomorphism to $\kG(S)$ (resp.\ $\kPG(S)$). For $\s\in C(S)$, let $I_\s$ be the subgroup of $\G(S,\dS)$ generated by
the Dehn twists $\tau_\g$, for $\g\in\s$ (cf.\ Definition~\ref{discreteinertia}). Then, from Theorem~\ref{stabilizers}, it easily follows:

\begin{theorem}\label{relativestabilizers}For $\s\in C(S)\subset\kC(S)$, let $S\ssm\s=S_1\coprod\ldots\coprod S_k$
and $B_i$ be the set of punctures of $S_i$ bounded by a curve in $\s$, for $i=1,\ldots,k$. Then, we have:
\begin{enumerate}
\item The stabilizer $\kG(S,\dS)_\s$ is described by the two exact sequences:
\[\begin{array}{c}
1\to\kG(S,\dS)_{\vec\s}\to\kG(S,\dS)_\s\to\Sigma_{\s^\pm},\\
\\
1\to \hI_\s\to\kG(S,\dS)_{\vec\s}\to\kG(S_1,\dS_1)_{B_1}\times\ldots\times\kG(S_k,,\dS_k)_{B_k}\to 1,
\end{array}\]
where $ \hI_\s$ is the profinite completion of the free abelian group $I_\s$ and $\kG(S_i,,\dS_i)_{B_i}$ is the pointwise stabilizer 
of the subset of punctures $B_i$ in $\kG(S_i,,\dS_i)$, for $i=1,\ldots,k$.

\item The stabilizer $\kPG(S,\dS)_\s$ is described by the two exact sequences:
\[\begin{array}{c}
1\to\kPG(S,\dS)_{\vec\s}\to\kPG(S,\dS)_\s\to\Sigma_{\s^\pm},\\
\\
1\to \hI_\s\to\kPG(S,\dS)_{\vec\s}\to\kPG(S_1,\dS_1)\times\ldots\times\kPG(S_k,\dS_k)\to 1.
\end{array}\]
\end{enumerate}
\end{theorem}

From the definition of the congruence topology and \cite[Corollary~4.12]{BF}, it then follows
a description of the centralizers of profinite multitwists in procongruence relative mapping class groups:

\begin{theorem}\label{normalizers multitwists}For $\s=\{\g_0,\ldots,\g_k\}$ a profinite multicurve on $S$ and $m\in\Z\ssm\{0\}$, we have:
\[Z_{\kG(S,\dS)}(\tau_{\g_0}^m\cdots\tau_{\g_k}^m)=N_{\kG(S,\dS)}(( \tau_{\g_0}^m\cdots\tau_{\g_k}^m)^\ZZ)
=N_{\kG(S,\dS)}(\langle \tau_{\g_0}^m,\ldots,\tau_{\g_k}^m\rangle)=\kG(S,\dS)_\s,\]
where $\kG(S,\dS)_\s$ is the stabilizer of $\s$ described in (i) of Theorem~\ref{relativestabilizers}. 
A similar result holds for the procongruence pure relative mapping class group $\kPG(S,\dS)$.
\end{theorem}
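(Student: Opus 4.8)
The plan is to deduce Theorem~\ref{normalizers multitwists} for relative mapping class groups from the corresponding statement for the (non-relative) procongruence mapping class groups, namely Corollary~4.11 in \cite{BFL}, by pulling everything back along the central extensions~(\ref{centralext}). Concretely, writing $T:=\prod_{i=1}^k\tau_{\d_i}^\ZZ$ for the central subgroup of $\kG(S,\dS)$, we have a short exact sequence $1\to T\to\kG(S,\dS)\to\kG(S)\to 1$, and a profinite multicurve $\s$ on $S$ together with its profinite multitwist $w:=\tau_{\g_0}^m\cdots\tau_{\g_k}^m$ lifts canonically to $\kG(S,\dS)$; since the boundary twists $\tau_{\d_i}$ are themselves (profinite) Dehn twists about the peripheral curves $\d_i$, the element $w$ commutes with all of $T$, so the three groups on the left-hand side of the displayed equalities all contain $T$. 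First I would check the easy chain of inclusions
\[
\kG(S,\dS)_\s\subseteq Z_{\kG(S,\dS)}(w)\subseteq N_{\kG(S,\dS)}(w^\ZZ)\subseteq N_{\kG(S,\dS)}(\langle\tau_{\g_0}^m,\dots,\tau_{\g_k}^m\rangle),
\]
where the first inclusion holds because an element of the stabilizer $\kG(S,\dS)_\s$ permutes the $\g_j$ and, by Theorem~\ref{relativestabilizers}, acts on $\hI_\s$ in a way that fixes $w$ (the $m$-th powers are permuted among themselves and $w$ is their symmetric product), and the remaining inclusions are formal.

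The substance is the reverse inclusion $N_{\kG(S,\dS)}(\langle\tau_{\g_0}^m,\dots,\tau_{\g_k}^m\rangle)\subseteq\kG(S,\dS)_\s$. Here I would take $f$ in the left-hand normalizer and look at its image $\bar f$ in $\kG(S)$. Because $T$ is central, the subgroup $\langle\tau_{\g_0}^m,\dots,\tau_{\g_k}^m\rangle$ maps isomorphically onto its image in $\kG(S)$ (the peripheral twists $\tau_{\d_i}$ are not involved, and by Theorem~\ref{stabilizers} the free abelian group $\hI_\s$ injects), so $\bar f$ normalizes $\langle\ol\tau_{\g_0}^m,\dots,\ol\tau_{\g_k}^m\rangle$ in $\kG(S)$. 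By Corollary~4.11 in \cite{BFL} this forces $\bar f\in\kG(S)_\s$. Now I would pull back: $\kG(S)_\s$ is exactly the image of $\kG(S,\dS)_\s$ under $\kG(S,\dS)\to\kG(S)$ — this is immediate from comparing the exact sequences of Theorem~\ref{relativestabilizers}(i) with those of Theorem~\ref{stabilizers}(i), since the kernel $T$ is contained in $\kG(S,\dS)_\s$ (the boundary twists obviously stabilize $\s$). Hence $f\in\kG(S,\dS)_\s\cdot T=\kG(S,\dS)_\s$, which closes the circle of inclusions and yields all four equalities at once. The pure case is handled identically, replacing $\kG$ by $\kPG$ throughout, using part (ii) of Theorems~\ref{stabilizers} and~\ref{relativestabilizers} and the pure version of Corollary~4.11 in \cite{BFL}.

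One technical point deserves care, and I expect it to be the main (if modest) obstacle: the appeal to Corollary~4.11 in \cite{BFL} is stated for procongruence mapping class groups of surfaces with empty boundary, so I must make sure the surface $S$ to which it is applied is genuinely of that type — which it is, since in the extensions~(\ref{centralext}) the quotient $\kG(S)$ is the procongruence mapping class group of the \emph{closed-up} surface (with punctures but no boundary), and the multicurve $\s$ lies in $C(S)\subset\kC(S)$ as required. A second point is to verify that the hypothesis $m\in\Z\ssm\{0\}$ is genuinely all that is needed: this is inherited directly from the cited corollary, which already handles arbitrary nonzero integer powers, so no separate argument about torsion or about the center of $\kG(S,\dS)$ (Proposition~\ref{congcent}) is required here. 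Finally, I would remark that the identity $N=Z$ for the cyclic group $w^\ZZ$ uses that $\kG(S,\dS)$ acts on $w^\ZZ\cong\ZZ$ through $\Aut(\ZZ)$, and any element normalizing $w^\ZZ$ and lying in $\kG(S,\dS)_\s$ in fact centralizes $w$ because, as noted above, the stabilizer can only permute the $m$-th power twists among themselves and thus fixes their product $w$; this is why all three normalizer/centralizer groups coincide rather than merely being commensurable.
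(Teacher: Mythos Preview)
Your approach is essentially the one the paper has in mind: the theorem is stated there without proof, merely as following ``from the definition of the congruence topology and Corollary~4.11 in \cite{BFL}'', i.e.\ exactly the pull-back along the extension~(\ref{centralext}) that you carry out. Two small corrections are in order, neither fatal.

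First, the kernel $T=\prod_{i=1}^k\tau_{\d_i}^\ZZ$ is \emph{not} central in $\kG(S,\dS)$ when $k\geq 2$: by (ii) of Proposition~\ref{congcent} only the diagonal $(\prod_i\tau_{\d_i})^\ZZ$ is central, and indeed elements of $\kG(S,\dS)$ may permute the boundary components and hence conjugate $\tau_{\d_i}$ to $\tau_{\d_j}$ (cf.\ the proof of Proposition~\ref{I-inn}). This does not affect your argument: all you actually use is that $T$ is normal, is contained in $\kG(S,\dS)_\s$, and commutes elementwise with the twists $\tau_{\g_i}$ (the $\g_i$ being disjoint from the $\d_j$), all of which hold.

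Second, the inclusion $N_{\kG(S,\dS)}(w^\ZZ)\subseteq N_{\kG(S,\dS)}(\langle\tau_{\g_0}^m,\dots,\tau_{\g_k}^m\rangle)$ in your chain is not formal: there is no a priori reason why normalizing the procyclic group generated by $w$ should force normalizing the larger rank-$(k+1)$ group. The fix is immediate: apply your push-down argument separately to $N_{\kG(S,\dS)}(w^\ZZ)$, since Corollary~4.11 in \cite{BFL} already gives $N_{\kG(S)}(\bar w^\ZZ)=\kG(S)_\s$ as well. The four-way equality then follows from the two easy containments $\kG(S,\dS)_\s\subseteq Z(w)\subseteq N(w^\ZZ)$ and $\kG(S,\dS)_\s\subseteq N(\langle\tau_{\g_i}^m\rangle)$ together with the two pull-backs.
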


\subsection{The Grothendieck-Teichm\"uller tower}\label{deftower}
Let $\eta\co S\hookra S'$ be a map in the category of surfaces $\cS$ defined in Section~\ref{surfaces}. 
Then, $\eta$ induces a homomorphism of pure relative mapping class groups $\eta_\ast\co\PG(S,\dS)\to\PG(S',\dS')$
and hence of profinite pure relative mapping class groups $\hat\eta_\ast\co\hPG(S,\dS)\to\hPG(S',\dS')$. 
From Remark~\ref{identify} and (ii) of Theorem~\ref{stabilizers}, it follows that $\hat\eta_\ast$ also induces a homomorphism of
procongruence pure relative mapping class groups $\check\eta_\ast\co\kPG(S,\dS)\to\kPG(S',\dS')$. 

Let $\cG$ be the category of profinite groups and continuous homomorphisms. We have then defined two functors $\hat\fT$ and $\check\fT$ 
from $\cS$ to $\cG$. It is not difficult to see that the functor $\check\fT$ factors through the functor $\hat\fT$.

There is a natural way to realize the above profinite mapping class groups as geometric \'etale fundamental groups of suitable geometric objects.
We will associate to a hyperbolic surface $S$ a logarithmic formal DM stack $\mathrm{P}\cM(S)$ (cf.\ \cite{Vaintrob} for a similar construction).

For a surface $S$ with empty boundary, let $\mathrm{P}\cM(S)$ be the DM stack of smooth curves whose complex model is diffeomorphic to $S$ 
with an order on the set of punctures and let $\mathrm{P}\ccM(S)$ be the DM compactification of $\mathrm{P}\cM(S)$. 
Let then $\mathrm{P}\ccM(S)^{\log}$ be the log DM stack with the logarithmic structure associated to the DM boundary.
There is a natural fully faithful functor from the category of DM stacks to the category of formal DM stacks. We then denote by 
$\mathrm{P}\ccM(S,\emptyset)^{\log}$ the formal log DM stack associated to the log DM stack $\mathrm{P}\ccM(S)^{\log}$ 
and assign this to the surface $S$. There is a series of isomorphisms:
\[\pi_1^\mathrm{et}(\mathrm{P}\ccM(S,\emptyset)^{\log}_{\ol\Q})\cong\pi_1^\mathrm{et}(\mathrm{P}\ccM(S)^{\log}_{\ol\Q})\cong
\pi_1^\mathrm{et}(\mathrm{P}\cM(S)_{\ol\Q})\cong\hPG(S).\]

For a surface $S$ with nonempty boundary, let $\tS\cong S_{g,n+2k}$ be as in Definition~\ref{tildedS}. 
The simplex $\s=\{\delta_1,\ldots,\delta_k\}\in C(\tS)$ determines a closed stratum $\delta_\s$ in the boundary of 
$\mathrm{P}\ccM(\tS)$ and we let $\mathrm{P}\cM(S,\dS)$ be the normalization of a formal neighborhood of 
$\delta_\s$ in $\mathrm{P}\ccM(\tS)$. 

Let us then assign to $S$ the formal logarithmic DM stack $\mathrm{P}\cM(S,\dS)^{\log}$ obtained by pulling back to the formal DM stack 
$\mathrm{P}\cM(S,\dS)$ the logarithmic structure of $\mathrm{P}\ccM(\tS,\emptyset)^{\log}$, via the natural morphism of formal DM stacks 
$\mathrm{P}\cM(S,\dS)\to \mathrm{P}\cM(\tS,\emptyset)$. There is an isomorphism: 
\[\pi_1^\mathrm{et}(\mathrm{P}\ccM(S,\dS)^{\log}_{\ol\Q})\cong\hPG(S,\dS).\]

From the canonical short exact sequence 
\[1\to\pi_1^\mathrm{et}(\mathrm{P}\ccM(S,\dS)^{\log}_{\ol\Q})\to\pi_1^\mathrm{et}(\mathrm{P}\ccM(S,\dS)^{\log}_{\Q})\to G_\Q\to 1,\]
we get a natural outer representation $\hat\rho_{S}\co G_\Q\to\Out(\hPG(S,\dS))$. A (nontrivial) consequence of Bely\v{\i} theorem is that
this representation is faithful.

The Grothendieck-Teichm\"uller functor $\hat\fT$ factors as the composition of the functor $\mathrm{P}\ccM^{\log}_\Q$, which assigns to the surface
$S$ the  log formal DM stack $\mathrm{P}\ccM(S,\dS)^{\log}_\Q$, and the functor which assigns to a log formal DM stack $X^{\log}$ over the rationals
its geometric \'etale fundamental group $\pi_1^\mathrm{et}(X_{\ol\Q}^{\log})$. 

Note that the identification of topological fundamental groups with mapping class groups takes care of base points. 
However, in order to make keeping track of base points totally irrelevant, it is convenient to modify our definition of the functors $\hat\fT$ and $\check\fT$.
Let $\cG^\mathrm{out}$ be the category with objects profinite groups and maps outer continuous homomorphisms. There is a natural functor 
$\cG\to\cG^\mathrm{out}$ and we define $\hat\fT^\mathrm{out}$ and $\check\fT^\mathrm{out}$ to be, respectively, the composition of 
$\hat\fT$ and $\check\fT$ with this functor.

\begin{definition}\label{GTtowers}
The full images in $\cG^\mathrm{out}$ of the functors $\hat\fT^\mathrm{out}$ and $\check\fT^\mathrm{out}$ are  called the 
\emph{(profinite) Grothendieck-Teichm\"uller tower} and the \emph{procongruence Grothendieck-Teichm\"uller tower}, respectively.
\end{definition} 

From the geometric description of the functor $\hat\fT^\mathrm{out}$, it follows that there is a natural faithful representation:
\[\rho_{\hat\fT}\co G_\Q\hookra\Aut(\hat\fT^\mathrm{out}),\]
defined by the assignment $\alpha\mapsto\{\hat\rho_{S}(\alpha)\}_{S\in\cS}$. At the heart of Grothendieck-Teichm\"uller theory,
is the following apocryphal conjecture by Grothendieck:

\begin{conjecture}[Grothendieck]\label{Grothendieck}The representation $\rho_{\hat\fT}$ induces a natural isomorphism: 
\[G_\Q\cong\Aut(\hat\fT^\mathrm{out}).\]
\end{conjecture}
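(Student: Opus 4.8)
The plan is to split Conjecture~\ref{Grothendieck} into two essentially independent assertions, to handle the one accessible by the methods of this paper, and to isolate the genuinely hard one. By construction the representation $\rho_{\hat\fT}$ factors through the Grothendieck--Teichm\"uller group: the genus $0$ theory, namely the isomorphism~(\ref{HSHMM}), identifies the Galois action on the groups $\hPG(S_{0,n})$ with the restriction, along Drinfel'd's embedding $\iota\co G_\Q\hookra\GT$, of the canonical $\GT$-action; compatibility with the morphisms of the tower upgrades this to a commutative triangle $\rho_{\hat\fT}=\hat\rho_{\GT}\circ\iota$, where $\hat\rho_{\GT}\co\GT\hookra\Aut(\hat\fT^\mathrm{out})$ is the faithful representation furnished by the general form of Theorem~B. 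Since $\iota$ and $\hat\rho_{\GT}$ are both injective, Conjecture~\ref{Grothendieck} is equivalent to the conjunction of (a) $\hat\rho_{\GT}$ is surjective, i.e.\ $\GT\cong\Aut(\hat\fT^\mathrm{out})$, and (b) $\iota$ is surjective, i.e.\ $\GT\cong G_\Q$.

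For step (a) I would mimic the proof of Theorem~C, the procongruence analogue $\GT\cong\Aut(\check\fT^\mathrm{out})$. That argument rests on the combinatorial theory of procongruence mapping class groups --- the procongruence curve complex $\kC(S)$, the stabilizer descriptions of Theorem~\ref{stabilizers}, and the centralizer computations of Theorem~\ref{normalizers multitwists} and Proposition~\ref{congcent} --- together with the rigidity maps $\mu_{S,S'}$ (obtained from Theorem~8.1 in \cite{BFL}), which reduce an inertia-preserving compatible system of outer automorphisms of the tower to its restriction in modular dimensions $1$ and $2$, hence to the genus $0$ situation governed by~(\ref{HSHMM}). Given $\phi\in\Aut(\hat\fT^\mathrm{out})$, one wants to extract from the genus $0$, two-dimensional piece an element of $\GT$ mapping to $\phi$, exactly as in the procongruence case. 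The single missing input is that $\hat\fT$ and $\check\fT$ are a priori distinct functors: $\check\fT$ factors through $\hat\fT$, but the comparison map $\hPG(S,\dS)\to\kPG(S,\dS)$ is an isomorphism only when the congruence subgroup problem has a positive answer, which is known only for $g(S)\leq 2$. Thus step (a) reduces either to a positive solution of the congruence subgroup problem in all genera or, short of that, to a direct proof that every $\phi\in\Aut(\hat\fT^\mathrm{out})$ preserves the congruence kernels $\ker(\hPG(S,\dS)\to\kPG(S,\dS))$ and so descends to $\Aut(\check\fT^\mathrm{out})$; the latter is plausible because these kernels admit intrinsic descriptions, but organizing it coherently across all the morphisms of the tower is delicate.

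Step (b), the identification $\GT\cong G_\Q$, is the Deligne--Drinfel'd conjecture and lies outside the reach of the present circle of ideas; it is the real obstacle. The techniques available here live on the mapping class group side of the picture, and the paper's main theorems already deliver the optimal output there: an \emph{isomorphism} $\GT\cong\Aut(\check\fT^\mathrm{out})$ and a \emph{faithful representation} $\GT\hookra\Aut(\hat\fT^\mathrm{out})$. Consequently, what this plan actually proves unconditionally is the chain $G_\Q\hookra\GT\hookra\Aut(\hat\fT^\mathrm{out})$ together with $\GT\cong\Aut(\check\fT^\mathrm{out})$, which reduces Conjecture~\ref{Grothendieck} to exactly the two open problems above --- the decisive difficulty being step (b), with step (a) remaining conditional on the congruence subgroup problem.
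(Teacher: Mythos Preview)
The statement is a \emph{conjecture}, not a theorem, and the paper does not prove it; it is explicitly stated as open (with only the weak I/OM variant attributed to Pop). Your proposal is therefore not really a proof attempt but an analysis, and as such it is correct and matches the paper's own discussion almost exactly. The paper itself records the same decomposition: Conjecture~\ref{Drinfeld} is your step~(a), Conjecture~\ref{GTconj} is your step~(b), and Theorem~\ref{GTaction} is precisely the unconditional statement $\GT\hookra\Aut(\hat\fT^\mathrm{out})$ that makes the chain $G_\Q\hookra\GT\hookra\Aut(\hat\fT^\mathrm{out})$ work and shows that Conjecture~\ref{Grothendieck} implies Conjecture~\ref{GTconj}.

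Two small remarks. First, the paper calls step~(b) the ``Grothendieck--Teichm\"uller conjecture'' rather than ``Deligne--Drinfel'd''; the latter name is not standard for this statement. Second, your suggestion for step~(a) --- that one might bypass the congruence subgroup problem by showing directly that any $\phi\in\Aut(\hat\fT^\mathrm{out})$ preserves the congruence kernels --- is reasonable in spirit, but note that the paper's proof of Theorem~\ref{GT=AutT} uses more than just the reduction maps $\mu_{S,S'}$: it also needs Lemma~\ref{trace} (automorphisms of the tower automatically satisfy the inertia conditions) and the argument ruling out the $\Sigma_n$ factor via the puncture-filling maps. A profinite version of the argument would need profinite analogues of the stabilizer and centralizer descriptions (Theorems~\ref{stabilizers} and~\ref{normalizers multitwists}), and these are exactly what is not known without the congruence subgroup property.
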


Somewhat in this spirit is the I/OM conjecture, proved by Pop (cf.\ \cite[Theorem]{Pop2}).

\subsection{The Galois action on the procongruence Grothendieck-Teichm\"uller tower}
For $S$ a hyperbolic surface with empty boundary, let
$\mathrm{P}\ccC(S)^{\log}\to \mathrm{P}\ccM(S)^{\log}$ be the universal log stable curve (this is the universal stable curve endowed 
with the logarithmic structure associated to its DM boundary). For $\bar\xi\in \mathrm{P}\ccM(S)^{\log}$ a $\ol\Q$-point, there is an 
associated universal monodromy representation:
\[\rho_{\bar\xi}\co\pi_1^\mathrm{et}(\mathrm{P}\ccM(S)^{\log}_{\ol\Q})\to\Out(\mathrm{P}\ccC(S)^{\log}_{\bar\xi}),\]
where $\mathrm{P}\ccC(S)^{\log}_{\bar\xi}$ is the fiber of the universal curve over $\bar\xi$ endowed with the 
logarithmic structure induced by $\mathrm{P}\ccC(S)^{\log}$.
The isomorphism $\pi_1^\mathrm{et}(\mathrm{P}\ccM(S)^{\log}_{\ol\Q})\cong\hPG(S)$ then induces an isomorphism $\Im\rho_{\bar\xi}\cong\kPG(S)$.

For $\psi\co S\hookra \tS$ the embedding considered in Remark~\ref{identify}, there is an induced homomorphism:
\[\hat\fT(\psi)\co\pi_1^\mathrm{et}(\mathrm{P}\ccM(S,\dS)^{\log}_{\ol\Q})\cong\hPG(S,\dS)\to
\pi_1^\mathrm{et}(\mathrm{P}\ccM(\tS,\emptyset)^{\log}_{\ol\Q})\cong\pi_1^\mathrm{et}(\mathrm{P}\ccM(\tS)^{\log}_{\ol\Q}).\]
We then have $\rho_{\bar\xi}\circ\hat\fT(\psi)(\pi_1^\mathrm{et}(\mathrm{P}\ccM(S,\dS)^{\log}_{\ol\Q}))\cong\kPG(S,\dS)$.

The above procedure defines natural transformations of functors: 
\[\rho\co\hat\fT\Ra\check\fT\hspace{1cm}\mbox{and}\hspace{1cm}\rho^\mathrm{out}\co\hat\fT^\mathrm{out}\Ra\check\fT^\mathrm{out}.\] 

Since the universal monodromy representation $\rho_{\bar\xi}$ is compatible with the Galois actions, the natural transformation $\rho^\mathrm{out}$
is $G_\Q$-equivariant and there is a natural representation (again, Bely\v{\i} theorem implies that it is faithful): 
\[\rho_{\check\fT}\co G_\Q\hookra\Aut(\check\fT^\mathrm{out}).\]

\subsection{Levels and level structures}\label{moduli}
Let $S$ be a hyperbolic surface with empty boundary.
A finite index subgroup $\G^\l$ of $\G(S)$ is called a \emph{level}. A \emph{congruence} level is a level which is open for the congruence
topology on $\G(S)$. The \'etale covering $\cM^\l(S)$ of $\cM(S)$ associated to the level $\G^\l$ is called a \emph{level structure}
over $\cM(S)$. The level is \emph{representable} if the associated DM stack $\cM^\l(S)$ is representable.

In particular, to the pure mapping class group $\PG(S)$ is associated the level structure $\mathrm{P}\cM(S)$. This is the \'etale Galois covering of $\cM(S)$,
with Galois group the symmetric group $\Sigma_n$, which parameterizes smooth projective curves with an order on the set of punctures. 
For $S=S_{g,n}$, in order to simplify notations, the stacks $\cM(S)$ and $\mathrm{P}\cM(S)$ are sometimes simply denoted by $\cM_{g,[n]}$ and 
$\cM_{g,n}$, respectively.

Let $\ccM(S)$ be the DM compactification of the moduli stack $\cM(S)$.
This is a DM stack defined over $\Z$ which parameterizes stable $n$-punctured, genus $g$ curves. 
A \emph{level structure over $\ccM(S)$} is the compactification $\ccM^\l(S)$ of a level structure $\cM^\l(S)$ over $\cM(S)$ obtained taking the 
normalization of the DM stack $\ccM(S)$ in the function field of $\cM^\l(S)$. This is also a DM stack and, if
the level $\G^\l$ is contained in an abelian level of order $\geq 3$, then $\ccM^\l(S)$ is representable. 

\subsection{The procongruence pants graph}\label{pantsgraphdef}
The \emph{Fulton curve} $\cF(S)$ is the $1$-dimensional 
closed substack of the moduli stack $\ccM(S)$ which parameterizes curves with at least $d-1$ singular points. More generally, for a level structure
$\ccM^\l(S)$ over $\ccM(S)$, the \emph{Fulton curve} $\cF^\l(S)$ is the inverse image of $\cF(S)$ via the natural morphism $\ccM^\l(S)\to\ccM(S)$.

Let $\G^\l$ be a representable level of $\G(S)$ contained in an abelian level of order $m\geq 2$. Then, each irreducible component of the Fulton curve 
$\cF^\l(S)_\Q$ is a covering of $\P^1_\Q$ ramified at most over the set $\{0,1,\infty\}$ and the ramification points are contained in the
boundary of $\cF^\l(S)_\Q$ (that is to say the points parameterizing maximally degenerated stable curves). It follows that the analytic complex variety
$\cF(S)_\C^{an}$ admits a natural triangulation with vertices the points parameterizing maximally degenerated stable curves and edges geodesic
segments of length $1$ joining these boundary points. We denote by $C^\l_P(S)$ the $1$-skeleton of this triangulation which we regard as a 
$1$-dimensional abstract simplicial complex.

\begin{definition}\label{pantsgraph}The \emph{procongruence pants graph} $\kC_P(S)$ is the $1$-dimensional abstract simplicial profinite complex obtained 
as the inverse limit of the finite abstract simplicial complexes $C^\l_P(S)$, for $\G^\l$ varying over the set of all congruence levels of $\G(S)$.
\end{definition}

For a more comprehensive treatment of $\kC_P(S)$, we refer to \cite{BF}. Here we only recall the basic properties which we will need later.
The vertices of $\kC_P(S)$ are in natural bijective correspondence with the profinite set $\kC(S)_{d(S)-1}$ of facets of the procongruence curve complex 
and its edges with the profinite set $\kC(S)_{d(S)-2}$ of $(d(S)-2)$-simplices. To the decomposition in irreducible components of the Fulton curves 
$\cF^\l(S)$, for all levels $\G^\l$, corresponds a decomposition of the procongruence pants graph $\kC_P(S)$ in closed subgraphs called 
\emph{profinite Farey subgraphs}. They are parameterized by the profinite set $\kC(S)_{d(S)-2}$. The profinite Farey subgraph associated 
to $\s\in\kC(S)_{d(S)-2}$ is denoted by $\hF_\s(S)$ and its set of vertices consists of the profinite set of facets of $\kC_P(S)$ containing $\s$.

\section{The $\I$-condition and the $\D$-condition on automorphisms}
\subsection{Decomposition and inertia groups}
Let $S$ be a hyperbolic surface. For an open subgroup $U$ of $\kG(S)$, we call the stabilizer $U_\s=U\cap\kG(S)_\s$ 
of a simplex $\s\in\kC(S)$ for the action of $U$ on $\kC(S)$ the \emph{decomposition group} of $U$ associated to $\s$. 

\begin{definition}\label{DecPres}The group of \emph{$\D$-automorphisms} $\Aut^\D(U)$ is the closed subgroup of $\Aut(U)$ consisting of 
automorphisms preserving the set of decomposition groups $\{U_\g\}_{\g\in\hL(S)_0}$.
\end{definition}

\begin{remark}\label{previous}In \cite{BF}, $\Aut^\D(U)$ was denoted by $\Aut^\ast(U)$. According to
 \cite[Proposition~7.2]{BF}, the group $\Aut^\D(U)$ preserve the set of all decomposition groups $\{U_\s\}_{\s\in\kC(S)}$ of $U$.
\end{remark} 

Let us introduce a slightly more restrictive condition, although in a more general context, which we will need later:

\begin{definition}\label{starcondition}\leavevmode\begin{enumerate}
\item For $\s\in\kC(S)$, we denote by $\hI_\s$ the closed abelian subgroup of $\kG(S)$ topologically generated by the Dehn twists $\tau_\g$, 
for $\g\in\s$. For $H$ a closed subgroup of $\kG(S)$, we then let $\hI_\s(H):=\hI_\s\cap H$ (the \emph{inertia group of $H$ associated to $\s$}). 
The \emph{topological type} of $\hI_\s$ is the topological type of $\s$.
\item The group of \emph{$\I$-automorphisms} $\Aut^\I(H)$ is the closed subgroup of $\Aut(H)$ consisting of those elements which preserve 
the set of inertia groups $\{\hI_\g(H)\}_{\g\in\hL(S)_0}$. 
\item We say that a subgroup $K$ of $H$ is $\I$-characteristic if it is preserved by all elements of $\Aut^\I(H)$. 
\end{enumerate}
\end{definition}

\begin{remarks}\label{algreal}Let us observe that:
\begin{enumerate}
\item By \cite[Proposition~6.8]{[B3]}, for every open subgroup $H$ of $\kG(S)$, the assignment $\s\mapsto\hI_\s(H)$ identifies 
the procongruence curve complex $\kC(S)$ with the simplicial profinite complex $\kC_{\cI(H)}(S)$ whose simplices are the inertia groups 
$\{I_\s(H)\}_{\s\in\kC(S)}$. In the sequel, we will identify all these complexes and denote them simply by $\kC(S)$. 

\item For $H$ a closed subgroup of $\kG(S)$, we have $\Inn H\subseteq\Aut^\I(H)$. In particular, an $\I$-characteristic 
subgroup of $H$ is normal. 
\end{enumerate}
\end{remarks}

\begin{proposition}\label{0simplices}We have:
\begin{enumerate}
\item An automorphism of $H$ is an $\I$-automorphism if and only if it preserves the set of all inertia groups 
$\{\hI_\s(H)\}_{\s\in\kC(S)}$ of $H$.
\item For $S\neq S_{1,2}$ and $H$ any open subgroup of $\kG(S)$, the action of $\Aut^\I(H)$ on the set $\{\hI_\s(H)\}_{\s\in\kC(S)}$ preserves topological types. 
\item For $H=\hG(S_{1,2})$ or $\hPG(S_{1,2})$, the action of $\Aut^\I(H)$ on the set $\{\hI_\s(H)\}_{\s\in\kC(S)}$ preserves topological types. 
\item $\Aut^\I(\kG(S))$ preserves the conjugacy classes of profinite braid twists in $\kG(S)$.
\end{enumerate}
\end{proposition}

\begin{proof}(i): One implication is obvious. For the other, let us observe that, 
for $\s=\{\g_0,\ldots,\g_k\}\in\kC(S)$, we have $\hI_\s(H)=\prod_{i=0}^k \hI_{\g_i}(H)$. Moreover, by \cite[Corollary~4.12]{BF}, 
a set of nonperipheral profinite simple closed curves $\{\g_0,\ldots,\g_k\}$ forms a $k$-simplex of $\kC(S)$ if and only if the associated
profinite Dehn twists or, equivalently, some nontrivial powers of them commute. The conclusion is then clear.
\smallskip

\noindent
(ii): Item (i) of Remarks~\ref{algreal} implies that, for every open subgroup $H$ of $\kG(S)$, 
there is a natural continuous homomorphism $\Aut^\I(H)\to\Aut(\kC(S))$. The conclusion then follows from \cite[Theorem~5.5]{BF} 
(this is also where the hypothesis $S\neq S_{1,2}$ is actually needed).
\smallskip

\noindent
(iii): Let us observe that an $\I$-automorphism of $\hG(S_{1,2})$ preserves the subgroup $\hPG(S_{1,2})$, since the latter is (topologically) generated
by Dehn twists. Both statements then follow from \cite[Lemma~2.19]{BF2}.
\smallskip

\noindent
(iv): For $n(S)$ or $k(S)\geq 2$ (otherwise the claim is trivial), let $b_\g\in\kG(S)$ be the profinite braid twist
associated to a simple closed curve $\g$ on $S$ bounding either a $2$-punctured disc or a disc containing two boundary components. 

By the previous item, for a given $f\in\Aut^\I(\kG(S))$, we can assume, after composing with an element of $\Inn\kG(S)$, that $f$ preserves the inertia group 
$\hI_\g=\tau_\g^\ZZ$. In particular, $f$ preserves the normalizer $N_{\kG(S)}(\hI_\g)$ of $\hI_\g$ in $\kG(S)$. 
From the description of this group (cf.\ \cite[Corollary~4.12, (i)]{BF}), it follows that,
for $(g(S),n(S)+k(S))\neq (0,4),(1,2)$, the center of $N_{\kG(S)}(\hI_\g)$ is generated by $b_\g$, which concludes the proof of item (ii) in this case.

For $(g(S),n(S)+k(S))= (0,4)$, a similar argument applies.
For $g(S)=1$ and either $n(S)=2$ or $k(S)=2$, the claim follows from (i) of Proposition~\ref{g=1n=2}.
\end{proof}

\begin{remarks}\label{lessrestrictive}Let $U$ be an open subgroup of $\kG(S)$:
\begin{enumerate}
\item By \cite[Corollary~4.12]{BF}, we have that $\Inn(U)\subseteq\Aut^\I(U)\subseteq\Aut^\D(U)$. 
We denote by $\Out^\I(U)$ (resp.\ $\Out^\D(U)$) the group of outer $\I$-automorphisms (resp.\ $\D$-automorphisms) of $U$.
\item There is a natural faithful Galois representation $\rho_\Q\co G_\Q\hookra\Out^\I(\kG(S))$, where $G_\Q$ is the absolute
Galois group of the rationals (cf.\ \cite[Corollary~7.6]{[B3]}).
\end{enumerate}
\end{remarks}

Except in the low genera cases, the $\D$-condition and the $\I$-condition are equivalent for full and pure procongruence mapping class groups. 
More precisely:

\begin{proposition}\label{comparison}We have:
\begin{enumerate}
\item For $d(S)>1$ and $S\neq S_{1,2}, S_1^2, S_2$, there holds: 
\[\Out^\I(\kG(S))=\Out^\D(\kG(S)).\]
\item For $S\neq S_{1,1}, S_1^1, S_2$, there holds:
\[\Out^\I(\kPG(S))=\Out^\D(\kPG(S)).\]
\item For $S=S_{1,1},S_1^1,S_{1,2},S_1^2$ or $S_2$, there is a natural isomorphism:
\[\Out^\D(\hG(S))\cong\Out^\I(\hG(S))\times\{\pm 1\}.\]
\end{enumerate}
\end{proposition}

\begin{proof}(i): for $d(S)>1$ and $S\neq S_{1,2}, S_1^2, S_2$, by \cite[Corollary~4.12]{BF}, we have $Z(\kG(S)_\g)=\hI_\g$ and 
$N_{\kG(S)}(\hI_\g)=\kG(S)_\g$, for all $\g\in\hL(S)_0$, and so the conclusion follows. 
\smallskip

\noindent
(ii): The same argument above applies to $\kPG(S)$ in the given cases.
\smallskip

\noindent
(iii): for $S=S_{1,1},S_1^1,S_{1,2},S_1^2$ or $S_2$, by \cite[Lemma~7.4]{BF}, there is an exact sequence:
\[1\to\Hom(\hG(S)/Z(\hG(S)),Z(\hG(S)))\to\Out(\hG(S))\to\Out(\hG(S)/Z(\hG(S))),\]
where the center $Z(\hG(S))$ is generated by the hyperelliptic involution $\iota$ and the abelianization of $\hG(S)/Z(\hG(S))$ is a cyclic group
of finite even order (cf.\ \cite[\S~5.1.3]{FM}). Therefore we have $\Hom(\hG(S)/Z(\hG(S)),Z(\hG(S)))\cong\{\pm 1\}$. Let us denote by $\phi_\iota$ its generator.

By (i) and (ii), we have that $\Out^\D(\hG(S)/Z(\hG(S)))=\Out^\I(\hG(S)/Z(\hG(S)))$. Therefore, if $f\in\Out^\D(\hG(S))$ is such that
$f\notin\Out^\I(\hG(S))$, we have $f\circ\phi_\iota\in\Out^\I(\hG(S))$. Since the intersection between the subgroups 
$\Out^\I(\hG(S))$ and $\Hom(\hG(S)/Z(\hG(S)),Z(\hG(S)))$ of $\Out(\hG(S))$ is trivial and $\phi_\iota$ normalizes $\Out^\I(\hG(S))$,
the conclusion follows.
\end{proof}

\subsection{The profinite Grothendieck-Teichm\"uller group}\label{GTdef}
The inertia condition of the previous sections plays a central role in Grothendieck-Teichm\"uller theory.
Let $\Aut^\flat(\hG_{0,n})$ be the subgroup of  of $\Aut(\hG_{0,n})$
consisting of those automorphisms which preserve the set of inertia groups $\{\hI_\g\}_{\g\in\hL^b(S_{0,n})}$ and let $\Out^\flat(\hG_{0,n})$
be the corresponding outer automorphism group.
The main theorem of Harbater and Schneps in \cite{HS} states that, for all $n\geq 5$, there is a natural isomorphism:
\begin{equation}\label{HS}
\Out^\flat(\hG_{0,n})\cong\Sigma_n\times\GT,
\end{equation} 
where $\GT$ is the profinite Grothendieck-Teichm\"uller group introduced 
by Drinfeld (cf.\ \cite{Drinfeld}) and the symmetric group $\Sigma_n$ identifies with $\Inn(\hG_{0,[n]})/\Inn(\hG_{0,n})$.
The key case of this isomorphism is for $n=5$ (cf.\ \cite[Theorem~4]{HS}). This also allows to give a more intrinsic definition of the
Grothendieck-Teichm\"uller group letting (note that, for $n=5$, the $\flat$ and $\I$-conditions are equivalent):
\[\GT:= Z_{\Out^\I(\hG_{0,5})}(\Sigma_5).\]

This characterization was further improved by Hoshi, Minamide and Mochizuki, who proved that 
$\Aut^\flat(\hG_{0,n})=\Aut(\hG_{0,n})$ (cf.\ \cite[Corollary~2.8]{HMM}).
Hence, for all $n\geq 5$, there is a natural isomorphism $\Out(\hG_{0,n})\cong\Sigma_n\times\GT$ and, in particular, we have,
even more intrinsically, that:
\[\GT= Z_{\Out(\hG_{0,5})}(\Sigma_5).\]

Since $\hG_{0,n}$ is a characteristic subgroup of $\hG_{0,[n]}$ (cf.\ \cite[Proposition~4.1, (ii)]{MN}), restriction of automorphisms 
induces a homomorphism $\Aut(\hG_{0,[n]})\to\Aut(\hG_{0,n})$. We can then reformulate the above isomorphisms in the following way:

\begin{proposition}\label{purevsfullgenus0}For $n\geq 5$, restriction of automorphisms induces an isomorphism $\Aut(\hG_{0,[n]})\cong\Aut(\hG_{0,n})$. 
In particular,  for $n\geq 5$, there is a natural isomorphism:
\begin{equation}\label{GT}
\Out(\hG_{0,[n]})\cong\GT.
\end{equation}
\end{proposition}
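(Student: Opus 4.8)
The plan is to deduce Proposition~\ref{purevsfullgenus0} from the Harbater--Schneps isomorphism~(\ref{HS}), the Hoshi--Minamide--Mochizuki equality $\Aut^\flat(\hG_{0,n})=\Aut(\hG_{0,n})$ (which together give $\Out(\hG_{0,n})\cong\Sigma_n\times\GT$ with $\Sigma_n$ a \emph{direct factor}), and the classical fact that $\hG_{0,n}$ is characteristic in $\hG_{0,[n]}$, by running the automorphism theory of the extension
\[1\to\hG_{0,n}\to\hG_{0,[n]}\to\Sigma_n\to 1.\]
First I would record the two standing facts: for $n\geq 4$ one has $Z(\hG_{0,n})=\{1\}$, and, since $\hG_{0,n}$ is topologically finitely generated (hence strongly complete), $\Aut(\hG_{0,n})$ and $\Aut(\hG_{0,[n]})$ are profinite groups and the restriction homomorphism $r\co\Aut(\hG_{0,[n]})\to\Aut(\hG_{0,n})$ (well defined because $\hG_{0,n}$ is characteristic) is continuous. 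Because $Z(\hG_{0,n})=\{1\}$, the (continuous) cohomology $H^i(\Sigma_n,Z(\hG_{0,n}))$ vanishes for $i\geq 1$, so the Wells exact sequence of the above extension degenerates to a natural isomorphism
\[\Aut(\hG_{0,[n]})\;\xrightarrow{\ \sim\ }\;\mathrm{Compat}(\hG_{0,n},\Sigma_n),\qquad \phi\longmapsto(\phi|_{\hG_{0,n}},\bar\phi),\]
where $\mathrm{Compat}(\hG_{0,n},\Sigma_n)$ is the closed subgroup of pairs $(\psi,\bar\psi)\in\Aut(\hG_{0,n})\times\Aut(\Sigma_n)$ with $[\psi]\,\omega(q)\,[\psi]^{-1}=\omega(\bar\psi(q))$ for all $q\in\Sigma_n$; here $\omega\co\Sigma_n\to\Out(\hG_{0,n})$ is the outer action of the extension and $[\psi]$ the class of $\psi$ in $\Out(\hG_{0,n})$.

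Next I would identify $\omega$. By definition it sends $q$ to the outer class of conjugation by any lift of $q$, so its image is exactly $\Inn(\hG_{0,[n]})/\Inn(\hG_{0,n})\subseteq\Out(\hG_{0,n})$, which by~(\ref{HS}) (now with $\Out^\flat=\Out$) is the direct factor $\Sigma_n\times\{1\}$ of $\Out(\hG_{0,n})\cong\Sigma_n\times\GT$; in particular $\omega$ is injective with normal image. Consequently the first projection $\mathrm{Compat}(\hG_{0,n},\Sigma_n)\to\Aut(\hG_{0,n})$, $(\psi,\bar\psi)\mapsto\psi$, is a continuous bijection: it is injective because $\omega$ is injective, so $\bar\psi$ is determined by $\psi$; and it is surjective because, $\omega(\Sigma_n)$ being normal, for any $\psi$ the rule $\bar\psi:=\omega^{-1}\bigl([\psi]\,\omega(\cdot)\,[\psi]^{-1}\bigr)$ defines an automorphism of $\Sigma_n$ making $(\psi,\bar\psi)$ compatible. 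Composing, $r=\bigl(\text{first projection}\bigr)\circ\bigl(\text{Wells isomorphism}\bigr)$ is the asserted natural isomorphism $\Aut(\hG_{0,[n]})\cong\Aut(\hG_{0,n})$.

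Finally, for the outer statement I would pass to quotients by inner automorphisms. Since $r$ is injective, $r(\Inn(\hG_{0,n}))=\Inn(\hG_{0,n})$ and $r(\Inn(\hG_{0,[n]}))/\Inn(\hG_{0,n})\cong\Inn(\hG_{0,[n]})/\Inn(\hG_{0,n})$ maps isomorphically onto the direct factor $\Sigma_n\times\{1\}$ inside $\Out(\hG_{0,n})$, so $r(\Inn(\hG_{0,[n]}))$ is precisely the preimage of $\Sigma_n\times\{1\}$ under $\Aut(\hG_{0,n})\to\Out(\hG_{0,n})$. Hence
\[\Out(\hG_{0,[n]})=\Aut(\hG_{0,[n]})/\Inn(\hG_{0,[n]})\cong\Aut(\hG_{0,n})/r(\Inn(\hG_{0,[n]}))\cong\Out(\hG_{0,n})/(\Sigma_n\times\{1\})\cong\GT,\]
which is~(\ref{GT}). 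The step I expect to require the most care is the profinite bookkeeping around the Wells sequence --- checking that $\mathrm{Compat}(\hG_{0,n},\Sigma_n)$ is closed and that the comparison map is a \emph{topological} isomorphism (both following from strong completeness of $\hG_{0,n}$) --- together with the precise identification of the outer action $\omega$ with the direct-factor inclusion $\Sigma_n\hookrightarrow\Out(\hG_{0,n})$ supplied by~(\ref{HS}); everything else in the argument is formal.
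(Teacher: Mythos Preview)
Your proof is correct and follows essentially the same route as the paper: both arguments pass through the extension $1\to\hG_{0,n}\to\hG_{0,[n]}\to\Sigma_n\to 1$, invoke the profinite Wells isomorphism $\Aut(\hG_{0,[n]})\cong\mathrm{Comp}(\Sigma_n,\hG_{0,n})$ (valid because $Z(\hG_{0,n})=\{1\}$ and the outer action is faithful), and then use the Harbater--Schneps/Hoshi--Minamide--Mochizuki direct-product decomposition $\Out(\hG_{0,n})\cong\Sigma_n\times\GT$ to see that $\rho(\Sigma_n)$ is normal, so every $\psi\in\Aut(\hG_{0,n})$ lies in a compatible pair. The only cosmetic difference is that the paper first establishes injectivity of the restriction map separately (via Theorem~7.3 and Lemma~3.3 of \cite{BFL}) before turning to Wells for surjectivity, whereas you extract both directions at once from the Wells isomorphism; your packaging is slightly more economical but the content is the same.
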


\begin{proof}By \cite[Theorem~4.14, (i)]{BF}, for $n\geq 5$, the centralizer of $\hG_{0,n}$ in $\hG_{0,[n]}$ is trivial, so that 
the natural homomorphism $\Inn(\hG_{0,[n]})\to\Aut(\hG_{0,n})$ is injective. By \cite[Lemma~3.3]{BF}, 
the homomorphism $\Aut(\hG_{0,[n]})\to\Aut(\hG_{0,n})$ is then also injective.

Let us consider the short exact sequence:
\[1\to\hG_{0,n}\to\hG_{0,[n]}\to\Sigma_n\to 1.\]
In order to prove the proposition, we have to show that, for $n\geq 5$, every automorphism of $\hG_{0,n}$ extends to $\hG_{0,[n]}$.
For this, we will need \cite[Lemma~2.18]{BF2}, which is based on Wells' exact sequence (cf.\ \cite[Theorem]{Wells}).

Since the outer representation $\rho\co\Sigma_n\to\Out(\hG_{0,n})$ associated to the above short exact sequence is faithful 
and the center of $\hG_{0,n}$ is trivial, the conditions of \cite[Lemma~2.18, (i) and (iii)]{BF2} are satisfied and we only need to check 
that condition (ii) is also satisfied. But this follows from the fact that $\rho(\Sigma_n)$ is a normal subgroup of $\Out(\hG_{0,n})$, which 
is an immediate consequence of the fact that, as explained above, for $n\geq 5$, there is a natural isomorphism
$\Out(\hG_{0,n})\cong\Sigma_n\times\GT$.
\end{proof}

The following result is essentially due to Minamide and Nakamura (cf.\ \cite{MN}):

\begin{proposition}\label{g=1n=2}
For $g(S)=1$ and, either $n(S)=2$ and $k(S)=0$, or $n(S)=0$ and $k(S)=2$, we have:
\begin{enumerate}
\item $\Out^\I(\hG(S))=\Out^\I(\hPG(S))=\Out(\hPG(S))\cong\GT$.
\item $\Out(\hG(S))\cong\{\pm 1\}\times\GT$.
\end{enumerate}
\end{proposition}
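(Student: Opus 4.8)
The plan is to derive Proposition~\ref{g=1n=2} from the known structure of genus~$0$ mapping class groups, exploiting the hyperelliptic correspondence that links $S_{1,2}$ (and $S_1^2$) to $S_{0,5}$ (resp.\ $S_0^5$). The starting point is the classical fact that the hyperelliptic involution $\iota$ is central in $\G(S_{1,2})$, and that the quotient $\G(S_{1,2})/\langle\iota\rangle$ is isomorphic to a finite-index subgroup of $\G_{0,[5]}$ (this is the content of the Minamide--Nakamura analysis, cf.\ \cite{MN}); passing to profinite completions and using that taking profinite completion is exact on the relevant central extension, one gets $\hG(S_{1,2})/Z(\hG(S_{1,2}))\cong\hG_{0,[5]}$, and similarly $\hPG(S_{1,2})\cong\hG_{0,5}$ after accounting for the symmetric-group quotients. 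The case $S_1^2$ is handled the same way via the central extension $1\to\tau_{\d_1}^\ZZ\times\tau_{\d_2}^\ZZ\to\hG(S_1^2,\dd)\to\hG(S_{1,2})\to1$ together with Proposition~\ref{congcent}.

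For item~(i), I would first establish $\Out(\hPG(S))\cong\GT$. Since $\hPG(S_{1,2})\cong\hG_{0,5}$ by the hyperelliptic identification, Proposition~\ref{purevsfullgenus0} (or directly the Harbater--Schneps/Hoshi--Minamide--Mochizuki isomorphism~(\ref{HS})) gives $\Out(\hG_{0,5})\cong\Sigma_5\times\GT$; the point is then that under the identification $\hPG(S_{1,2})\cong\hG_{0,5}$ the outer representation of the symmetric group coming from punctures of $S_{1,2}$ only sees a $\Sigma_2$ (the two punctures of $S_{1,2}$) rather than the full $\Sigma_5$, and one checks that this $\Sigma_2$ together with the residual combinatorics forces the relevant automorphism group to be exactly the centralizer $Z_{\Out(\hG_{0,5})}(\Sigma_5)=\GT$. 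Concretely, I would argue that an automorphism of $\hPG(S_{1,2})$ extends to $\hG(S_{1,2})$ precisely because the two punctures are swappable by a mapping class, show that $\Out(\hPG(S_{1,2}))$ injects into $\Out(\hG_{0,5})$ with image the centralizer of $\Sigma_5$, and conclude $\Out(\hPG(S))\cong\GT$. The equality $\Out^\I(\hG(S))=\Out^\I(\hPG(S))=\Out(\hPG(S))$ then follows: the $\I$-condition is automatic here because every automorphism of $\hG_{0,5}$ already preserves inertia (Corollary~2.8 in \cite{HMM}), and the inclusion $\Out^\I(\hG(S))\hookrightarrow\Out^\I(\hPG(S))$ combined with the fact that $\hPG(S)$ is characteristic of index~$2$ in $\hG(S)$ gives the chain of equalities once one checks surjectivity via extendability of automorphisms across the $\Sigma_2$-quotient.

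For item~(ii), the extra factor $\{\pm1\}$ is exactly $\Hom(\hG(S),Z(\hG(S)))\cong\{\pm1\}$ coming from the central hyperelliptic involution, via the mechanism already used in the proof of (iii) of Proposition~\ref{comparison}: the exact sequence $1\to\Hom(\hG(S),Z(\hG(S)))\to\Out(\hG(S))\to\Out(\hG(S)/Z(\hG(S)))$ together with the observation that $\hG(S)/Z(\hG(S))$ is (commensurable with) $\hG_{0,[5]}$, whose outer automorphism group is $\GT$ by Proposition~\ref{purevsfullgenus0}. One shows the sequence is short exact on the right (every automorphism of the quotient lifts, again using that the center is generated by the hyperelliptic involution and Wells-type obstruction vanishes), that the generator $\phi_\iota$ of $\Hom(\hG(S),Z(\hG(S)))$ centralizes the $\GT$-part, and that the two subgroups intersect trivially, yielding the direct product $\Out(\hG(S))\cong\{\pm1\}\times\GT$.

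The main obstacle I anticipate is not any single deep input but the bookkeeping around the hyperelliptic correspondence at the profinite level: one must be careful that the central extension by $\langle\iota\rangle$ remains exact after profinite completion (which needs the subgroup to be closed and the quotient's completion to behave well, ultimately relying on goodness/strong completeness facts for mapping class groups that are available in genus~$\le 2$), and that the identification $\hG(S_{1,2})/\langle\iota\rangle\cong\hG_{0,[5]}$ is compatible with inertia and with the symmetric-group actions on punctures in a way that lets the $\GT$-characterization transfer. For $S_1^2$ the additional subtlety is tracking the boundary twists $\tau_{\d_i}$ and verifying, via Proposition~\ref{congcent} and Proposition~\ref{I-inn}, that quotienting by the normal inertia group $\tau_{\d_1}^\ZZ\times\tau_{\d_2}^\ZZ$ does not introduce or destroy outer automorphisms, reducing cleanly to the $S_{1,2}$ case. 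Once these compatibilities are in place, the numerology ($\GT$ versus $\Sigma_5\times\GT$, and the $\{\pm1\}$ from the center) is forced.
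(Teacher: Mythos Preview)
Your proposal has a genuine gap, and also overcomplicates a step that in the paper is immediate.

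First, the reduction for $S_1^2$: the proposition concerns $\hG(S)$ and $\hPG(S)$, not the relative groups $\hG(S,\dS)$. Since $\G(S_1^2)=\G(S_{1,2})$ and $\PG(S_1^2)=\PG(S_{1,2})$ (the inclusion $\ring{S}\hookra S$ induces these identifications, cf.\ the beginning of Subsection~\ref{MCGboundary}), there is nothing to do for $S_1^2$: no boundary twists enter, and the whole discussion of quotienting by $\tau_{\d_1}^\ZZ\times\tau_{\d_2}^\ZZ$ is beside the point.

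Second, and more seriously, the claimed isomorphisms $\hG(S_{1,2})/Z(\hG(S_{1,2}))\cong\hG_{0,[5]}$ and $\hPG(S_{1,2})\cong\hG_{0,5}$ are not established, and the second one in particular is problematic as stated. If it held literally, then by~(\ref{HS}) one would get $\Out(\hPG(S_{1,2}))\cong\Sigma_5\times\GT$, directly contradicting item~(i). Your attempt to argue that one ``only sees'' the centralizer of $\Sigma_5$ is not an argument about $\Out(\hPG(S_{1,2}))$ itself but about some compatibility with an overgroup, and it does not pin down why the $\Sigma_5$ factor should disappear from the automorphism group of $\hPG(S_{1,2})$ as an abstract profinite group.

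The paper's route avoids this entirely. It uses the elementary fact that the hyperelliptic involution $\iota$ is central in $\hG(S_{1,2})$ \emph{and} swaps the two punctures, so that $\hG(S_{1,2})=\langle\iota\rangle\times\hPG(S_{1,2})$ as a direct product. This gives $\hG(S)/Z(\hG(S))\cong\hPG(S)$ immediately, with no hyperelliptic quotient to a genus~$0$ surface needed. The identification of $\Out(\hPG(S))$ with $\GT$ (and the fact that the $\I$-condition is automatic for $\hPG(S)$) is then taken from Corollary~C of \cite{MN} as a black box; the remaining work is just the exact sequence $1\to\Hom(\hG(S),Z(\hG(S)))\to\Out(\hG(S))\to\Out(\hPG(S))$ and the observation that the $\{\pm 1\}$ coming from $\Hom(\hG(S),\langle\iota\rangle)$ intersects $\Out^\I(\hG(S))$ trivially. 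Your treatment of item~(ii) is close to this, but for item~(i) you should drop the unproven genus~$0$ identification and instead invoke \cite{MN} directly for $\Out(\hPG(S_{1,2}))\cong\GT$, then use the product decomposition $\hG(S)=\langle\iota\rangle\times\hPG(S)$ to transfer to $\hG(S)$.
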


\begin{proof}Since $\G(S_1^2)=\G(S_{1,2})$ and $\PG(S_1^2)=\PG(S_{1,2})$, it is not restrictive to assume that $k(S)=0$ and $n(S)=2$.
By \cite[Corollary~C]{MN}, we know that $\Out(\hPG(S))\cong\GT$ which, in particular, implies that $\Out(\hPG(S))=\Out^\I(\hPG(S))$.
It is then enough to prove that:
\[\Out^\I(\hG(S))=\Out^\I(\hPG(S))\hspace{0.5cm}\mbox{ and }\hspace{0.5cm}\Out(\hG(S))\cong\{\pm 1\}\times\Out(\hPG(S)).\] 
The center $Z(\hG(S))$ is generated by the hyperelliptic involution $\iota$ and $\hG(S)=\langle\iota\rangle\times\hPG(S)$ 
and so $\hG(S)/Z(\hG(S))\cong\hPG(S)$. \cite[Lemma~7.4]{BF} then implies that there is a split short exact sequence:
\begin{equation}\label{lemmabfl}
1\to\Hom(\hG(S)/Z(\hG(S))),Z(\hG(S)))\to\Out(\hG(S))\to\Out(\hPG(S))\to 1.
\end{equation}
Since $\Hom(\hG(S)/Z(\hG(S)),Z(\hG(S)))\cong\{\pm 1\}$, it follows $\Out(\hG(S))\cong\{\pm 1\}\times\Out(\hPG(S))$ 
as claimed above. By the description of the image of the group $\Hom(\hG(S)/Z(\hG(S)),Z(\hG(S)))$ 
in $\Out(\hG(S))$, this has trivial intersection with $\Out^\I(\hG(S))$. The other claim above then also follows.
\end{proof}

\subsection{Nonseparating inertia groups}\label{nonseparatinginertia}
Let $C_0(S)$ be the \emph{nonseperating curve complex}. It is the subcomplex of the curve complex $C(S)$ consisting 
of the simplices $\s\in C(S)$ such that $S\ssm\s$ is connected. The \emph{procongruence nonseperating curve complex} $\kC_0(S)$ 
is the abstract simplicial profinite complex whose set of $k$-simplices is the closure of $C_0(S)_k$ in $\kC(S)_k$, for $k\geq 0$. 
In particular, a $0$-simplex of $\kC_0(S)$ is determined by a profinite simple closed curve of nonseparating topological type. 
We denote by $\hL^\mathrm{ns}(S)$ the closed subset of $\hL(S)$ consisting of such curves.
The topological type of a simplex $\s\in\kC_0(S)$ is determined by its dimension. 
Note that, for $g(S)=0$, we have $C_0(S)=\kC_0(S)=\emptyset$. 

\begin{definition}\label{Ins}For $H$ a closed subgroup of the procongruence mapping class group $\kG(S)$, 
the group of \emph{$\I_0$-automorphisms} $\Aut^{\I_0}(H)$ is the closed subgroup of $\Aut(H)$ consisting of those elements which preserve 
the set of inertia groups $\{\hI_\g(H)\}_{\g\in\hL^\mathrm{ns}(S)}$. We say that a subgroup $K$ of $H$ is $\I_0$-characteristic if it is preserved 
by all elements of $\Aut^{\I_0}(H)$. Note that, for $g(S)=0$, by definition, there holds $\Aut^{\I_0}(H)=\Aut(H)$.
\end{definition}

\begin{remark}\label{I_0rem}Let us assume that $H$ is an open subgroup of $\kG(S)$. Then,
the same argument of (i) of Proposition~\ref{0simplices} shows that an element $f\in\Aut(H)$ is an $\I_0$-automorphism 
if and only if $f$ preserves the set of inertia groups $\{I_\s(H)\}_{\s\in\kC_0(S)}$ of $H$. Moreover, from (ii) and (iii) of Proposition~\ref{0simplices},
it follows that, for either $S\neq S_{1,2}$ or $H=\kG(S_{1,2}),\kPG(S_{1,2})$, there holds $\Aut^{\I}(H)\leq \Aut^{\I_0}(H)$.
\end{remark} 

In fact, for pure procongruence mapping class groups, the $\I$-condition turns out to be no more restrictive than the $\I_0$-condition:

\begin{theorem}\label{IvsIns}For $d(S)>1$, we have $\Aut^\I(\kPG(S))=\Aut^{\I_0}(\kPG(S))$.
\end{theorem}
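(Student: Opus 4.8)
The statement to be proved is that for $d(S)>1$, every $\I_0$-automorphism of $\kPG(S)$ is automatically an $\I$-automorphism, i.e.\ it preserves not only the inertia groups of nonseparating profinite simple closed curves but also those of separating ones. The plan is to show that each separating-curve inertia group $\hI_\g(\kPG(S))$, for $\g\in\hL(S)_0$ of separating topological type, can be reconstructed group-theoretically from nonseparating inertia data that an $\I_0$-automorphism already respects. The natural bridge is the decomposition-group description of Theorem~\ref{stabilizers}(ii): the stabilizer $\kPG(S)_\s$ of a profinite multicurve $\s$ is an iterated extension of pure procongruence mapping class groups of the pieces of $S\ssm\s$ by the profinite abelian group $\hI_\s(\PG(S))$, and by Proposition~\ref{0simplices} together with Corollary~4.11 in \cite{BFL} the combinatorics of which sets of profinite Dehn twists commute is encoded entirely in the inertia groups.

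First I would reduce a single separating curve $\g$ to a configuration involving nonseparating curves only. Since $d(S)>1$, any separating simple closed curve $\g$ on $S$ can be completed: there is a profinite multicurve $\s$ containing $\g$ together with nonseparating curves so that each complementary piece of $S\ssm\s$ has small modular dimension, and — crucially — $\g$ itself can be exhibited inside a four-holed sphere or one-holed torus subsurface filled out by nonseparating curves. Concretely, on one side of $\g$ pick a nonseparating curve $\a$ and on the other a nonseparating curve $\b$ with $\a\cup\g\cup\b$ (or an appropriate lantern/Farey configuration) a multicurve; the point is that $\tau_\g$ is then expressible via a lantern relation or via the center of the stabilizer of a nonseparating sub-multicurve. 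More precisely, using Theorem~\ref{stabilizers}(ii), $\hI_\g(\kPG(S))$ is the center of $\kPG(S)_{\vec\s'}$ modulo (or intersected with) the inertia of the nonseparating part of $\s'$, where $\s'$ is a maximal multicurve through $\g$ all of whose \emph{other} curves are nonseparating — and such an $\s'$ exists precisely because the pieces cut off by a multicurve can be taken to have $d\le 1$, so any separating curve in a pants decomposition is surrounded by nonseparating ones once $d(S)>1$.

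The key step is then: an $\I_0$-automorphism $f$ preserves the inertia groups of all nonseparating curves, hence by Proposition~\ref{0simplices} (in its $\I_0$-form, Remark~\ref{I_0rem}) it preserves the inertia group $\hI_{\s''}(\kPG(S))$ of the nonseparating sub-multicurve $\s''=\s'\ssm\{\g\}$, and therefore it preserves its normalizer. By Theorem~\ref{normalizers multitwists} (applied to $\kPG(S)$, or rather its analogue for surfaces without boundary, Corollary~4.11 in \cite{BFL}) this normalizer is the stabilizer $\kPG(S)_{\s''}$, so $f$ preserves $\kPG(S)_{\s''}$. Now inside $\kPG(S)_{\s''}$ the curve $\g$ lives in a single complementary piece $S_j$ with $d(S_j)\le 2$, and there $\g$ is either the unique separating curve (handled by the explicit low-genus structure, e.g.\ $S_{0,4}$ or $S_{1,1}$ where there is essentially nothing separating and nonseparating to distinguish, or where the center/hyperelliptic structure pins it down) or $\tau_\g$ is recovered from a lantern relation among Dehn twists about nonseparating curves of that piece. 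Since $f$ respects the inertia of those nonseparating curves and respects the subgroup $\kPG(S)_{\s''}$, it must send $\hI_\g(\kPG(S))$ to the inertia group of another profinite simple closed curve of the same (separating) topological type. Running this over all separating topological types gives $f\in\Aut^\I(\kPG(S))$; the reverse inclusion $\Aut^\I\subseteq\Aut^{\I_0}$ is immediate from the definitions since $\hL^{\mathrm{ns}}(S)\subseteq\hL(S)_0$.

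The main obstacle I anticipate is the base case of the reduction: verifying that in every complementary piece $S_j$ with $d(S_j)\le 2$ a separating-curve inertia group is genuinely forced by nonseparating data. For $S_j=S_{0,5}$, $S_{1,2}$, $S_{2,0}$ this is where one must invoke the genus-$0$ structure (Proposition~\ref{purevsfullgenus0}) and the hyperelliptic reductions, and one must be careful that the lantern relation, when pushed to the procongruence completion, still exhibits $\tau_\g$ as a product of nonseparating profinite Dehn twists whose inertia $f$ controls — this uses the combinatorial theory of \cite{[B3]} and \cite{congtop} that the relations survive profinitely. A secondary subtlety is the case $d(S_j)=1$, $S_j=S_{0,4}$ or $S_{1,1}$: here every essential curve is nonseparating or the distinction collapses, so one must check the bookkeeping of which curves of $S$ become separating-vs-nonseparating after gluing the piece back, but this is purely topological. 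Once these finitely many local models are dispatched, the global statement follows by the normalizer/stabilizer argument above with no further analytic input.
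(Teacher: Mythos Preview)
Your reduction strategy has a genuine gap. The claim that a separating curve $\g$ can be completed to a maximal multicurve $\s'$ whose \emph{other} curves are all nonseparating (``any separating curve in a pants decomposition is surrounded by nonseparating ones once $d(S)>1$'') is false. If $\g$ bounds a genus-$0$ subsurface of $S$ containing at least four punctures --- e.g.\ the curve on $S_{1,5}$ cutting off an $S_{0,4}$ --- then every essential simple closed curve on that side of $\g$ is separating in $S$, so no nonseparating system $\s''$ disjoint from $\g$ can make the complementary piece containing $\g$ have $d\le 2$. For the same reason, the proposed lantern-relation step cannot work there: a genus-$0$ piece carries no nonseparating curves at all, so $\tau_\g$ is not a word in nonseparating Dehn twists supported on that piece.

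There is a second, subtler gap even if you cut along a nonseparating $\alpha$ and invoke the genus-$0$ result on the quotient. Knowing that the induced $\bar f$ on $\kPG(S)_{\vec\alpha}/\hat I_\alpha$ sends $\bar I_\g$ to some $\hat I_{\bar\g'}$ only gives $f(\tau_\g)=\tau_{\g'}^{h}\tau_\alpha^{k}$; you must still show $k=0$, and nothing in your outline addresses this lifting ambiguity. The paper handles both issues by a different architecture: it inducts on the genus, proving the genus-$1$ base case with an ad hoc argument (the kernel of $\hat\Gamma_{1,n}\to\hat\Gamma_{1,1}$ is topologically normally generated by nonseparating bounding-pair twists, which forces $k=0$), and for $g\ge 2$ it does \emph{not} try to control single separating inertia groups directly. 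Instead it observes that every simplex of dimension $d(S)-1$ or $d(S)-2$ contains a nonseparating curve, so (after passing to the quotient by that curve and using the inductive hypothesis) an $\I_0$-automorphism preserves the inertia groups of those top two layers of $\check C(S)$; the lifting ambiguity disappears because $\hat I_\alpha$ is already contained in the inertia group being moved. A dual-graph lemma (Lemma~3.9 of \cite{BFL}) then propagates this to all simplices. Your local ``cut down to small pieces'' picture would need to be replaced by this global top-dimension-first argument, or by an explicit mechanism to kill the $\tau_\alpha^{k}$ defect.
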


\begin{proof}We proceed by induction on the genus of $S$. The base for the induction will be provided by the cases $g(S)=0,1$, which we will
treat separately in the following lemmas. 

For $g(S)=0$, we have the following result which improves \cite[Corollary~2.8]{HMM}:  

\begin{lemma}\label{decpreservation}For $n\geq 5$, we have $\Aut^\I(\hG_{0,n})=\Aut(\hG_{0,n})\cong\Aut(\hG_{0,[n]})=\Aut^\I(\hG_{0,[n]})$.
\end{lemma}

\begin{proof}The first identity is \cite[Theorem~6.6]{BF2} while the isomorphism in the middle follows from Proposition~\ref{purevsfullgenus0}. 
The third identity then also follows.
\end{proof}

\begin{lemma}\label{decpreservation1}For $n\geq 2$, we have $\Aut^\I(\hG_{1,n})=\Aut^{\I_0}(\hG_{1,n})$.
\end{lemma}

\begin{proof}The case $n=2$ follows from (ii) of Proposition~\ref{g=1n=2} and the observation that, in the short exact sequence (cf.\ \cite[Lemma~7.4]{BF}):
\[1\to\Hom(\hG_{1,2}/Z(\hG_{1,2}),Z(\hG_{1,2}))\to\Aut(\hG_{1,2})\to\Aut(\hG_{1,2}/Z(\hG_{1,2})),\]
the image of $\Hom(\hG_{1,2}/Z(\hG_{1,2}),Z(\hG_{1,2}))$ has trivial intersection with $\Aut^{\I_0}(\hG_{1,2})$.

For $n\geq 3$, it is enough to prove that, for any $f\in\Aut^{\I_0}(\hG_{1,n})$ and $\g\in\cL(S)_0\subset\hL(S)_0$ separating, we have 
$f(\hI_\g)=\hI_{\g'}$, for some $\g'\in\hL(S)_0$. Let $\alpha$ be a nonseparating simple closed curve on $S$ such that $\tau_\g\in\G_\alpha$. 
After composing $f$ with a suitable inner automorphism, we can assume that $f$ preserves the stabilizer $\hG_\alpha$.
According to (ii) of Theorem~\ref{stabilizers}, this group is described by the short exact sequence:
\[1\to\hI_\alpha\to\hG_\alpha\to\hG_{0,n+2}\to 1.\]
Therefore, $f$ induces an automorphism $\bar f$ of the quotient group $\hG_\alpha/\hI_\alpha\cong\hG_{0,n+2}$. 
Let us denote by $\bar I_\g$ the image of $\hI_\g$ in this quotient. By Lemma~\ref{decpreservation}, we have that $\bar f(\bar I_\g)=\hI_{\bar\g'}$, 
for some profinite simple closed curve $\bar\g'$ on $S_{0,n+2}$. 

Let $\g'\in\hL(S_{1,n})$ be a lift of $\bar\g'$. Then, $\g'$ is also of separating type and there is an identity $f(\tau_\g)=\tau_{\g'}^h\tau_\alpha^k$, 
for some $h\in\ZZ^\ast$ and $k\in\ZZ$. The kernel of the natural epimorphism $\hG_{1,n}\to\hG_{1,1}$ is topologically generated by elements
of the form $\tau_{\beta_1}^{h}\tau_{\beta_2}^{-h}$, with $\beta_1,\beta_2\in\hL^\mathrm{ns}(S)$ and $h\in\ZZ^\ast$, and is therefore preserved
by $f$. In particular, $f(\tau_\g)$ belongs to this kernel. By projecting the identity $f(\tau_\g)=\tau_{\g'}^h\tau_\alpha^k$ to $\hG_{1,1}$, 
we then see that $k=0$, which concludes the proof of the lemma.
\end{proof}

We now proceed to the proof of Theorem~\ref{IvsIns} by induction on the genus. The base for the induction is provided by 
Lemma~\ref{decpreservation1}. So let us assume that the statement of Theorem~\ref{IvsIns} holds in genus $g-1\geq 1$ and let us prove 
it for genus $g=g(S)$. For simplicity, let us denote by $\{\hI_\s\}_{\s\in\kC(S)}$ the set of inertia groups of $\kPG(S)$. 
The key observation is that the hypothesis $g(S)\geq 2$ implies that a $k$-simplex $\s\in C(S)$, for $k=d(S)-1,d(S)-2$, contains at least a nonseparating curve. 

From the induction hypothesis, we will deduce that $\Aut^{\I_0}(\kPG(S))$ preserves the subsets of inertia groups $\{\hI_\s\}_{\s\in\kC(S)_k}$,
for $k=d(S)-1,d(S)-2$.  In fact, if $\alpha\in\s$ is nonseparating, after composing a given $f\in\Aut^{\I_0}(\kPG(S))$ with an inner automorphism, 
we can assume that $f(\alpha)=\alpha$, so that $f$ restricts to an automorphism of the stabilizer $\kPG(S)_\alpha$ which preserves the inertia group 
$\hI_\alpha$ and is contained in $\Aut^{\I_0}(\kPG(S)_\alpha)$. Since $\hI_\s\subset\kPG(S)_\alpha$ and $\kPG(S)_\alpha/\hI_\alpha\cong\kPG(S\ssm\alpha)$, 
the induction hypothesis implies that the automorphism $\bar f$ induced by $f$ on the quotient $\kPG(S)_\alpha/\hI_\alpha$ has the property that 
$\bar f(\hI_\s/\hI_\alpha)=\hI_{\s'}/\hI_\alpha$, for some $\alpha\in\s'\in\kC(S)_k$, for $k=d(S)-1,d(S)-2$, and then $f(\hI_\s)=\hI_{\s'}$. 

We can now proceed as in the proof of \cite[Theorem~6.6]{BF2}. 
Let us identify (cf.\ (i) of Remarks~\ref{algreal}) the procongruence curve complex $\kC(S)$ with the abstract simplicial 
profinite complex whose set of $k$-simplices is the set of closed subgroups $\{\hI_\s\}_{\s\in\kC_k(S)}$.
The dual graph $\kC^\ast(S)$ of $\kC(S)$ (cf.\ \cite[Definition~3.9]{BF}) has then for vertex set the set $\{\hI_\s\}_{\s\in\kC(S)_{d(S)-1}}$ 
and two such vertices $\hI_\s$ and $\hI_{\s'}$ are connected by an edge if and only if $|\s\cap\s'|=d(S)-1$. 
This happens if and only if $\hI_\s\cap\hI_{\s'}$ equals $\hI_{\s\cap\s'}$ and is a free $\ZZ$-module of rank $d(S)-1$.

The remarks above then imply that the natural action of $\Aut^{\I_0}(\kPG(S))$ on the set of closed subgroups of $\kPG(S)$ induces 
a representation $\Aut^{\I_0}(\kPG(S))\to\Aut(\kC^\ast(S))$. 

By \cite[Lemma~6.5]{BF}, the procongruence curve complex $\kC(S)$ can be reconstructed from its dual graph $\kC^\ast(S)$ 
and there is a natural isomorphism $\Aut(\kC^\ast(S))\cong\Aut(\kC(S))$. It then follows that the action of $\Aut^{\I_0}(\kPG(S))$ 
on the set of closed subgroups preserves the set of \emph{all} inertia groups of $\kPG(S)$. 
\end{proof}

For $g(S)\geq 1$, the pure procongruence mapping class group $\kPG(S)$ is topologically generated by nonseparating Dehn twists. 
It is then an $\I_0$-characteristic subgroup of the full procongruence mapping class group $\kG(S)$. We already observed
that, by \cite[Proposition~4.1, (ii)]{MN}, $\hG_{0,n}$ is a characteristic subgroup of $\hG_{0,[n]}$.
Therefore, restriction of $\I_0$-automorphisms from $\kG(S)$ to $\kPG(S)$
defines a natural homomorphism $\Aut^{\I_0}(\kG(S))\to\Aut^{\I_0}(\kPG(S))$. 
Since $\Aut^\I(\kG(S))\leq\Aut^{\I_0}(\kG(S))$ (cf.\ Remark~\ref{I_0rem}), from Theorem~\ref{IvsIns}, it follows:

\begin{corollary}\label{IvsIns2}For $d(S)>1$, we have $\Aut^\I(\kG(S))=\Aut^{\I_0}(\kG(S))$.
\end{corollary}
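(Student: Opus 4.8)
The plan is to bootstrap from the pure case (Theorem~\ref{IvsIns}), which is already in hand, using the fact recorded just before the statement that $\kPG(S)$ is an $\I$-characteristic subgroup of $\kG(S)$. Concretely, the inclusion $\Aut^{\I_0}(\kG(S))\subseteq\Aut^\I(\kG(S))$ is trivial (fewer constraints on the larger set $\hL^{\mathrm{ns}}(S)\subseteq\hL(S)_0$, so less restrictive), and the content is the reverse inclusion: an automorphism of $\kG(S)$ that preserves the set of \emph{nonseparating} inertia groups $\{\hI_\g(\kG(S))\}_{\g\in\hL^{\mathrm{ns}}(S)}$ must in fact preserve \emph{all} decomposition/inertia data, hence be an $\I$-automorphism. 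First I would observe that any $f\in\Aut^{\I_0}(\kG(S))$ restricts to $\kPG(S)$ (it preserves this characteristic subgroup), and that the restriction $f|_{\kPG(S)}$ lies in $\Aut^{\I_0}(\kPG(S))$: this is because for $\g\in\hL^{\mathrm{ns}}(S)$ we have $\hI_\g(\kPG(S))=\hI_\g(\kG(S))\cap\kPG(S)$, so $f$ sends nonseparating inertia of $\kPG(S)$ to nonseparating inertia of $\kPG(S)$. By Theorem~\ref{IvsIns}, $f|_{\kPG(S)}\in\Aut^\I(\kPG(S))$, i.e.\ $f$ already permutes \emph{every} inertia group $\hI_\g(\kPG(S))$, $\g\in\hL(S)_0$, including the separating ones.

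The remaining step is to promote this from $\kPG(S)$-inertia to $\kG(S)$-inertia. For a separating curve $\g\in\hL(S)_0$ the inertia group $\hI_\g(\kG(S))$ is the procyclic group $\tau_\g^\ZZ$ (there is no braid twist, since $\g$ does not bound a $2$-punctured disc in the separating case unless it is of that specific topological type — and even then one argues directly). The key point is to recover $\tau_\g^\ZZ\subseteq\kG(S)$ intrinsically from $\tau_\g^\ZZ\cap\kPG(S)$, which $f$ already controls. Since $\tau_\g$ itself already lies in $\PG(S)\subseteq\kPG(S)$ — a Dehn twist about any simple closed curve is a pure mapping class — we in fact have $\hI_\g(\kG(S))=\hI_\g(\kPG(S))$ for \emph{every} $\g\in\hL(S)_0$, nonseparating or not (profinite Dehn twists lie in $\kPG(S)$, cf.\ the notation $\tau_\g\in\kPG(S)$ fixed in Subsection~\ref{multicurves}, and braid twists, when present, have square a Dehn twist so their procyclic closures also lie in $\kPG(S)$ up to index $2$ — here one should be slightly careful and instead note $\hI_\g$ as defined in Definition~\ref{starcondition} is generated by $\tau_\g$ and $b_\g$, and $b_\g\in\G(S)\subseteq\kG(S)$ is not pure, so $\hI_\g(\kG(S))\neq\hI_\g(\kPG(S))$ only in the braid case). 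I would therefore split into two cases: if $\g$ bounds a $2$-punctured disc, invoke Remark~\ref{lessrestrictive}(ii) together with Proposition~7.2 of \cite{BFL} and the already-established fact that $f$ preserves topological types of pure inertia to see that $f(\hI_\g(\kG(S)))$ is again such an inertia group; otherwise $\hI_\g(\kG(S))=\tau_\g^\ZZ=\hI_\g(\kPG(S))$ is handled directly by the pure case. Finally, Proposition~\ref{0simplices} packages ``$f$ permutes all $0$-simplex inertia groups'' into ``$f\in\Aut^\I(\kG(S))$'', which is the claim; passing to the quotient by $\Inn(\kG(S))\subseteq\Aut^\I(\kG(S))$ gives the statement at the level of outer automorphisms as well, though the corollary as stated is already at the level of $\Aut$.

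The main obstacle I anticipate is the braid-twist bookkeeping: for curves $\g$ bounding a $2$-punctured disc, $\hI_\g$ contains the non-pure element $b_\g$, so the clean identity $\hI_\g(\kG(S))=\hI_\g(\kPG(S))$ fails and one must genuinely use that an $\I_0$-automorphism of $\kG(S)$, once known to restrict to an $\I$-automorphism of $\kPG(S)$, sends the $\kPG$-inertia $\tau_\g^\ZZ$ to another $\tau_{\g'}^\ZZ$ of the same topological type, and then argue that the normalizer structure (Theorem~\ref{normalizers multitwists}, or rather its boundary-free analogue from \cite{BFL}) forces $f$ to carry the full $\hI_\g(\kG(S))=\langle\tau_\g,b_\g\rangle$ to $\hI_{\g'}(\kG(S))$ — because $b_\g$ is characterized inside $\kG(S)$, up to the ambiguity of $\tau_\g^\ZZ$, as an element whose square is $\tau_\g$ and which normalizes the appropriate decomposition group. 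All of this is available from the cited results, so the proof should be short; the only delicate ingredient is making sure the topological-type preservation from \cite{BFL} is applicable, which is exactly what Remark~\ref{lessrestrictive}(ii) and the low-genus exclusions built into Theorem~\ref{IvsIns} guarantee.
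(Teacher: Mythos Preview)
Your overall strategy---restrict to $\kPG(S)$, invoke Theorem~\ref{IvsIns}, then lift back---is exactly what the paper does, and you even supply the braid-twist bookkeeping that the paper's one-line ``immediately follows'' suppresses. So the substance is fine and in the same spirit as the paper's argument.

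However, your opening sentence has the directions reversed. You write that ``the inclusion $\Aut^{\I_0}(\kG(S))\subseteq\Aut^\I(\kG(S))$ is trivial'' with the justification ``fewer constraints \ldots\ so less restrictive''---but fewer constraints make a group \emph{larger}, not smaller. The easy (though not entirely formal) inclusion is $\Aut^\I(\kG(S))\subseteq\Aut^{\I_0}(\kG(S))$, and even this needs Remark~\ref{lessrestrictive}(ii) to ensure an $\I$-automorphism sends nonseparating inertia to nonseparating inertia. You then call ``the reverse inclusion'' the statement that preserving nonseparating inertia forces preserving all inertia---which is $\Aut^{\I_0}\subseteq\Aut^\I$, the very inclusion you just declared trivial. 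Fortunately your actual argument proves $\Aut^{\I_0}\subseteq\Aut^\I$, which is the real content; just fix the labeling.

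Two smaller points. First, you should say why $\kPG(S)$ is preserved by $\Aut^{\I_0}(\kG(S))$, not merely by $\Aut^\I(\kG(S))$: the paper only asserts it is $\I$-characteristic, and you need $\I_0$-characteristic. For $g(S)\geq 1$ this is because $\kPG(S)$ is topologically generated by nonseparating Dehn twists (noted later in the paper), and for $g(S)=0$ the whole corollary reduces to Lemma~\ref{decpreservation}. Second, your braid-twist step is correct in outline but slightly loose: once $f(\tau_\g^{\ZZ})=\tau_{\g'}^{\ZZ}$ with $\g'\in\hL^b(S)$, the element $f(b_\g)$ lies in $N_{\kG(S)}(\tau_{\g'}^{\ZZ})=\kG(S)_{\g'}$ and squares to a generator of $\tau_{\g'}^{\ZZ}$; you then need that such an element generates $b_{\g'}^{\ZZ}$, which follows from the stabilizer description in Theorem~\ref{stabilizers}(i) (the image of $f(b_\g)$ in $\kG(S)_{\g'}/\hI_{\g'}$ has square trivial and the relevant quotient is torsion-free or has controlled torsion). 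Spelling this out would make the argument watertight.
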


\section{Extending $\I$-automorphisms}
The main result of this section is the following:
 
\begin{theorem}\label{compurevsfull}Let $S=S_{g,n}^k$ be a hyperbolic surface with $d(S)>1$. 
Let us recall the notation $\ring{S}:=S\ssm\dS$. We then have:
\begin{enumerate}
\item Restriction of automorphisms induces an isomorphism $\Aut^\I(\kG(\ring{S}))\cong\Aut^\I(\kPG(S))$.
\item For $(g,n+k)\neq (1,2)$, there is a natural isomorphism:
\[\Out^\I(\kPG(S))\cong\Sigma_{n+k}\times\Out^\I(\kG(\ring{S})).\]
\item For $(g,n+k)= (1,2)$, there holds $\Out^\I(\hPG(S))=\Out^\I(\hG(S))$.
\end{enumerate}
\end{theorem}

\subsection{Preliminary lemmas}Thanks to the results of \cite{BF2}, 
there is a useful criterion to determine whether an $\I$-automorphism of $\kPG(S)$ is induced by an inner automorphism of $\kG(\ring{S})$.
After identifying the procongruence curve complex $\kC(S)$ with the simplicial profinite complex whose simplices are the inertia groups 
$\{I_\s\}_{\s\in\kC(S)}$ (cf.\ (i) of Remarks~\ref{algreal}), the vertices of the procongruence pants complex $\kC_P(S)$ are identified with the set 
$\{I_\s\}_{\s\in\kC(S)_{d(S)-1}}$ of inertia subgroups of maximal rank. We then have:

\begin{lemma}\label{innercriterion}Let $\phi\in\Aut^\I(\kPG(S))$ be an element which, via the natural representation $\Aut^\I(\kPG(S))\to\Aut(\kC(S))$,
acts trivially on the simplices corresponding to the vertices of a profinite Farey subgraph $\hF_\mu$ of $\kC_P(S)$ (cf.\ \cite[Definition~6.3]{BF}). 
Then, $\phi\in\Inn(\kG(\ring{S}))$.
\end{lemma}

\begin{proof}It is not restrictive to assume $\partial S=\emptyset$. For $d(S)=1$ the claim is trivial. For $d(S)\geq 2$, \cite[Theorem~2.17]{BF2}
implies that $\phi\in\Inn(\kG^\pm(\ring{S}))$, where $\kG^\pm(\ring{S})$ is the congruence completion of the extended mapping class group $\G^\pm(\ring{S})$.
Since, by hypothesis, in particular, $\phi$ preserves the orientation of the profinite Farey subgraph $\hF_\mu$, from \cite[Theorem~2.2]{BF2}, it follows that
$\phi\in\Inn(\kG(\ring{S}))$.
\end{proof}

Let us also observe that:

\begin{lemma}\label{innerinj}Let $S=S_{g,n}^k$ and assume that $(g,n+k)\neq (0,4)$. Then, the natural homomorphism 
$\Aut^\I(\kG(S))\to\Aut^\I(\kPG(S))$ is injective.
\end{lemma}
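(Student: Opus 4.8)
The plan is to show that an $\I$-automorphism of $\kG(S)$ that restricts to an inner automorphism of $\kPG(S)$ must already be inner on $\kG(S)$, and hence that the kernel of the restriction map $\Aut^\I(\kG(S))\to\Aut^\I(\kPG(S))$ is trivial. So let $f\in\Aut^\I(\kG(S))$ with $f|_{\kPG(S)}=\inn g$ for some $g\in\kPG(S)$. Replacing $f$ by $f\circ(\inn g)^{-1}$ (which is legitimate since $\Inn(\kG(S))\subseteq\Aut^\I(\kG(S))$ by Remark~\ref{normal}), we may assume $f$ restricts to the identity on $\kPG(S)$, and the goal becomes: $f=\mathrm{id}$.

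First I would exploit that $\kPG(S)$ is normal in $\kG(S)$ with $\kG(S)/\kPG(S)\cong\Sigma_n$, and that $f$ fixes $\kPG(S)$ pointwise. For any $x\in\kG(S)$ and any $h\in\kPG(S)$ we then have $xhx^{-1}=f(xhx^{-1})=f(x)\,h\,f(x)^{-1}$, so $x^{-1}f(x)\in Z_{\kG(S)}(\kPG(S))$. The key input here is that the centralizer of $\kPG(S)$ in $\kG(S)$ is small: for $d(S)>1$ and $S\neq S_{1,2},S_1^2,S_2$ it is trivial (this is the no-boundary analogue of the computation behind (iii) of Proposition~\ref{congcent}; alternatively apply it with $k=0$, noting $Z_{\kG(S)}(U)=\{1\}$ for an open, hence in particular finite-index closed, subgroup $U=\kPG(S)$ in those cases). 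Thus $f(x)=x$ for all $x$, giving $f=\mathrm{id}$ directly in the generic range. In the remaining finitely many low-genus cases ($S=S_{1,2},S_1^2,S_2$) the centralizer is generated additionally by the hyperelliptic involution $\iota$, so the argument only yields $x^{-1}f(x)\in\langle\iota\rangle$ for each $x$, i.e.\ $f(x)\in\{x,x\iota\}$. Here one argues that the function $x\mapsto x^{-1}f(x)$ is a homomorphism $\kG(S)\to\langle\iota\rangle\cong\{\pm1\}$ (using that $\iota$ is central), so $f$ differs from the identity by the automorphism $\phi_\iota$ associated to a homomorphism $\kG(S)\to Z(\kG(S))$; but $\phi_\iota$ is not an $\I$-automorphism (its image in $\Out$ has trivial intersection with $\Out^\I$, as recorded in the proof of Proposition~\ref{comparison}(iii) and Proposition~\ref{g=1n=2}), while $f$ is, forcing $\phi_\iota$ to be trivial and hence $f=\mathrm{id}$.

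Concretely, the steps in order are: (1) reduce to $f|_{\kPG(S)}=\mathrm{id}$ using $\Inn(\kG(S))\subseteq\Aut^\I(\kG(S))$; (2) deduce $x^{-1}f(x)\in Z_{\kG(S)}(\kPG(S))$ for all $x$; (3) invoke the centralizer computation (Proposition~\ref{congcent}-type statement for closed boundary, or directly Corollary~4.11 and Theorem~4.13 in \cite{BFL}) to get $Z_{\kG(S)}(\kPG(S))=\{1\}$ outside the exceptional cases, concluding $f=\mathrm{id}$; (4) in the exceptional cases $S_{1,2},S_1^2,S_2$, show $x\mapsto x^{-1}f(x)$ is a homomorphism into $\langle\iota\rangle\cong\{\pm1\}$, identify the resulting $f$ with $\phi_\iota$, and rule it out since $\phi_\iota\notin\Aut^\I(\kG(S))$.

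The main obstacle is step (3)/(4): making sure the centralizer $Z_{\kG(S)}(\kPG(S))$ is correctly identified in all cases. For $d(S)>1$ this is essentially Corollary~4.11 in \cite{BFL} together with the identification of the center of $\kG(S)$, but for $S=S_{1,2}$ one must be a little careful — this is exactly the surface singled out throughout the paper — and one relies on the analogue of Remarks~\ref{lessrestrictive}(2) or Proposition~\ref{g=1n=2} to guarantee $\phi_\iota$ genuinely fails the $\I$-condition; once that is in hand, the homomorphism argument in step (4) is routine since $\langle\iota\rangle$ is central of order $2$.
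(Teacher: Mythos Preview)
Your approach is essentially the same as the paper's: the generic case is exactly the content of Lemma~3.3 in \cite{BFL} (which the paper cites as a black box, and which you have unpacked into the centralizer computation $x^{-1}f(x)\in Z_{\kG(S)}(\kPG(S))$), combined with Theorem~4.13 in \cite{BFL} to get $Z_{\kG(S)}(\kPG(S))=\{1\}$ outside the low-genus exceptions.

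For the exceptional cases, however, your step~(4) has a small wrinkle. Once you know $c(x):=x^{-1}f(x)$ is a homomorphism $\kG(S)\to\langle\iota\rangle$ trivial on $\kPG(S)$, observe that for $S=S_{1,2}$ (or $S_1^2$) one has $\kG(S)=\langle\iota\rangle\times\kPG(S)$ with $\iota$ projecting to the generator of $\kG(S)/\kPG(S)\cong\Sigma_2$; so if $c$ were nontrivial then $c(\iota)=\iota$ and hence $f(\iota)=\iota\cdot\iota=1$, contradicting bijectivity of $f$. This already forces $c$ trivial and $f=\mathrm{id}$. Your appeal to ``$\phi_\iota\notin\Aut^\I$'' is misdirected here: the particular $c$ you produce does not even define an automorphism, so it is not one of the elements of $\Hom(\hG(S),Z(\hG(S)))\subset\Out(\hG(S))$ in the sense used in Proposition~\ref{comparison}(iii). (The paper sidesteps all of this by citing Proposition~\ref{g=1n=2}(i) directly; for $S=S_2$ there is nothing to do since $\kG(S)=\kPG(S)$.)
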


\begin{proof}The claim is trivial for $n(S)\leq 1$, while, for $g(S)=1$ and $n(S)=2$, $k(S)=0$, or $n(S)=0$, $k(S)=2$, it follows from (i) of
Proposition~\ref{g=1n=2}. Since $(g,n+k)\neq (0,4)$, by \cite[Theorem~4.14]{BF}, we can then assume that the centralizer of $\kPG(S)$ in 
$\kG(S)$ is trivial, so that the natural homomorphism $\Inn(\kG(S))\to\Aut^\I(\kPG(S))$ is injective. By \cite[Lemma~3.3]{BF}, the homomorphism 
$\Aut^\I(\kG(S))\to\Aut^\I(\kPG(S))$ is then also injective. 
\end{proof}

The following lemma will be essential for the proof of Theorem~\ref{compurevsfull}:

\begin{lemma}\label{purevsfull}For $d(S)>1$, the natural monomorphism $\Aut^\I(\kG(\ring{S}))\hookra\Aut^\I(\kPG(S))$ 
identifies $\Inn(\kG(\ring{S}))$ with a normal subgroup of $\Aut^\I(\kPG(S))$.
\end{lemma} 

\begin{proof}Since $\kPG(S)=\kPG(\ring{S})$, it is not restrictive to assume that $\partial S=\emptyset$ and so $\ring{S}=S$.
Let us also observe that $\kG(S)$ is generated by its normal subgroup $\kPG(S)$ and braid twists. 

Since $\Inn(\kPG(S))$ is a normal subgroup of $\Aut^\I(\kPG(S))$, it is then enough to show
that, for a given braid twist $b_\g$, where $\g\in\cL^b(S)$, for every $f\in\Aut^\I(\kPG(S))$, we have that $\phi:=f\circ\inn b_\g\circ f^{-1}\in\Inn(\kG(S))$.
The key observation is that, since $\inn b_\g$ restricts to the identity on the stabilizer $\kPG(S)_\g$, the automorphism $\phi$ restricts to 
the identity on the stabilizer $\kPG(S)_{f(\g)}$. Since, for every $\s\in\kC(S)_{d(S)-2}$ such that $\g\in\s$, the vertices of the profinite Farey 
subgraph $\hF_{f(\s)}\subset\kC_P(S)$ are subgroups of the stabilizer $\kPG(S)_{f(\g)}$,
the action of $\phi$ restricts to the identity on the vertices of all these profinite Farey subgraphs. 
By Lemma~\ref{innercriterion}, we then conclude that $\phi\in\Inn(\kG(S))$.
\end{proof}

Note that for  $S=S_{g,n}^k$ and $(g,n+k)\neq(1,2)$, the quotient $\left.\Inn(\kG(\ring{S}))\right/\Inn(\kPG(S))$ identifies with the quotient
$\kG(\ring{S})/\kPG(S)\cong\Sigma_{n+k}$. From Lemma~\ref{purevsfull}, it then follows that $\Sigma_{n+k}$ identifies with a normal subgroup
of $\Out^\I(\kPG(S))$. We have:

\begin{lemma}\label{autsym}For $S=S_{g,n}^k$ a hyperbolic surface such that $(g,n+k)\neq(1,2)$, restriction of inner automorphisms
induces an epimorphism $\Out^\I(\kPG(S))\to\Inn\Sigma_{n+k}$.
\end{lemma}

\begin{proof}We can assume that $n+k\geq 2$ and, since $(g,n+k)\neq(1,2)$, in particular, that the center of $\kG(S)$ is trivial. By the results of \cite{Segal},
it is enough to show that an automorphism of $\Sigma_{n+k}$ induced by restriction of an element of $\Inn(\Out^\I(\kPG(S)))$ sends a transposition to another 
transposition. This follows from the fact that, by (iv) of Proposition~\ref{0simplices}, the conjugacy action of $\Aut^\I(\kPG(S))$ on its normal 
subgroup $\Inn\kG(S)\cong\kG(S)$ sends a profinite braid twist to another profinite braid twists, since it preserves conjugacy classes of profinite Dehn twists. 
\end{proof}

In particular, for $n+k>2$, there is a natural epimorphism $\Out^\I(\kPG(S))\to\Sigma_{n+k}$ which is a left inverse of the inclusion
$\Sigma_{n+k}\hookra\Out^\I(\kPG(S))$. For $g>1$, this is also true for $n+k=2$, thanks to the following lemma:

\begin{lemma}\label{autcurvecomplex}For $S=S_{g,n}$ such that either $g\geq 1$ and $(g,n)\neq(1,2)$ or $g=0$ and $n\geq 5$, there is 
an epimorphism $\Aut^\I(\kPG(S))\to\Sigma_n$, such that its composition with the homomorphism $\inn\co\kG(S)\to\Aut^\I(\kPG(S))$ 
is the natural epimorphism $\kG(S)\to\Sigma_n$.
\end{lemma}

\begin{proof}The statement of the lemma is similar to \cite[Lemma~4.2]{BF2} and is proved essentially in the same way.
\end{proof}

\subsection{Proof of Theorem~\ref{compurevsfull}}We can assume $n+k\geq 2$, otherwise the theorem is trivial. 
For the proof of item (i) and (ii) of the theorem, it is not restrictive to assume $\dS=\emptyset$. 
\smallskip

\noindent
(i): For $S=S_{1,2}$, this is just (i) of Proposition~\ref{g=1n=2}. Therefore, we can also assume $S\neq S_{1,2}$ and then, 
by \cite[Theorem~4.14, (i)]{BF}, that the centralizer of $\kPG(S)$ in $\kG(S)$ is trivial. 

We have to show that the natural monomorphism $\Aut^\I(\kG(S))\hookra\Aut^\I(\kPG(S))$ (cf.\ Lemma~\ref{innerinj}) is surjective. 
For this, as in the proof of Proposition~\ref{purevsfullgenus0}, we consider the short exact sequence:
\[1\to\kPG(S)\to\kG(S)\to\Sigma_n\to 1.\]

Let $\rho\co\Sigma_n\to\Out(\kPG(S))$ be the outer representation associated to the above short exact sequence. The image of $\Inn(\kG(S))$ in $\Out(\kPG(S))$
coincides with $\rho(\Sigma_n)$. From Lemma~\ref{purevsfull}, it then follows that $\rho(\Sigma_n)$ is a normal subgroup of $\Out^\I(\kPG(S))$. 

Since $Z_{\kG(S)}(\kPG(S))=\{1\}$, we have that $\Sigma_n\cong\Inn(\kG(S))\left/\Inn(\kPG(S))\right.\subset\Out(\kPG(S))$, 
so that the representation $\rho$ is faithful. The same argument of the proof of
Proposition~\ref{purevsfullgenus0} then implies that every automorphism $f\in\Aut^\I(\kPG(S))$ extends to $\kG(S)$,
thus completing the proof of the first item.
\smallskip

\noindent
(ii): By the previous item of the theorem, there is a short exact sequence:
\[1\to\left.\Inn(\kG(S))\right/\Inn(\kPG(S))\to\left.\Aut^\I(\kPG(S))\right/\Inn(\kPG(S))\to\left.\Aut^\I(\kG(S))\right/\Inn(\kG(S))\to 1.\]

Since, by hypothesis, $n\neq 2$ for $g=1$, there holds $Z(\kG(S))=Z(\kPG(S))=\{1\}$ and so we have
$\left.\Inn(\kG(S))\right/\Inn(\kPG(S))\cong\left.\kG(S)\right/\kPG(S)\cong\Sigma_n$. By Lemma~\ref{autcurvecomplex}, 
the monomorphism $\Sigma_n\hookra\Out^\I(\kPG(S))$ has a left inverse $\Out^\I(\kPG(S))\to\Sigma_n$ which provides the splitting.

\noindent
(iii): For $S=S_{1,1}^1$, we have $\hPG(S)\cong\hG(S)$ and the statement is trivial.
For $S= S_{1,2}$ or $S_1^2$, this is just the first item of Proposition~\ref{g=1n=2}.

\subsection{Inertia conditions for procongruence relative mapping class groups}\label{starcondboundary}
For a surface $S=S_{g,n}^k$ with boundary $\dS=\cup_{i=1}^k\delta_i$, let $\tS\cong S_{g,n+2k}$ be the surface introduced in Definition~\ref{tildedS}. 
The natural embedding $S\hookra \tS$ induces a monomorphism of procongruence mapping class groups $\kG(S,\dS)\hookra\kG(\tS)$ 
(cf.\ Section~\ref{MCGboundary}).
Therefore, Definition~\ref{starcondition} also applies to procongruence relative mapping class groups.
Note that $\prod_{i=1}^k\tau_{\d_i}^\ZZ$ is the maximal normal inertia subgroup of $\kG(S,\dS)$. 
We call it the \emph{normal inertia group} of $\kG(S,\dS)$. From (iii) of Proposition~\ref{congcent}, it follows that this subgroup is 
also $\I$-characteristic. In particular, there is a natural homomorphism $\Aut^\I(\kG(S,\dS))\to\Aut^\I(\kG(S))$.

\begin{definition}\label{starcondboundaryns}As we observed above, by Definition~\ref{starcondition}, for an open subgroup $U$ of $\kG(S,\dS)$, 
the group $\Aut^\I(U)$ is defined to be the subgroup of $\Aut(U)$ consisting of those automorphisms which preserve 
the inertia groups $\{\hI_\g(U)\}_{\g\in\hL(\tS)_0}$. We then let $\Aut^{\I_0}(U)$ to be the subgroup of $\Aut(U)$ consisting 
of those automorphisms which preserve the set of inertia groups $\{\hI_\g(U)\}_{\g\in\hL^\mathrm{ns}(S)}\cup\{\hI_{\delta_i}(U)\}_{i=1,\dots,k}$ 
(note that $\hL^\mathrm{ns}(S)\equiv\hL^\mathrm{ns}(\tS)$).
\end{definition}

From (iii) of Proposition~\ref{congcent}, it follows
that the \emph{normal inertia group} $A:=U\cap\prod_{i=1}^k\tau_{\d_i}^\ZZ$ of $U$ is an $\I$-characteristic subgroup of $U$. 
From Definition~\ref{starcondboundaryns}, it then follows that it is also $\I_0$-characteristic in $U$. 
In particular, if we let $\ol{U}:=U/A$, there are natural homomorphisms 
$\Aut^\I(U)\to\Aut^\I(\ol{U})$ and $\Aut^{\I_0}(U)\to\Aut^{\I_0}(\ol{U})$.

We say that a group $G$ is \emph{virtually (topologically) generated} by a subset $T$, 
if $G$ contains a finite index subgroup (topologically) generated by this subset. 

\begin{lemma}\label{injectivehom}We have:
\begin{enumerate}
\item Let $U$ be an open subgroup of $\kG(S,\dS)$ virtually topologically generated by the inertia groups $\{\hI_\g(U)\}_{\g\in\hL(\tS)_0}$. 
Then, the natural homomorphisms: 
\[\Aut^\I(U)\to\Aut^\I(\ol{U})\times\Aut(A)\hspace{0.4cm}\mbox{ and }\hspace{0.4cm}\Out^\I(U)\to\Out^\I(\ol{U})\times\Aut(A)\] 
are injective.
\item Let $U$ be an open subgroup of $\kG(S,\dS)$ virtually topologically generated by the inertia groups 
$\{\hI_\g(U)\}_{\g\in\hL^\mathrm{ns}(S)}\cup\{\hI_{\delta_i}(U)\}_{i=1,\dots,k}$. Then, the natural homomorphisms:
\[\Aut^{\I_0}(U)\to\Aut^{\I_0}(\ol{U})\times\Aut(A)\hspace{0.4cm}\mbox{ and }\hspace{0.4cm}\Out^{\I_0}(U)\to\Out^{\I_0}(\ol{U})\times\Aut(A)\] 
are injective.
\end{enumerate}
\end{lemma}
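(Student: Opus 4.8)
The plan is to prove both items simultaneously, since they differ only in which family of inertia groups is used to virtually generate $U$; write $\mathcal{I}$ for the relevant family ($\{\hI_\g(U)\}_{\g\in\hL(\tS)_0}$ in (i), $\{\hI_\g(U)\}_{\g\in\hL^\mathrm{ns}(S)}$ in (ii)), and let $\mathfrak{A}$ be $\Aut^\I$ or $\Aut^{\I_0}$ accordingly. First I would observe that $A=U\cap\prod_{i=1}^k\tau_{\d_i}^\ZZ$ is central in $U$: indeed $\prod_{i=1}^k\tau_{\d_i}^\ZZ$ lies in the centre of $\kG(\tS)_\s\cong\kG(S,\dS)$ by (ii) of Proposition~\ref{congcent} (the exceptional hyperelliptic cases are handled separately, but those are the low-modular-dimension surfaces excluded or trivial here), hence a fortiori central in the subgroup $U$. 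So the extension $1\to A\to U\to\ol U\to 1$ is a central extension by a procyclic (in general a product of copies of $\ZZ$) group.

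The key reduction is then the following: since $U$ is virtually generated by the inertia groups in $\mathcal{I}$, there is an open subgroup $U'\leq U$ generated by elements lying in those inertia groups. Now suppose $f\in\mathfrak{A}(U)$ induces the identity on $\ol U$. For any $\g$ with $\hI_\g(U)\in\mathcal{I}$, the image of $\hI_\g(U)$ in $\ol U$ is the inertia group $\hI_\g(\ol U)$ (the boundary twists $\tau_{\d_i}$ do not meet an inertia group attached to an interior curve transverse to the $\d_i$, and in any case $A\cap\hI_\g(U)$ is controlled); since $f$ preserves $\hI_\g(U)$ and is the identity modulo $A$, for each generator $x\in\hI_\g(U)$ we get $f(x)=x\cdot a_x$ with $a_x\in A\cap\hI_\g(U)$. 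But $\hI_\g(U)$ is procyclic (generated by a single profinite Dehn twist, or by a braid twist), so $A\cap\hI_\g(U)$ is either trivial or of finite index in it; in the $\I_0$-case $\g$ is nonseparating so $\tau_\g$ is not a boundary twist and $A\cap\hI_\g(U)=\{1\}$, forcing $f(x)=x$. In the general $\I$-case one argues that $f$ restricted to each $\hI_\g(U)$ is a continuous automorphism of a procyclic group fixing it modulo $A$; combining this over a generating set and using that $U'$ is generated by these inertia elements, one concludes $f|_{U'}=\mathrm{id}$, and since $U'$ is open and $U$ is residually (its finite quotients separate points), $f$ commutes with conjugation and agrees with the identity on an open subgroup, hence—because $C_U(U')$ is contained in the centre after passing to a further open subgroup, or directly because an automorphism fixing an open subgroup pointwise and compatible with the central extension must be trivial—$f=\mathrm{id}$ on all of $U$. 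This proves injectivity of $\mathfrak{A}(U)\to\mathfrak{A}(\ol U)$.

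For the outer versions, $\Out(U)\to\Out(\ol U)$: one uses Lemma~3.3 in \cite{BFL} in the form that if $\Aut(U)\to\Aut(\ol U)$ is injective and $\Inn(U)\to\Aut(\ol U)$ has image meeting the image of $\Aut(U)$ only in $\Inn(\ol U)$ suitably, then $\Out(U)\to\Out(\ol U)$ is injective; concretely, since $A$ is central the homomorphism $\Inn(U)\to\Inn(\ol U)$ is surjective with kernel $\Inn_U(A)=\{1\}$ (as $A\subseteq Z(U)$), so $\Inn(U)\cong\Inn(\ol U)$, and combined with injectivity of $\mathfrak{A}(U)\to\mathfrak{A}(\ol U)$ a diagram chase gives injectivity of $\Out^\I(U)\to\Out^\I(\ol U)$ and of $\Out^{\I_0}(U)\to\Out^{\I_0}(\ol U)$.

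\textbf{Main obstacle.} The delicate point is the very first step of the reduction: controlling $A\cap\hI_\g(U)$ and verifying that the image of $\hI_\g(U)$ in $\ol U$ is genuinely an inertia group of $\ol U$ (so that "$f\in\mathfrak{A}(U)$" really does descend to "$\ol f\in\mathfrak{A}(\ol U)$" and conversely that $\ol f=\mathrm{id}$ pins down $f$ on each inertia generator up to $A$). In the $\I_0$-case this is clean because nonseparating curves are never boundary-parallel, so $\hI_\g(U)\cap A=\{1\}$ outright; in the full $\I$-case one must separately treat the $\g$ that are the boundary curves $\d_i$ themselves, where $\hI_{\d_i}\subseteq A$ and the argument has to be rephrased in terms of the remaining generators—this is exactly why (i) needs $U$ virtually generated by \emph{all} interior inertia groups. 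I expect this bookkeeping, rather than any deep input, to be the only real content; everything else is the standard central-extension/Wells-sequence machinery already deployed in Proposition~\ref{purevsfullgenus0}.
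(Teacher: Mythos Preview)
Your approach is essentially the paper's, just with the Wells-sequence machinery unpacked by hand: the paper invokes Lemma~7.4 in \cite{BFL} to identify the kernel of $\Aut(U)_A\to\Aut(\ol U)$ (resp.\ $\Out(U)_A\to\Out(\ol U)$) with $\Hom(U,A)$, then argues that any $\phi\in\Hom(U,A)\cap\Aut^\I(U)$ vanishes on the finite-index subgroup generated by inertia groups and hence, since $A$ is torsion free, is zero. Your argument is the same computation written out without naming $\Hom(U,A)$.

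There is, however, one genuine gap in your write-up. You assert that ``$f$ preserves $\hI_\g(U)$'' and conclude $a_x\in A\cap\hI_\g(U)$, but the $\I$-condition only gives $f(\hI_\g(U))=\hI_{\g'}(U)$ for \emph{some} $\g'$; you never justify $\g'=\g$. This is exactly what is needed to get $\phi(x)\in\hI_\g(U)$ (and then, for $\g$ not a boundary curve, $\phi(x)=1$). The fix is short: since $f$ is the identity modulo $A$ and $A$ is central, $x=\tau_\g^m$ commutes with $f(x)=\tau_{\g'}^{m'}$, so $\{\g,\g',\d_1,\ldots,\d_k\}$ is a profinite multicurve; the relation $\tau_\g^m\cdot\prod_i\tau_{\d_i}^{n_i}=\tau_{\g'}^{m'}$ inside the free abelian inertia group $\hI_{\{\g,\g',\d_1,\ldots,\d_k\}}$ (cf.\ Theorem~4.10 in \cite{BFL}) then forces $\g=\g'$ and all $n_i=0$. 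Once you have this, your passage from $f|_{U'}=\mathrm{id}$ to $f=\mathrm{id}$ is correct (the map $u\mapsto u^{-1}f(u)$ is a homomorphism $U\to A$ factoring through the finite quotient $U/U'$, hence trivial since $A$ is torsion free), and the $\Out$ statement follows exactly as you say from $\Inn(U)\cong\Inn(\ol U)$.
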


\begin{proof}It is enough to prove the first item since the proof of the second is basically the same. By \cite[Lemma~7.4]{BF},
the kernel of the homomorphism $\Aut(U)_A\to\Aut(\ol{U})\times\Aut(A)$ (resp.\ $\Out(U)_A\to\Out(\ol{U})\times\Aut(A)$) 
identifies with the (free) abelian group $\Hom(\ol{U},A)$. 

The restriction of a homomorphism $\phi\in \Hom(\ol{U},A)\cap\Aut^\I(U)$ to the finite index subgroup of $\ol{U}$ topologically generated 
by inertia groups is trivial. Since $A$ is torsion free, this implies that $\phi$ itself is trivial and hence that $\Hom(\ol{U},A)\cap\Aut^\I(U)=\{1\}$.
\end{proof}

Let us observe that $\kPG(S,\dS)$, for $g(S)\geq 1$, is topologically generated by nonseparating Dehn twists, so that $\kG(S,\dS)$ is 
virtually topologically generated by them. An immediate consequence of Lemma~\ref{injectivehom} and Theorem~\ref{IvsIns} is then:

\begin{proposition}\label{IvsIns3}For $g(S)\geq 1$, there holds $\Aut^\I(\kPG(S,\dS))=\Aut^{\I_0}(\kPG(S,\dS))$ and 
$\Aut^\I(\kG(S,\dS))=\Aut^{\I_0}(\kG(S,\dS))$.
\end{proposition}

\subsection{Extending automorphisms of procongruence relative mapping class groups}\label{autrelprocong} 
In this section, we will prove a version of Theorem~\ref{compurevsfull} for procongruence relative mapping class groups.
For this, we will need a series of preliminary lemmas.

The subgroup $\kPG(S,\dS)$ of $\kG(S,\dS)$ is $\I$-characteristic. There is then a natural homomorphism $\Aut^\I(\kG(S,\dS))\to\Aut^\I(\kPG(S,\dS))$ and,
from Lemma~\ref{innerinj} and (i) of Lemma~\ref{injectivehom}, it follows:

\begin{lemma}\label{relativerestriction}Let $S=S_{g,n}^k$ and assume that $(g,n+k)\neq (0,4)$. Then, the natural homomorphism 
$\Aut^\I(\kG(S,\dS))\to\Aut^\I(\kPG(S,\dS))$ is injective.
\end{lemma}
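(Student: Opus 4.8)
The plan is to follow the strategy used for Lemma~\ref{innerinj}, but the boundary twists force a little extra work: since $\kPG(S,\dS)$ is not central in its centralizer inside $\kG(S,\dS)$, injectivity cannot be read off directly from Lemma~3.3 in \cite{BFL}. Write $A:=\prod_{i=1}^k\tau_{\d_i}^\ZZ$ for the normal inertia group of $\kG(S,\dS)$; by (iii) of Proposition~\ref{congcent} it is $\I$-characteristic, and, again by that proposition, $Z_{\kG(S,\dS)}(\kPG(S,\dS))=A$ unless $S$ carries a hyperelliptic involution. First I would dispose of the low-complexity surfaces: when $\kPG(S,\dS)=\kG(S,\dS)$ (e.g.\ $S$ closed, or $S=S_{1,1},S_1^1$) there is nothing to prove, and for $S=S_{1,2}$ or $S_1^2$ the statement can be extracted from Proposition~\ref{g=1n=2}, the extra $\{\pm1\}$ produced by the hyperelliptic involution being visibly excluded by the $\I$-condition. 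So from now on I may assume $Z_{\kG(S,\dS)}(\kPG(S,\dS))=A$.

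Now let $f\in\Aut^\I(\kG(S,\dS))$ restrict to the identity on $\kPG(S,\dS)$; the goal is $f=\mathrm{id}$. Since $\kPG(S,\dS)$ is normal and pointwise fixed by $f$, for every $g\in\kG(S,\dS)$ the automorphisms $\inn(f(g))$ and $\inn(g)$ agree on $\kPG(S,\dS)$, so $g^{-1}f(g)\in Z_{\kG(S,\dS)}(\kPG(S,\dS))=A$. Hence $g\mapsto g^{-1}f(g)$ is a continuous $1$-cocycle of $\kG(S,\dS)$ valued in the abelian group $A$ on which $\kG(S,\dS)$ acts by conjugation; as $\kPG(S,\dS)$ centralizes $A$ and is killed by this cocycle, it descends to a cocycle of the finite quotient $\kG(S,\dS)/\kPG(S,\dS)\cong\Sigma_n\times\Sigma_k$, which acts on $A$ by permuting the procyclic factors $\tau_{\d_i}^\ZZ$. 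Since $A$ is a permutation module, $H^1(\Sigma_n\times\Sigma_k,A)=0$ (Shapiro's lemma), so the cocycle is principal and $f=\inn(a)$ for some $a\in A$. Concretely this amounts to checking $f$ on a topological generating set of $\kG(S,\dS)$ obtained from $\kPG(S,\dS)$ together with the braid twists realizing the transpositions of punctures — which are fixed by $f$ because $A$ is torsion-free and $f$ fixes their squares — and the half-twists $b_{\g_{ij}}$ exchanging the boundary components $\d_i,\d_j$, on which, using the commutation relations of Theorem~4.10 in \cite{BFL}, $f$ is forced to act by $b_{\g_{ij}}\mapsto b_{\g_{ij}}(\tau_{\d_i}\tau_{\d_j}^{-1})^{c_{ij}}$, and then producing a single $a\in A$ reproducing all of these values.

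Finally, $f=\inn(a)$ with $a\in A$ lies in $\Inn_{\kG(S,\dS)}(A)\cap\Aut^\I(\kG(S,\dS))$, hence in $\Inn_{\kG(S,\dS)}(A)\cap\Inn^\I(\kG(S,\dS))$; by the semidirect product decomposition of Proposition~\ref{I-inn} this intersection is trivial, so $f=\mathrm{id}$ and the lemma follows. I expect the main obstacle to be precisely the passage to $f\in\Inn_{\kG(S,\dS)}(A)$: controlling how $f$ moves the boundary-permuting half-twists rests on the same computation with the $b_{\g_{ij}}$ and the boundary twists that underlies Proposition~\ref{I-inn}, and one must be careful that the surfaces carrying a hyperelliptic involution do not smuggle in an extra $\{\pm1\}$ — the reason they are treated apart.
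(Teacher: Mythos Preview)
Your argument is correct and takes a genuinely different route from the paper's. The paper does not compute a cocycle; instead, after disposing of the same exceptional surfaces, it observes that $\kPG(S,\dS)$ contains the full normal inertia group $A$, so from the restriction of $f$ to $\kPG(S,\dS)$ one can recover the conjugation action of $f$ on $\Inn(\kG(S,\dS))=\Inn_{\kG(S,\dS)}(A)\rtimes\Inn^\I(\kG(S,\dS))$ and hence the induced automorphism of $\kG(S,\dS)/(\tau_{\d_1}\cdots\tau_{\d_k})^{\ZZ}$; a short sublemma (again via Lemma~7.4 in \cite{BFL}) then shows $\Aut^\I(\kG(S,\dS))\hookra\Aut^\I(\kG(S,\dS)/Z(\kG(S,\dS)))$ is injective. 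Your approach is more direct: once $Z_{\kG(S,\dS)}(\kPG(S,\dS))=A$, the map $g\mapsto g^{-1}f(g)$ is a $1$-cocycle on the finite quotient $\Sigma_n\times\Sigma_k$ with values in the permutation module $A\cong\ZZ^k$, and Shapiro kills $H^1$ because $\Hom(\Sigma_n\times\Sigma_{k-1},\ZZ)=0$; then Proposition~\ref{I-inn} finishes exactly as you say. Your route avoids the detour through the quotient by the center and packages the ``producing a single $a\in A$'' step cleanly as a cohomology vanishing, whereas the paper's route has the advantage of reusing Lemma~\ref{innerinj} and the Lemma~7.4 machinery already in place. Your handling of $S_1^2$ is a bit terse --- there the centralizer picks up the hyperelliptic $\{\pm1\}$ and one must argue, as the paper does via the exact sequence $1\to\Hom(\hG(S,\dS),Z(\hG(S,\dS)))\to\Aut(\hG(S,\dS))\to\Aut(\hPG(S,\dS))$, that this extra factor is killed by the $\I$-condition --- but the idea is right.
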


For $(g,n+k)\neq (0,4)$, let us identify $\Inn(\kG(S,\dS))$ with a subgroup of $\Aut^\I(\kPG(S,\dS))$ by means of the above monomorphism. 
We then have:

\begin{lemma}\label{purevsfull2}For $d(S)>1$, there holds $\Inn(\kG(S,\dS))\lhd\Aut^\I(\kPG(S,\dS))$.
\end{lemma}

\begin{proof}Composing the natural monomorphism $\Aut^\I(\kG(S,\dS))\hookra\Aut^\I(\kPG(S,\dS))$ (cf.\ Lemma~\ref{relativerestriction})
with the monomorphism of (ii), Lemma~\ref{injectivehom}, we can identify $\Inn(\kG(S,\dS))$ with a subgroup of 
$\Aut^\I(\kPG(S))\times\Aut(\prod_{i=1}^k\tau_{\d_i}^\ZZ)$. Let us then observe that, for $S=S_{g,n}^k$, the image of $\Inn(\kG(S,\dS))$
in the quotient $\Out^\I(\kPG(S))\times\Aut(\prod_{i=1}^k\tau_{\d_i}^\ZZ)$ identifies with:
\[\Inn(\kG(S,\dS))\left/\Inn(\kPG(S,\dS))\right.\cong\Inn(\kG(S))\left/\Inn(\kPG(S))\right.\cong\Sigma_n\times\Sigma_k.\]

From Lemma~\ref{purevsfull} (and its proof), it then follows that the image of the group $\Inn(\kG(S,\dS))$ inside 
$\Aut^\I(\kPG(S))\times\Aut(\prod_{i=1}^k\tau_{\d_i}^\ZZ)$ is normalized by the image of $\Aut^\I(\kPG(S,\dS))$.
\end{proof}

We also have:

\begin{lemma}\label{autcurvecomplex2}For $S=S_{g,n}^k\neq S_{1,2}$ a hyperbolic surface, there is a natural
epimorphism $\Aut^\I(\kPG(S,\dS))\to\Sigma_n\times\Sigma_k$, such that its composition with the homomorphism 
$\inn\co\kG(S,\dS)\to\Aut^\I(\kPG(S,\dS))$ is the natural epimorphism $\kG(S,\dS)\to\Sigma_n\times\Sigma_k$.
\end{lemma}

\begin{proof}For $n=0$ and $k>0$, we consider the action of $\Aut^\I(\kPG(S,\dS))$ on the normal inertia subgroup $\prod_{i=1}^k\tau_{\d_i}^\ZZ$ 
induced by restriction of inner automorphisms. This action permutes the procyclic subgroups $\tau_{\d_i}^\ZZ$ and defines an epimorphism 
$\Aut^\I(\kPG(S,\dS))\to\Sigma_k$ with the desired properties.

For $n>0$, we consider the natural homomorphism $\Aut^\I(\kPG(S,\dS))\to\Aut^\I(\kPG(S))$.
We then get, for $g=0$, by the isomorphism~(\ref{HS}), and, for $g>0$, $(g,n+k)\neq(1,2)$, by Lemma~\ref{autcurvecomplex},
a homomorphism $\Aut^\I(\kPG(S,\dS))\to\Sigma_{n+k}$ compatible with the natural homomorphisms 
$\Inn(\kG(S,\dS))\hookra\Aut^\I(\kPG(S,\dS))$ and $\Inn(\kG(S,\dS))\twoheadrightarrow\Sigma_n\times\Sigma_k\subseteq\Sigma_{n+k}$.

Let $\{\g_0,\g_1\}\in C(S)\subset\kC(S)$ be a $1$-simplex bounding an annulus in $S$ containing a boundary component of the surface. 
From the description of the normalizer in $\kPG(S,\dS)$ of the closed subgroup topologically generated by $\tau_{\g_0}$ and $\tau_{\g_1}$ 
(cf.\ Theorem~\ref{normalizers multitwists}), it easily follows that an element of $\Aut^\I(\kPG(S,\dS))$ maps the simplex $\{\g_0,\g_1\}$ 
to another simplex of $\kC(S)$ which has the same topological type, that is to say which is in the $\kG(S)$-orbit of a $1$-simplex in $C(S)$ 
consisting of curves which also bound an annulus in $S$. 

Therefore, the action of $\Aut^\I(\kPG(S,\dS))$ on the set of all punctures and boundary
components of $S=S_{g,n}^k$ preserves the partition of this set into punctures and boundary components. 
This means that the image of the homomorphism $\Aut^\I(\kPG(S,\dS))\to\Sigma_{n+k}$ defined above is the subgroup $\Sigma_n\times\Sigma_k$,
which stabilizes this partition. The second claim of the lemma is also clear.
\end{proof}

The analogue of Theorem~\ref{compurevsfull} then holds:

\begin{theorem}\label{purevsfullboundary}Let $S=S_{g,n}^k$ be a hyperbolic surface such that $d(S)>1$.
\begin{enumerate}
\item Restriction of automorphisms induces an isomorphism: 
\[\Aut^\I(\kG(S,\dS))\cong\Aut^\I(\kPG(S,\dS)).\]
\item For $S\neq S_{1,2}$, there is a natural isomorphism:
\[\Out^\I(\kPG(S,\dS))\cong\Sigma_n\times\Sigma_k\times\Out^\I(\kG(S,\dS)).\]
\end{enumerate}
\end{theorem} 

\begin{proof}(i): We can assume $k>0$ and then, since $d(S)>1$, that the centers of $\kPG(S,\dS)$ and $\kG(S,\dS)$ 
do not contain a hyperelliptic involution. We have to show that the natural monomorphism $\Aut^\I(\kG(S,\dS))\hookra\Aut^\I(\kPG(S,\dS))$ 
(cf.\ Lemma~\ref{relativerestriction}) is surjective. For this, we consider the short exact sequence:
\[1\to\kPG(S,\dS)\to\kG(S,\dS)\to\Sigma_n\times\Sigma_k\to 1.\]

By Lemma~\ref{purevsfull2} and item (iii) of Proposition~\ref{congcent}, we can then argue as in the proof of the first item of Theorem~\ref{compurevsfull}.
\medskip

\noindent
(ii): By the first item of Theorem~\ref{purevsfullboundary}, proved above, and Lemma~\ref{autcurvecomplex2}, we can argue as in the proof of item
(ii) of Theorem~\ref{compurevsfull}.
\end{proof}

\section{Homomorphisms between outer automorphism groups of procongruence mapping class groups}
In this section, we will assume that $S$ is a hyperbolic surface with empty boundary.
\subsection{Functorial homomorphisms}\label{functorialhomomorphisms}
Let $\g$ be a nonseparating simple closed curve on $S$. By (ii) of Theorem~\ref{stabilizers}, there is a short exact sequence:
\[1\to\tau_\g^\ZZ\to\kPG(S)_{\vec\g}\to\kPG(S\ssm\g)\to 1.\]
Since the group $\kPG(S\ssm\g)$ is center free (cf.\ \cite[Corollary~6.2]{[B3]}), we have $Z(\kPG(S)_{\vec\g})=\tau_\g^\ZZ$.

We want to construct a natural homomorphism $\Aut^\I(\kPG(S))\to\Out^\I(\kPG(S\ssm\g))$.
The first step is to construct a homomorphism $\Aut^\I(\kPG(S))\to\Out^\I(\kPG(S)_{\vec\g})$. 

\begin{remark}\label{inertiapreservingdef}Note that, for $U$ an open subgroup of $\kG(S)_\g$ (e.g.\ $\kPG(S)_{\vec\g}$), the nontrivial inertia groups 
of $U$ are parameterized by the subcomplex $\mathrm{Star}(\g)\subset\kC(S)$ while the nontrivial inertia groups of the image of $U$ via the natural 
homomorphism $\kG(S)_\g\to\kG(S\ssm\g)$ are parameterized by the subcomplex $\Link(\g)\cong\kC(S\ssm\g)\subset\kC(S)$ (cf.\ \cite[Remark~4.7]{[B3]}). 
\end{remark}

We then proceed as in the proof of \cite[Theorem~7.4]{[B3]}.
Let us choose an orientation for every element of $\hL(S)$ (cf.\ \cite[Section~4]{[B3]}). By definition, an element
$f\in \kG(S)_\g$ preserves the orientation chosen for $\g$ if and only if $f\in\kG(S)_{\vec\g}$.

For $f\in\Aut^\I(\kPG(S))$, by items (ii) and (iii) of Proposition~\ref{0simplices}, there is an element $x\in\kPG(S)$ 
such that $\inn x\circ f$ preserves the inertia group $I_\g$ and, by \cite[Corollary~4.12, (i)]{BF}, also the decomposition group 
$\kPG(S)_\g$. Let us then observe that $\kPG(S)_{\vec\g}$ is a $\I$-characteristic subgroup of $\kPG(S)_\g$, because it is (topologically) generated
by the profinite Dehn twists contained in $\kPG(S)_\g$. Hence, by restriction, $\inn x\circ f$ induces an automorphism of $\kPG(S)_{\vec\g}$.

Since there is an element of $\kPG(S)_\g$ which reverses the orientation of $\g$, we can choose the element 
$x\in\kPG(S)$ in a way that it sends the orientation fixed on $\g$ to the orientation fixed on $x(\g)$. If $y\in\kPG(S)$ is another such element, 
then $xy^{-1}\in\kPG(S)_{\vec\g}$. In this way, the automorphism $\inn x\circ f$ is determined by $f$ modulo an inner automorphism of $\kPG(S)_{\vec\g}$,
so that we get a natural homomorphism:
\begin{equation}\label{stabrestriction}
\td{PR}_\g\co\Aut^\I(\kPG(S))\to\Out^\I(\kPG(S)_{\vec\g}).
\end{equation}

Since $Z(\kPG(S)_{\vec\g})=\tau_\g^\ZZ$ is a characteristic subgroup of $\kPG(S)_{\vec\g}$, there is also a natural homomorphism
$\Out^\I(\kPG(S)_{\vec\g})\to\Out^\I(\left.\kPG(S)_{\vec\g}\right/\tau_\g^\ZZ)\cong\Out^\I(\kPG(S\ssm\g))$.
Composing with the homomorphism~(\ref{stabrestriction}), we get the natural homomorphism: 
\[PR_\g\co\Aut^\I(\kPG(S))\to\Out^\I(\kPG(S\ssm\g)).\]

\begin{remark}\label{notOut}Note that $PR_\g$ does not descend to a homomorphism from $\Out^\I(\kPG(S))$ to $\Out^\I(\kPG(S\ssm\g))$.
In fact, an inner automorphism of $\kPG(S)$ which reverses the orientation of $\g$ (e.g.\ a hyperelliptic involution fixing $\g$) induces on
$\kPG(S\ssm\g)$ by the above procedure an automorphism which is not inner (it is in the image of $\Inn(\kG(S\ssm\g))$ in $\Aut^\I(\kPG(S\ssm\g))$).
\end{remark}

By Theorem~\ref{compurevsfull}, there is a natural homomorphism $\Out^\I(\kPG(S\ssm\g))\to\Out^\I(\kG(S\ssm\g))$.
Composing with $PR_\g$, we get the homomorphism:
\[\ol{PR}_\g\co\Aut^\I(\kPG(S))\to\Out^\I(\kG(S\ssm\g)).\]
It is easy to see that $\Inn(\kG(S))$ is contained in the kernel of this homomorphism. 
So, again by Theorem~\ref{compurevsfull}, we get a natural homomorphism:
\begin{equation}\label{stabinducedfull1}
R_\g\co\Out^\I(\kG(S))\to\Out^\I(\kG(S\ssm\g)).
\end{equation}

Let $\delta$ be a simple closed curve on $S$ bounding a $2$-punctured closed disc $D_\delta$. There is then a short exact sequence 
(cf.\ (ii) of Theorem~\ref{stabilizers}):
\begin{equation}\label{deltasequence}
1\to\tau_\delta^\ZZ\to\kPG(S)_\delta\to\kPG(S\ssm D_\delta)\to 1.
\end{equation}
Note that, from \cite[Corollary~6.2]{[B3]}, it follows that $Z(\kPG(S)_\delta)=\tau_\delta^\ZZ$.

The same construction used above for a nonseparating curve $\g$ (except that, in this case, since there holds $\kPG(S)_\delta=\kPG(S)_{\vec\delta}$, 
we do not need to consider orientations) shows that there is a natural homomorphism:
\begin{equation}\label{stabrestriction2}
\td{PR}_{\delta}\co\Aut^\I(\kPG(S))\to\Out^\I(\kPG(S)_\delta),
\end{equation}
whose kernel contains $\Inn(\kPG(S))$ and thus descends to a homomorphism:
\begin{equation}\label{stabrestriction3}
PR_{\delta}\co\Out^\I(\kPG(S))\to\Out^\I(\kPG(S)_\delta).
\end{equation}
Since $\Out^\I(\kPG(S)_\delta)$ preserves the center $\tau_\delta^\ZZ$ of $\kPG(S)_\delta$, from the short exact sequence~\eqref{deltasequence}, 
we also get a homomorphism:
\[PR_{D_\delta}\co\Out^\I(\kPG(S))\to\Out^\I(\kPG(S\ssm D_\delta)),\]
and then, by Theorem~\ref{compurevsfull}, a homomorphism:
\begin{equation}\label{stabinducedfull2}
R_{D_\delta}\co\Out^\I(\kG(S))\to\Out^\I(\kG(S\ssm D_\delta)).
\end{equation}

For $d(S)>1$ and $\g$ a simple closed curve on $S$ which is either nonseparating or bounds a $2$-punctured disc, $\Inn\kPG(S)_{\vec\g}$ 
acts trivially on the procyclic center $\hI_\g=\tau_\g^\ZZ$ of $\kPG(S)_{\vec\g}$. Therefore, there is a natural representation, induced by
restriction of automorphisms, $\Out^\I(\kPG(S)_\g)\to\Aut(\hI_\g)$. Composing with the homomorphism $\td{PR}_\g$
(cf.~(\ref{stabrestriction}) or~(\ref{stabrestriction2})), we get a representation $\td{\Chi}(S)_\g\co\Aut^\I(\kPG(S))\to\Aut(\hI_\g)$. 

It is easy to check that the image in $\Aut^\I(\kPG(S))$ of $\Inn\kG(S)$ is contained in the kernel of the representation $\td{\Chi}(S)_\g$. 
Therefore, again by Theorem~\ref{compurevsfull}, we get a natural character which only depends on the topological type of $\g$:
\begin{equation}\label{character}
\Chi(S)_\g\co\Out^\I(\kG(S))\to\Aut(\hI_\g)\cong\ZZ^\ast.
\end{equation}

The following theorem is a key result of this paper:

\begin{theorem}\label{functorialinj}Let us assume $\dS=\emptyset$ and $d(S)>1$. With the above definitions, we have:
\begin{enumerate}

\item For $g(S)\geq 1$, let $\g$ be a nonseparating simple closed curve on $S$. Then, there is a natural monomorphism:
\[R_\g\co\Out^\I(\kG(S))\hookra\Out^\I(\kG(S\ssm\g)).\]

\item For $n(S)\geq 2$, let $\delta$ be a simple closed curve bounding a 
$2$-punctured closed disc $D_\delta$ on $S$. Then, there is a natural monomorphism:
\[R_{D_\delta}\co\Out^\I(\hG(S))\hookra\Out^\I(\hG(S\ssm D_\delta)).\]
\end{enumerate}
\end{theorem}

For the proof of Theorem~\ref{functorialinj}, we will need a series of lemmas.

\begin{lemma}\label{stabilizervsquotient1}For $g(S)\geq 1$, let $\g$ be a nonseparating simple closed curve on $S$.
Then, the group $\Inn(\kG(S)_\g)$ identifies with a subgroup of $\Aut^\I(\kPG(S)_{\vec\g})$ and
there is a natural homomorphism:
\[\Aut^\I(\kPG(S)_{\vec\g})\to\Out^\I(\kG(S\ssm\g))\times\Aut(\hI_\g)\]
with kernel $\Inn(\kG(S)_\g)$.
\end{lemma}

\begin{proof} The first statement is clear since $\kPG(S)_{\vec\g}$ is a normal subgroup of $\kG(S)_\g$.

By \cite[Lemma~7.4]{BF}, the short exact sequence $1\to\hI_\g\to\kPG(S)_{\vec\g}\to\kPG(S\ssm\g)\to 1$ determines an exact sequence:
\[1\to\Hom(\kPG(S\ssm\g),\hI_\g)\to\Aut(\kPG(S)_{\vec\g})\to\Aut(\kPG(S\ssm\g))\times\Aut(\hI_\g).\]

By the explicit description of the image of $\Hom(\kPG(S\ssm\g),\hI_\g)$ in $\Aut(\kPG(S)_{\vec\g})$ given 
in the proof of \cite[Lemma~7.4]{BF}, we see that $\Aut^\I(\kPG(S)_{\vec\g})$ has trivial intersection with it.
Hence, the natural homomorphism $\Aut(\kPG(S)_{\vec\g})\to\Aut(\kPG(S\ssm\g))\times\Aut(\hI_\g)$ restricts to a monomorphism
$\Aut^\I(\kPG(S)_{\vec\g})\hookra\Aut^\I(\kPG(S\ssm\g))\times\Aut(\hI_\g)$. 

Since $Z(\kPG(S)_{\vec\g})=\hI_\g$, there is an isomorphism $\Inn(\kPG(S)_{\vec\g})\cong\Inn(\kPG(S\ssm\g))$ and then a natural monomorphism:
\begin{equation}\label{vecg}
\Out^\I(\kPG(S)_{\vec\g})\hookra\Out^\I(\kPG(S\ssm\g))\times\Aut(\hI_\g).
\end{equation}
By (i) of Theorem~\ref{compurevsfull}, there is a natural epimorphisms $\Out^\I(\kPG(S\ssm\g))\to\Out^\I(\kG(S\ssm\g))$ with kernel
$\Inn(\kG(S\ssm\g))\left/\Inn(\kPG(S\ssm\g))\right.$. 

For $n(S)=0$, the image of $\Inn(\kG(S)_\g)\left/\Inn(\kPG(S)_{\vec\g})\right.$ in $\Out^\I(\kPG(S\ssm\g))$ identifies with 
such kernel, which implies the lemma in this case.

For $n(S)\geq 1$, by (ii) of Theorem~\ref{compurevsfull}, there is a natural isomorphism:
\[\Out^\I(\kPG(S\ssm\g))\cong\Sigma_{n+2}\times\Out^\I(\kG(S\ssm\g)).\]
We claim that, via this isomorphism, the image of $\Out^\I(\kPG(S)_{\vec\g})$ in $\Out^\I(\kPG(S\ssm\g))$ intersects $\Sigma_{n+2}$ in 
the stabilizer $\Sigma_n\times\Sigma_2$ of the partition of the $n+2$ punctures of $S\ssm\g$ into those which have no boundary in $S$ and 
the two with boundary $\g$. This claim implies the lemma, since this stabilizer identifies with the image of $\Inn(\kG(S)_\g)$ in $\Out^\I(\kPG(S\ssm\g))$.

By Lemma~\ref{autsym}, the homomorphism $\Out^\I(\kPG(S)_{\vec\g})\to\Sigma_{n+2}$ is determined by the natural homomorphism
$\Aut^\I(\kPG(S)_{\vec\g})\to\Aut^\I(\kPG(S\ssm\g))$ and the inner action of $\Aut^\I(\kPG(S\ssm\g))$ on $\Inn\kG(S\ssm\g)$.
An element of $\Aut^\I(\kPG(S)_{\vec\g})$, modulo $\Inn\kPG(S)_{\vec\g}$, fixes any Dehn twist about a simple closed curve bounding 
an unpunctured genus $1$ subsurface of $S$ which contains $\g$. Therefore, an element of $\Aut^\I(\kPG(S\ssm\g))$ in the image
of $\Aut^\I(\kPG(S)_{\vec\g})$, modulo $\Inn\kPG(S\ssm\g)$, fixes any Dehn twist about a simple closed curve bounding a disc containing
only the two punctures of $S\ssm\g$ which are bounded by $\g$. In particular, by the proof of (iv) of Proposition~\ref{0simplices}, 
the inner action of such an element on $\Inn\kG(S\ssm\g)$ preserves the braid twist about the same curve, which proves the claim above.
\end{proof}

\begin{lemma}\label{stabilizervsquotient2}For $d(S)>1$, $g(S)\geq 0$, $n(S)\geq 2$ and $\delta$ a simple closed curve bounding a 
$2$-punctured closed disc $D_\delta$ on $S$, the group $\Inn(\kG(S)_\delta)$ identifies with a subgroup of $\Aut^\I(\kPG(S)_\delta)$ 
and there is a natural homomorphism:
\[\Aut^\I(\kPG(S)_\delta)\to\Out^\I(\kG(S\ssm D_\delta))\times\Aut(\hI_\delta)\] 
with kernel $\Inn(\kG(S)_\delta)$.
\end{lemma}

\begin{proof}The proof is essentially the same as that of Lemma~\ref{stabilizervsquotient1} but simpler.
\end{proof}

We will also need the lemma:

\begin{lemma}\label{item1}For $d(S)>1$, suppose that one of the following hypotheses is satisfied:
\begin{enumerate}
\item $g(S)\geq 1$ and $\g$ is a nonseparating simple closed curve on $S$;
\item $n(S)\geq 2$ and $\g$ is a simple closed curve on $S$ bounding a $2$-punctured disc.
\end{enumerate}
Then, the kernel of the natural homomorphism, induced by $\td{PR}_\g$ (cf.~(\ref{stabrestriction}) and~(\ref{stabrestriction2})): 
\[\Aut^\I(\kPG(S))\to\left.\Aut^\I(\kPG(S)_{\vec\g})\right/\Inn(\kG(S)_\g)\] 
is $\Inn(\kG(S))$.
\end{lemma}

\begin{proof}We already observed that $\Inn(\kG(S))$ is contained in the kernel of this homomorphism. Hence, we only need to show 
the reverse inclusion. 

Let $f\in\ker(\Aut^\I(\kPG(S))\to\left.\Aut^\I(\kPG(S)_{\vec\g})\right/\Inn(\kG(S)_\g))$. 
After composing $f$ with an element of $\Inn(\kG(S))$, we can assume
that $f\in\Aut^\I(\kPG(S))_\g$. Composing again by an element of $\Inn(\kG(S)_\g)\subset\Inn(\kG(S))$, we can then also assume
that $f$ restricts to the identity on the subgroup $\kPG(S)_{\vec\g}$. 

Since, for every $\s\in\kC(S)_{d(S)-2}$ such that $\g\in\s$, the vertices of the profinite Farey subgraph $\hF_{\s}\subset\kC_P(S)$ identify with 
subgroups of the stabilizer $\kPG(S)_{\vec\g}$, the action of $f$ on the vertices of the procongruence pants complex 
$\kC_P(S)$, induced by the natural representation $\Aut^\I(\kPG(S))\to\Aut(\kC(S))$, trivially extends to the identity map on the vertices 
of all profinite Farey subgraphs $\hF_\s\subset\kC_P(S)$ such that $\g\in\s$. 

Since $d(S)>1$, for $\g$ nonseparating, we have that $d(S\ssm\g)\geq 1$ (resp.\ $d(S\ssm D_\g)\geq 1$ for $\g$ separating). Therefore, 
the set of profinite Farey subgraphs $\hF_\s\subset\kC_P(S)$ such that $\g\in\s$ is non-empty and, from Lemma~\ref{innercriterion}, it follows 
that $f\in\Inn(\kG(S))$.
\end{proof}



\begin{proof}[Proof of Theorem~\ref{functorialinj}](i):  
By Lemma~\ref{stabilizervsquotient1} and (i) of Lemma~\ref{item1}, the kernel of the homomorphism:
$(\ol{PR}_\g,\td{\Chi}(S)_\g)\co\Aut^\I(\kPG(S))\to\Out^\I(\kG(S\ssm\g))\times\Aut(\hI_\g)$ is $\Inn(\kG(S))$,
so that, by Theorem~\ref{compurevsfull}, the natural homomorphism:
\[(R_\g,\Chi(S)_\g)\co\Out^\I(\kG(S))\to\Out^\I(\kG(S\ssm\g))\times\Aut(\hI_\g)\] 
is injective.

For $g(S)\geq 2$, the conclusion of the theorem then follows observing that the character~(\ref{character})
$\Chi(S)_\g\co\Out^\I(\kG(S))\to\Aut(\hI_\g)$ has the same kernel of the composition of the homomorphism
$R_\g\co\Out^\I(\kG(S))\to\Out^\I(\kG(S\ssm\g))$ with $\Chi(S\ssm\g)_{\g'}\co\Out^\I(\kG(S\ssm\g))\to\Aut(\hI_{\g'})$, 
where $\g'$ is any nonseparating simple closed curve on $S\ssm\g$.

For $g(S)=1$ and $n(S)=2$, through the natural isomorphism $\Out^\I(\kG(S))\cong\GT$ (cf.\ (i) of Proposition~\ref{g=1n=2}), 
the character~(\ref{character}) $\Chi(S)_\g\co\Out^\I(\kG(S))\to\Aut(\hI_\g)$ identifies with the natural character 
$\chi_\l\co\GT\to\ZZ^\ast$ on the Grothendieck-Teichm\"uller group defined by the assignment $(f,\l)\mapsto\l$. The latter
clearly factors through the homomorphism $R_\g\co\Out^\I(\kG(S))\to\Out^\I(\kG(S\ssm\g))$ and the natural isomorphism
$\Out^\I(\kG(S\ssm\g))\cong\GT$ (cf.\ Proposition~\ref{purevsfullgenus0}) and the conclusion follows as above.

For $g(S)=1$ and $n(S)> 2$, through a series of natural homomorphisms of type~(\ref{stabinducedfull2}), we can reduce to 
the case $g(S)=1$ and $n(S)=2$ treated above.
\smallskip

\noindent
(ii): 
From Lemma~\ref{stabilizervsquotient2} and (ii) of Lemma~\ref{item1}, it follows that the homomorphism:
\[(R_\delta,\Chi(S)_\delta)\co\Out^\I(\kG(S))\to\Out^\I(\kG(S\ssm D_\delta))\times\Aut(\hI_\delta)\] 
is injective. The conclusion of the theorem then follows from a similar argument as above.
\end{proof}

\subsection{Injective functorial homomorphisms}\label{injectivehoms}
There is a third natural homomorphism between outer $\I$-automorphism groups of procongruence mapping class groups 
which we need to consider. For $S$ a closed hyperbolic surface of genus $\geq 2$ and $P\in S$, let us consider the associated procongruence 
Birman short exact sequence (cf.\ \cite[Corollary~4.7]{congtop}):
\[1\to\hp_1(S,P)\to\kG(S\ssm P)\to\kG(S)\to 1,\]
where a simple generator $\g$ of $\pi_1(S,P)\subset\hp_1(S,P)$ is sent by the map $\hp_1(S,P)\to\kG(S\ssm P)$ to the bounding pair map 
$\tau_{\g_1}\tau_{\g_2}^{-1}$, where $\g_1$ and $\g_2$ are the boundary components of a tubular neighborhood of $\g$ in $S$.
In particular, the group $\hp_1(S,P)$ is (topologically) generated in $\kG(S\ssm P)$ by such bounding pair maps.

Since, by \cite[Theorem~5.5]{BF}, the action of $\Aut^\I(\kG(S\ssm P))$ on the procongruence curve complex $\kC(S\ssm P)$ preserves 
topological types of simplices, it follows, in particular, that the action of $\Aut^\I(\kG(S\ssm P))$ on $\kG(S\ssm P)$ preserves the topological 
type of bounding pair maps associated to annuli on $S$ containing the puncture and so it preserves the image of $\hp_1(S,P)$ in $\kG(S\ssm P)$. 
An element of $\Aut^\I(\kG(S\ssm P))$ then induces an automorphism on the quotient $\kG(S\ssm P)/\hp_1(S,P)\cong\kG(S)$.

Therefore, there is a natural homomorphism $\td{B}_P\co\Aut^\I(\kG(S\ssm P))\to\Aut^\I(\kG(S))$ and then a natural homomorphism:
\begin{equation}\label{Birmaninduced}
B_P\co\Out^\I(\kG(S\ssm P))\to\Out^\I(\kG(S)).
\end{equation}

In conclusion, we have constructed three types of natural homomorphisms between outer $\I$-automorphism groups of procongruence 
mapping class groups:
\begin{enumerate}
\item For $g(S)\geq 1$, a homomorphism $R_\g\co\Out^\I(\kG(S))\to\Out^\I(\kG(S\ssm\g))$.
\item For $n(S)\geq 2$, a homomorphism $R_{D_\delta}\co\Out^\I(\kG(S))\to\Out^\I(\kG(S\ssm D_\delta))$.
\item For $S$ a closed surface, a homomorphism $B_P\co\Out^\I(\kG(S\ssm P))\to\Out^\I(\kG(S))$.
\end{enumerate}

Let $S$ and $S'$ be surfaces such that $g(S)\geq g(S')$ and $\chi(S)\leq\chi(S')<0$. 
Cutting $S$ along $g(S)- g(S')$ nonseparating simple closed curves and then filling in $\chi(S')-\chi(S)$ punctures, 
we obtain a surface homeomorphic to $S'$. Composing $g(S)- g(S')$ maps of type~(i) with 
$\chi(S')-\chi(S)$ maps of type~(ii), if $n(S')>0$, or $\chi(S')-\chi(S)-1$ maps of type~(ii) and a
map of type~(iii), if $n(S')=0$, we obtain a homomorphism:
\[\mu_{S,S'}\co\Out^\I(\kG(S))\to\Out^\I(\kG(S')).\]

It is not difficult to check that changing the order of the compositions between the maps of the various types  
gives the same homomorphism and hence also that the collection of natural homomorphisms $\{\mu_{S,S'}\}$ thus obtained is functorial in the sense that, 
given surfaces $S,S'$ and $S''$ such that
$g(S)\geq g(S')\geq g(S'')$ and $\chi(S)\leq\chi(S')\leq\chi(S'')<0$, there holds $\mu_{S,S''}=\mu_{S',S''}\circ\mu_{S,S'}$. 
By Theorem~\ref{functorialinj}, Corollary~\ref{IvsIns2} and the above discussion, we have:

\begin{theorem}\label{functorialhom}Let $S$ and $S'$ be surfaces with empty boundary such that $g(S)\geq g(S')$, $\chi(S)\leq\chi(S')$ 
and $d(S')\geq 1$. Then, there is a natural and functorial monomorphism: 
\begin{equation}\label{functor}
\mu_{S,S'}\co\Out^{\I_0}(\kG(S))\hookra\Out^{\I_0}(\kG(S')).
\end{equation}
\end{theorem}

\begin{proof}We only have to show that the homomorphism $B_P\co\Out^\I(\kG(S\ssm P))\to\Out^\I(\kG(S))$ is injective.
This follows from the fact that, for $\g$ a nonseparating simple closed curve on $S\ssm P$, the composition of $B_P$ with the map
$R_\g\co\Out^\I(\kG(S))\to\Out^\I(\kG(S\ssm\g))$ equals the composition of the map $R_\g\co\Out^\I(\kG(S\ssm P))\to\Out^\I(\kG((S\ssm P)\ssm\g))$
with the map $R_{D_\delta}\co\Out^\I(\kG((S\ssm P)\ssm\g))\to\Out^\I(\kG(S\ssm\g))$, where $\delta$ is a simple closed curve on $(S\ssm P)\ssm\g$
bounding a disc containing two punctures, and we already know (cf.\ Theorem~\ref{functorialinj}) that the latter two homomorphisms are injective.
\end{proof}

\section{Automorphisms of procongruence mapping class groups}\label{GTsection}
In this section, we will prove the procongruence analogue of Ivanov theorem on automorphism groups of mapping class groups 
(cf.\ Theorem~A). We will actually prove a more precise and general formulation of this result. 

Let us recall that, by definition, the profinite Grothendieck-Teichm\"uller group comes with a natural character $\chi_\l\co\GT\to\ZZ^\ast$ 
defined by the assignment $(f,\l)\mapsto\l$ and such that  its composition with the embedding $G_\Q\subseteq\GT$ 
is the cyclotomic character. We have:

\begin{theorem}\label{GT=Out}Let $S=S_{g,n}^k$ be a hyperbolic surface with $d(S)>1$:
\begin{enumerate}
\item There is a natural isomorphism: 
\[\Out^{\I_0}(\kG(S))\cong\GT.\]
\item For $(g,n+k)\neq (1,2)$, there is a natural isomorphism:
\[\Out^{\I_0}(\kPG(S))\cong\Sigma_{n+k}\times\GT.\]
\item There is a natural isomorphism:
\[\Out^{\I}(\kG(S,\dS))\cong\GT.\]
\item The outer action of $\GT$ on $\kG(S,\dS)$ of item (iii) restricts to a genuine action on the normal inertia group 
$\prod_{i=1}^k\tau_{\d_i}^\ZZ$ with the property that each procyclic subgroup $\tau_{\d_i}^\ZZ$ is preserved and acted upon through 
the character $\chi_\l\co\GT\to\ZZ^\ast$.
\item For $S\neq S_{1,2}$, there is a natural isomorphism:
\[\Out^{\I}(\kPG(S,\dS))\cong\Sigma_n\times\Sigma_k\times\GT.\]
\end{enumerate}
\end{theorem}

\begin{remark}
In the above as in all following statements, "natural" means that the isomorphisms are compatible with the homomorphisms $\mu_{S,S'}$ introduced in
Section~\ref{injectivehoms}.
\end{remark}

\subsection{The proof of Theorem~\ref{GT=Out}}By (ii) of Theorem~\ref{compurevsfull} (cf.\ also Theorem~\ref{IvsIns} and  Corollary~\ref{IvsIns2}) 
and (ii) of Theorem~\ref{purevsfullboundary}, we have that (i)$\Ra$(ii) and (iii)$\Ra$(v). Therefore, it is enough 
to prove items (i), (iii) and (iv) of Theorem~\ref{GT=Out}. For the proof, we will need the following fundamental lemma, 
whose proof will be postponed (cf.\ Section~\ref{postponed}):

\begin{lemma}\label{GTreprcong}For a hyperbolic surface $S$, there is a natural representation:
\[\Psi_{(S,\dS)}\co\GT\to\Out^\I(\kG(S,\dS)).\]
\end{lemma}

In the next two subsections, we will show how Theorem~\ref{GT=Out} follows from Lemma~\ref{GTreprcong}.

\subsection{Lemma~\ref{GTreprcong} implies (i) of Theorem~\ref{GT=Out}}\label{LemmaImplies1}
Let us assume first that $S$ has empty boundary.
By Theorem~\ref{functorialhom}, for $d(S)>1$, there is then a natural monomorphism:
\[\mu_{S,S_{0,5}}\co\Out^{\I_0}(\kG(S))\hookra\Out^{\I_0}(\kG(S_{0,5}))=\GT.\] 
By Lemma~\ref{GTreprcong} and Corollary~\ref{IvsIns2}, there is a homomorphism $\Psi_{(S,\emptyset)}\co\GT\to\Out^{\I_0}(\kG(S))$
so that we get a series of natural homomorphisms:
\[\GT\to\Out^{\I_0}(\kG(S))\hookra\GT,\]
which proves that $\mu_{S,S_{0,5}}$ is surjective and then (i) of Theorem~\ref{GT=Out}, for $S=\ring{S}$, follows.
\smallskip

Let now $S=S_{g,n}^k$, with $k\geq 1$. By Proposition~\ref{g=1n=2} and Corollary~\ref{IvsIns2}, we already know that 
$\Out^{\I_0}(\kG(S_1^2))\cong\Out^{\I_0}(\kG(S_{1,1}^1))\cong\GT$, so that we can assume $(g,n+k)\neq(1,2)$.  
Let us also observe that, since $\kPG(S)\cong\kPG(\ring{S})$, by the case $S=\ring{S}$, just proved, of (i) of Theorem~\ref{GT=Out} 
and (ii) of Theorem~\ref{compurevsfull}, we have that $\Out^{\I_0}(\kPG(S))\cong\Sigma_{n+k}\times\GT$.
Let us then consider the short exact sequence:
\[1\to\kPG(S)\to\kG(S)\to\Sigma_n\times\Sigma_k\to 1.\]

By the above assumptions and \cite[Theorem~4.14]{BF}, we have $Z(\kPG(S))=Z(\kG(S))=\{1\}$ and, for $n,k\neq 2$, 
there also holds $Z(\Sigma_n\times\Sigma_k)=\{1\}$. Therefore, from \cite[Lemma~4.2]{MN} and Lemma~\ref{autsym}, 
it follows that there is a natural isomorphism:
\[\Out^{\I_0}(\kG(S))\cong Z_{\Out^{\I_0}(\kPG(S))}(\Inn\kG(S)\left/\Inn\kPG(S)\right.),\]
where $\Inn\kG(S)\left/\Inn\kPG(S)\right.$ is identified, via the isomorphism $\Out^{\I_0}(\kPG(S))\cong\Sigma_{n+k}\times\GT$, 
with the subgroup $\Sigma_n\times\Sigma_k$ of $\Sigma_{n+k}$. It then follows that:
\[Z_{\Out^{\I_0}(\kPG(S))}(\Inn\kG(S)\left/\Inn\kPG(S)\right.)\cong\GT,\]
proving that $\Out^{\I_0}(\kG(S))\cong\GT$, for $n,k\neq 2$.

For $n=2$ and $k\neq 2$, Lemma~\ref{autsym} and an argument similar to the proof of \cite[Lemma~4.2]{MN} imply that there is a
short exact sequence:
\[1\to\Sigma_2\to Z_{\Out^{\I_0}(\kPG(S))}(\Inn\kG(S)\left/\Inn\kPG(S)\right.)\to\Out^{\I_0}(\kG(S))\to 1.\]
In this case, there is also an isomorphism $Z_{\Out^{\I_0}(\kPG(S))}(\Inn\kG(S)\left/\Inn\kPG(S)\right.)\cong\Sigma_2\times\GT$ which
identifies the copy of $\Sigma_2$ on the right hand side with the image of $\Sigma_2$ in the above short exact sequence.  
This implies the isomorphism $\Out^{\I_0}(\kG(S))\cong\GT$ also in this case.
For $n\neq 2$ and $k=2$, we can argue exactly in the same way. 

For $n=k=2$, there is a short exact sequence:
\[1\to\Sigma_2\times\Sigma_2\to Z_{\Out^{\I_0}(\kPG(S))}(\Inn\kG(S)\left/\Inn\kPG(S)\right.)\to\Out^{\I_0}(\kG(S))\to 1\]
and an isomorphism $Z_{\Out^{\I_0}(\kPG(S))}(\Inn\kG(S)\left/\Inn\kPG(S)\right.)\cong\Sigma_2\times\Sigma_2\times\GT$,
so that, as above, we conclude that there is an isomorphism $\Out^{\I_0}(\kG(S))\cong\GT$.

\subsection{Lemma~\ref{GTreprcong} implies (iii) and (iv) of Theorem~\ref{GT=Out}}\label{LemmaImplies2}
Let us first observe that \cite[Theorem~A]{MN} and its proof imply the following special case of items (iii) and (iv) of Theorem~\ref{GT=Out}:

\begin{lemma}\label{MinaNaka}For $S=S^1_{0,n}$ and $n\geq 4$, there is a natural isomorphism:
\[\Out^{\I}(\kG(S,\dS))\cong\GT,\]
such that $\GT$ acts on the center $\tau_{\d_1}^\ZZ$ of $\kG(S,\dS)$ through the character $\chi_\l\co\GT\to\ZZ^\ast$.
\end{lemma}

\begin{proof}Note that, in the notation of \cite{MN}, we have $\kG(S,\dS)=\wh{B}_n$ and $\kG(S)=\wh{\mathcal B}_n$.
Since $\tau_{\d_1}^\ZZ$ is a characteristic subgroup of $\kG(S,\dS)$, the natural epimorphism $\kG(S,\dS)\to\kG(S)$ induces a 
homomorphism $\Aut(\kG(S,\dS))\to\Aut(\kG(S))$ which, by \cite[Theorem~4.6]{MN}, is surjective and whose kernel, as described 
in \cite[Lemma~4.5]{MN}, has trivial intersection with $\Aut^\I(\kG(S,\dS))$. Therefore, there is a natural isomorphism 
$\Aut^\I(\kG(S,\dS))\cong\Aut^\I(\kG(S))$. 

Since, by \cite[Proposition~2.2 and Proposition~3.1]{MN}, $\kPG(S)$ is a 
characteristic subgroup of $\kG(S)$, there is also a natural homomorphism $\Aut(\kG(S))\to\Aut(\kPG(S))$ which, since the
centralizer of $\kG(S)$ in $\kPG(S)$ is trivial (cf.\ for instance, \cite[Theorem~4.14]{BF}), is injective. 

By Lemma~\ref{decpreservation}, we have that $\Aut^\I(\kPG(S))=\Aut(\kPG(S))$, which then implies that 
we have $\Aut^\I(\kG(S))=\Aut(\kG(S))$ as well. By \cite[Theorem~4.3, (i)]{MN}, there
is a natural isomorphism $\Out(\kG(S))\cong\GT$. Combined with the previous isomorphisms, this yields the isomorphism, claimed in the lemma:
$\Out^{\I}(\kG(S,\dS))\cong\GT$. 

The last claim of the lemma then follows from the explicit description of this action given, for instance, in the appendix of \cite{HS}.
\end{proof}

Let us now observe that, given a nonseparating simple closed curve $\g$ on $S$, by the same construction of 
the homomorphism~(\ref{stabinducedfull1}), where we use Theorem~\ref{relativestabilizers} and Theorem~\ref{normalizers multitwists}, 
instead of Theorem~\ref{stabilizers} and \cite[Corollary~6.2]{[B3]}, and Theorem~\ref{purevsfullboundary} instead of 
Theorem~\ref{compurevsfull}, we get a natural homomorphism:
\[R_\g\co\Out^\I(\kG(S,\dS))\to\Out^\I(\kG(S\ssm\g,\dS)).\]
For $S=S_{g,n}^k$, by composing $g$ of such homomorphisms, we get a natural homomorphism:
\begin{equation}\label{stabinducedfullrel1}
R_0\co\Out^\I(\kG(S,\dS))\to\Out^\I(\kG(S_{0,n+2g}^k,\dS)).
\end{equation}

\begin{lemma}\label{indrelinj}For $d(S)>1$, the homomorphism $R_0$~(\ref{stabinducedfullrel1}) is injective.
\end{lemma}

\begin{proof}There is a natural commutative diagram:
\[\begin{array}{ccc}
\Out^\I(\kG(S,\dS)) &\to &\Out^\I(\kG(S))\times\Aut(A)\\
\;\downarrow\!\!{\scriptscriptstyle R_0}  & &\downarrow  \\
\Out^\I(\kG(S_{0,n+2g}^k,\dS))&\to & \Out^\I(\kG(S_{0,n+2g}^k))\times\Aut(A),
\end{array} \]
where $A:=\prod_{i=1}^k\tau_{\delta_i}^\ZZ$ identifies with the normal inertia subgroups of both groups
$\kG(S,\dS)$ and $\kG(S_{0,n+2g}^k,\dS)$. By (i) of Lemma~\ref{injectivehom}, we have that both horizontal homomorphisms
are injective. From the isomorphisms $\Out^\I(\kG(S))\cong\GT$ and $\Out^\I(\kG(S_{0,n+2g}^k))\cong\GT$
(cf.\ Section~\ref{LemmaImplies1}), it then follows that also the righthand vertical homomorphism is injective and so
the lefthand vertical homomorphism $R_0$ is injective too.
\end{proof}

Let $R_\emptyset\co\Out^{\I}(\kG(S,\dS))\to\Out^{\I}(\kG(S))$ be the natural homomorphism  defined in Section~\ref{autrelprocong}. We have:

\begin{lemma}\label{relinjgenus0}For $S=S_{0,n}^k$ with $n+k\geq 5$ and $k\geq 1$, the natural homomorphism 
$R_\emptyset\co\Out^{\I}(\kG(S,\dS))\to\Out^{\I}(\kG(S))$ is injective.
\end{lemma}

\begin{proof}By (ii) of Theorem~\ref{purevsfullboundary}, the group $\Out^{\I}(\kG(S,\dS))$ identifies with the 
centralizer of $\Inn(\hG(S,\dS))\left/\Inn(\hPG(S,\dS))\right.\cong\Sigma_n\times\Sigma_k$ in $\Out^{\I}(\kPG(S,\dS))$. 
This implies that the action of $\Out^{\I}(\kG(S,\dS))$ on $\prod_{i=1}^k\tau_{\d_i}^\ZZ$ preserves the procyclic subgroup 
$\tau_{\d_i}^\ZZ$, for $i=1,\ldots,k$. Moreover, since these subgroups are all conjugated by the action of $\Inn(\hG(S,\dS))$,
one of this action, say the one on $\tau_{\d_1}^\ZZ$, determines them all. From (i) of Lemma~\ref{injectivehom}, 
it then follows that there is a natural monomorphism:
\begin{equation}\label{monoproduct}
\Out^{\I}(\kG(S,\dS))\hookra\Aut(\tau_{\d_1}^\ZZ)\times\Out^{\I}(\kG(S)).
\end{equation}

By the remarks above, the outer action of $\Out^{\I}(\kG(S,\dS))$ on $\kPG(S,\dS)$ 
(cf.\ item (ii) of Theorem~\ref{purevsfullboundary}) preserves the kernel of the epimorphism $\kPG(S,\dS)\to\kPG(S',\dS')$, where
we let $S':=S_{0,n+k-1}^1$. Therefore, the latter epimorphism induces a homomorphism $\Out^{\I}(\kG(S,\dS))\to\Out^{\I}(\kPG(S',\dS'))$ and then,
by (ii) of Theorem~\ref{purevsfullboundary}, a homomorphism:
\begin{equation}\label{onecomponent}
\Out^{\I}(\kG(S,\dS))\to\Out^{\I}(\kG(S',\dS')).
\end{equation}

Let us observe that the component $\Out^{\I}(\kG(S,\dS))\to\Aut(\tau_{\d_1}^\ZZ)$ of the monomorphism~(\ref{monoproduct})  
factors through the homomorphism~(\ref{onecomponent}). By Lemma~\ref{MinaNaka}, this is then determined by the character 
$\chi_\l\co\GT\to\ZZ^\ast$. Hence, since, by the proof of item (i) of Theorem~\ref{GT=Out} in Section~\ref{LemmaImplies1}, 
we already know that $\Out^{\I}(\kG(S))\cong\GT$, it follows that the component $\Out^{\I}(\kG(S,\dS))\to\Out^{\I}(\kG(S))$ of the 
monomorphism~(\ref{monoproduct}) is injective. 
\end{proof}

For $S=S_{g,n}^k$ and $d(S)>1$, by Lemma~\ref{indrelinj}, Lemma~\ref{relinjgenus0} and the results of Section~\ref{LemmaImplies1}, 
there is a natural monomorphism:
\[R_\emptyset\circ R_0\co\Out^\I(\kG(S,\dS))\hookra\Out^\I(\kG(S_{0,n+2g}^k))\cong\GT.\]
Thus, by Lemma~\ref{GTreprcong}, we get a series of natural monomorphisms:
\[\GT\hookra\Out^{\I}(\kG(S,\dS))\hookra\GT,\]
from which we conclude that $\Out^{\I}(\kG(S,\dS))\cong\GT$. Item (iv) of Theorem~\ref{GT=Out} follows from the proof above as well.

\subsection{The proof of Lemma~\ref{GTreprcong}}\label{postponed}
In the following sections, we will first prove a version of Lemma~\ref{GTreprcong} for profinite mapping class groups in genus $\leq 2$.
By the congruence subgroup property in genus $\leq 2$, this will complete the proof of Lemma~\ref{GTreprcong} and thus of 
Theorem~\ref{GT=Out} for the genus $\leq 2$ case. 

For higher genus, we will need first to answer an old open question in Grothendieck-Teichm\"uller theory, namely whether there exists 
a natural $\GT$-action on the profinite mapping class group associated to every hyperbolic surface.

Let $\Aut^{\I_0}(\hPG(S))$ be the closed subgroup of $\Aut(\hPG(S))$ consisting of those elements which preserve the conjugacy 
class of a procyclic subgroup (topologically) generated by a Dehn twists about a nonseparating simple closed curve
and by $\Out^{\I_0}(\hPG(S))$ its quotient by the subgroup of inner automorphisms. We will prove that:

\begin{theorem}\label{GTrepr}For $S$ a hyperbolic surface such that $d(S)>1$, there is a natural faithful representation: 
\[\hat{\rho}_{\GT}\co\GT\hookra\Out^{\I_0}(\hPG(S,\dS)).\]
This outer representation restricts to an action by automorphisms on the central inertia group $\prod_{i=1}^k\tau_{\d_i}^\ZZ$ of $\hPG(S,\dS)$ 
with the property that each procyclic subgroup $\tau_{\d_i}^\ZZ$ is preserved and acted upon through the character $\chi_\l\co\GT\to\ZZ^\ast$, 
for $i=1,\ldots,k$.
\end{theorem}

The proof of Lemma~\ref{GTreprcong} will then be obtained as a corollary of Theorem~\ref{GTrepr} (or better a corollary of its proof).

\subsection{Profinite hyperelliptic mapping class groups}\label{hypsection}
A \emph{hyperelliptic hyperbolic surface} $(S,\u)$ is the datum of a hyperbolic surface $S$ and a hyperelliptic involution $\u$ on $S$,
that is to say, an involution such that the quotient surface $S_\u:=S/\langle\u\rangle$ has genus $0$.
The \emph{hyperelliptic mapping class group $\U(S)$} is then defined to be the centralizer of $\u$ in the mapping class group $\G(S)$. 
For $\cP$ a finite subset of $S$ and $\vec\cP$ the same subset with a fixed order of its elements, we then let $\G(S,\cP)\cong\G(S\ssm\cP)$ 
be the mapping class group of the marked surface $(S,\cP)$ and $\G(S,\vec\cP)$ be the kernel of the natural representation $\G(S,\cP)\to\Sigma_\cP$.

The \emph{hyperelliptic mapping class groups $\U(S,\cP)$ and $\U(S,\vec\cP)$} are, respectively, the inverse images of 
$\U(S)$ by the epimorphisms $\G(S,\cP)\to\G(S)$ and $\G(S,\vec\cP)\to\G(S)$. 

The \emph{profinite hyperelliptic mapping class groups} $\hU(S,\cP)$ and $\hU(S,\vec\cP)$ are then, respectively, the profinite completions 
of $\U(S,\cP)$ and $\U(S,\vec\cP)$. 
The congruence topology on $\G(S,\cP)\cong\G(S\ssm\cP)$ induces a profinite topology on $\U(S,\cP)$ and $\U(S,\vec\cP)$ via the embeddings
$\U(S,\vec\cP)\subseteq\U(S,\cP)\subseteq\G(S,\cP)$ also called the congruence topology. By \cite[Theorem~7.2]{HyperNotes}, this
topology coincides with the full profinite topology. Hence, in the sequel, we will identify profinite and procongruence hyperelliptic mapping class groups.

\subsection{The complex of symmetric profinite curves}\label{symcomplex}
A \emph{symmetric multicurve} on a marked hyperelliptic surface $(S,\u,\cP)$ is a multicurve on $S\ssm\cP$ whose isotopy class in $S$
is preserved by the hyperelliptic involution $\u$. The \emph{complex of symmetric curves} $C(S,\u,\cP)$ is the (abstract) simplicial
complex whose simplices are the symmetric multicurves on $(S,\u,\cP)$ (cf.\ \cite[Definition~2.7]{HyperNotes}). 
This is clearly a full subcomplex of the curve complex $C(S\ssm\cP)$. We then define the \emph{complex of symmetric profinite curves} 
$\hC(S,\u,\cP)$ on $(S,\u,\cP)$ to be the (abstract) simplicial profinite complex whose (profinite) set of $k$-simplices is the closure of
$C(S,\u,\cP)_k$ inside $\kC(S\ssm\cP)_k$ (cf.\ \cite[Definition~7.11]{HyperNotes}, where this complex is denoted by $L(\hP(S\ssm\cP),\u)$).

Simplices of $C(S,\u,\cP)$ parameterize the multitwists of the hyperelliptic mapping class group $\U(S,\cP)$ as well as the abelian subgroups 
generated by Dehn twists. In \cite[Section~7.7]{HyperNotes}, we showed that the simplices of $\hC(S,\u,\cP)$ parameterize profinite 
multitwists in the profinite hyperelliptic mapping class group $\hU(S,\cP)$.
In particular, for $U$ an open subgroup of $\hU(S,\cP)$ (which is then a closed subgroup of $\kG(S\ssm\cP)$), the nontrivial decomposition
and inertia groups $U_\s$ and $\hI_\s(U)$ of $U$ (cf.\ Definition~\ref{DecPres} and Definition~\ref{starcondition}) are parameterized by $\hC(S,\u,\cP)$.
The same arguments of the proofs of Proposition~7.2 and \cite[Theorem~7.3, (ii)]{BF} show that there is a natural continuous 
homomorphism $\Aut^\D(U)\to\Aut(\hC(S,\u,\cP))$ whose kernel identifies with the abelian group $\Hom(U/Z(U),Z(U))$. The usual argument then shows
that this has trivial intersection with the subgroup $\Aut^\I(U)$ of $\Aut^\D(U)$, so that there is a monomorphism $\Aut^\I(U)\hookra\Aut(\hC(S,\u,\cP))$.

It is likely that a version of \cite[Theorem~5.5]{BF} holds in the hyperelliptic case. This would show,
in particular, that the $\I$-automorphisms of $U$ preserve the topological types of the inertia groups $\hI_\s(U)$, for $\s\in\hC(S,\u,\cP)$.
Since we will not need this result in the sequel, we instead give the following definition:

\begin{definition}\label{starconditionhyp}For $H$ a closed subgroup of $\hU(S,B)$, we let $\Aut^{\I_\mathrm{top}}(H)$ 
be the closed subgroup of $\Aut^\I(H)$ consisting of those elements which preserve the topological types of the inertia groups of $H$. 
We then say that a subgroup $K$ of $H$ is \emph{$\I_\mathrm{top}$-characteristic} if it is preserved by $\Aut^{\I_\mathrm{top}}(H)$.
Note that $\Inn(U)\leq\Aut^{\I_\mathrm{top}}(U)$, for $U$ an open subgroup of $\hU(S,\cP)$, so that, in particular, an 
$\I_\mathrm{top}$-characteristic subgroup of $U$ is normal.
\end{definition}

For the closed surface case, we have the following result: 

\begin{proposition}\label{outhyp}For $S$ a closed surface of genus $g\geq 2$, endowed with a hyperelliptic involution $\u$, 
there is a natural isomorphism $\Out(\hU(S))\cong\{\pm 1\}\times\Out^\I(\hU(S))$ and a series of natural isomorphisms:
\begin{equation}\label{GThypiso}
\Out^\I(\hU(S))=\Out^{\I_0}(\hU(S))\cong\Out(\hG_{0,[2g+2]})\cong\GT.
\end{equation}
In particular, we have $\Aut^\I(\hU(S))=\Aut^{\I_\mathrm{top}}(\hU(S))$.
\end{proposition}

\begin{proof}By \cite[Lemma~7.4]{BF}, there is an exact sequence:
\[1\to\Hom(\hU(S)/Z(\hU(S)),Z(\hU(S)))\to\Out(\hU(S))\to\Out(\hU(S)/Z(\hU(S))),\]
where $\hU(S)/Z(\hU(S))\cong\hG_{0,[2g+2]}$ and the center $Z(\hU(S))$ is generated by the hyperelliptic involution $\u$. 
Since the abelianization of $\hG_{0,[2g+2]}$ is isomorphic to $\Z/2(2g+1)$ (cf.\ \cite[\S~5.1.3]{FM}), 
we have that $\Hom(\hU(S)/Z(\hU(S)),Z(\hU(S)))\cong\{\pm 1\}$. 

By the usual argument, the image of $\Hom(\hU(S)/Z(\hU(S)),Z(\hU(S)))$ in $\Out(\hU(S))$ has trivial intersection with the subgroups
$\Out^\I(\hU(S))$ and $\Out^{\I_0}(\hU(S))$ of the latter group. 

Since $\hU(S)/Z(\hU(S))=\hG_{0,[2g+2]}$, from Lemma~\ref{decpreservation} and \cite[Lemma~6.7]{BF2}, it then follows that
the image of $\Out(\hU(S))$ in $\Out(\hU(S)/Z(\hU(S)))$ identifies with the images there of both its subgroups $\Out^\I(\hU(S))$ and $\Out^{\I_0}(\hU(S))$. 
This implies that there is an isomorphism $\Out(\hU(S))\cong\{\pm 1\}\times\Out^\I(\hU(S))$ and also the identity $\Out^\I(\hU(S))=\Out^{\I_0}(\hU(S))$.

Let $(S^\circ,\u)$ be the hyperelliptic surface with boundary obtained from the hyperelliptic surface $(S,\u)$ by replacing one of the Weierstrass points 
with a circle. The Birman-Hilden theorem states that there is a natural isomorphism $Z_{\G(S^\circ,\dd S^\circ)}(\u)\cong B_{2g+1}$, where $B_{2g+1}$ 
is Artin braid group. Moreover, if $z$ is the standard generator of the (cyclic) center of $B_{2g+1}$, this isomorphism induces an isomorphism
$\U(S)\cong  B_{2g+1}/\langle z^2\rangle$ (cf.\ \cite[Chapter~9]{FM}).

In \cite[Appendix~4]{Ihara}, Ihara showed that there is an action of the profinite Grothendieck-Teichm\"uller group $\GT$ 
on the profinite completion $\wh{B}_n$ of the Artin braid group $B_n$, for all $n\geq 3$, which preserves the conjugacy class of the procyclic 
subgroups generated by the standard braids. The Birman-Hilden theorem then implies that there is a representation
$\GT\to\Out^{\I_0}(\hU(S))$ compatible with the natural monomorphism $\Out^{\I_0}(\hU(S))\hookra\Out(\hU(S)/Z(\hU(S)))$ and the isomorphism
$\Out(\hU(S)/Z(\hU(S)))\cong\GT$. This yields the series of isomorphisms~\eqref{GThypiso}.
The last claim then follows from the isomorphism $\Out^\I(\hU(S))\cong\Out(\hG_{0,[2g+2]})$ and Lemma~\ref{decpreservation}.
\end{proof}

\subsection{Modular subgroups of the hyperelliptic mapping class group}
The definition of hyperelliptic mapping class groups makes sense also when $(S,\u,\cP)$ is a marked disconnected hyperelliptic surface 
(cf.\ \cite[Section~2.8 and 2.9]{HyperNotes}). For instance, for $\g$ a symmetric simple closed curve on $(S,\u,\cP)$, the marked 
hyperelliptic surface $(S\ssm\g,\u,\cP)$ is a possibly disconnected marked hyperelliptic surface.
Let us label by $Q_+,Q_-$ the pair of punctures on $S\ssm\g$ bounded by $\g$ and denote by 
$\U(S\ssm\g,\cP)_{\{Q_+,Q_-\}}$ the stabilizer of this pair of punctures for the action of $\U(S\ssm\g,\cP)$ on the set of all punctures of $S\ssm\g$.
The stabilizer $\U(S,\cP)_\g$ for the action of the hyperelliptic mapping class group $\U(S,\cP)$ on the set $C(S,\u,\cP)_0$ 
is then described by the short exact sequence (cf.\ the short exact sequence~(14) in \cite{HyperNotes}):
\[1\to\tau_\g^\Z\to\U(S,\cP)_\g\to\U(S\ssm\g,\cP)_{\{Q_+,Q_-\}}\to 1.\]
A similar short exact sequence also describes the stabilizer $\U(S,\vec\cP)_\g$.

In \cite[Section~7.6]{HyperNotes}, the above description of stabilizers is extended to profinite hyperelliptic mapping class group.
More precisely, for $\g$ a symmetric simple closed curve on $(S,\u,\cP)$, which we now regard as an element of the profinite set $\hC(S,\u,\cP)_0$,
the stabilizer $\hU(S,\cP)_\g$ for the action of $\hU(S,\cP)$ on $C(S,\u,\cP)_0$  is described by the short exact sequence 
(cf.\ \cite[Corollary~7.8]{HyperNotes}):
\[1\to\tau_\g^\ZZ\to\hU(S,\cP)_\g\to\hU(S\ssm\g,\cP)_{\{Q_+,Q_-\}}\to 1,\]
where $\hU(S,\cP)_\g$ and $\hU(S\ssm\g,\cP)_{\{Q_+,Q_-\}}$ are naturally isomorphic to the profinite completions 
of the groups $\U(S,\cP)_\g$ and $\U(S\ssm\g,\cP)_{\{Q_+,Q_-\}}$, respectively. Again, a similar description holds 
for the stabilizer $\hU(S,\vec\cP)_\g$.

For the applications in this paper, we are more interested in the case when $\g$ is a nonseparating simple closed curve and the set of marked
points $\cP$ is ordered. Therefore, we will give a more precise description of the stabilizer in this case.

For $S$ a closed surface and $\g$ a nonseparating symmetric simple closed curve on $(S,\u,\cP)$, let $S_\g:=S\ssm\g$ 
and let $\U(S_\g,\vec\cP)_\circ$ be the index $2$ subgroup of $\U(S_\g,\vec\cP)$ consisting of those elements 
which do not swap the punctures of $S_\g$ labeled by $Q_+$ and $Q_-$ (this is also described as the kernel 
of the natural representation $\U(S_\g,\vec\cP)\to\Sigma_{\{Q_+,Q_-\}}$). The stabilizer
$\U(S,\vec\cP)_{\vec\g}$ of the oriented simple closed curve $\vec\g$ is described by the short exact sequence:
\[1\to\tau_\g^\Z\to\U(S,\vec\cP)_{\vec\g}\to\U(S_\g,\vec\cP)_\circ\to 1.\]

Let $N_\g$ be the normal subgroup of $\U(S_\g,\vec\cP)_\circ$ generated by the Dehn twists about symmetric separating simple closed curves on 
$(S_\g,\u,\cP)$ bounding a disc which contains no marked points and only the two punctures labeled by $Q_+$ and $Q_-$ and let $\ol{S}_\g$ be 
the surface obtained from $S_\g$ by filling in the the punctures on $S_\g$ labeled by $Q_+$ and $Q_-$ with two points (which we also label by 
$Q_+$ and $Q_-$). The group $\U(S_\g,\vec\cP)_\circ$ then fits in the short exact sequence (cf.\ \cite[Theorem~3.1]{HyperNotes}):
\[1\to N_\g\to\U(S_\g,\vec\cP)_\circ\to\U(\ol{S}_\g,\ovr{\cP\cup Q_+})\to 1.\]
From the above results, it follows:

\begin{proposition}\label{hyperellipticstab}Let $(S,\u,\cP)$ be a marked hyperelliptic hyperbolic surface and $\g$ 
a symmetric nonseparating simple closed curve on $(S,\u,\cP)$. With the above notations, we have:
\begin{enumerate}
\item The stabilizer $\hU(S,\vec\cP)_\g$ for the action of $\hU(S,\vec\cP)$ on the profinite set $\hC(S,\u,\cP)_0$ 
is described by the two short exact sequences:
\[\begin{array}{ll}
&1\to\hU(S,\vec\cP)_{\vec\g}\to\hU(S,\vec\cP)_\g\sr{\epsilon}{\to}\{\pm 1\}\to 1\\
\mbox{and }&\\
&1\to\tau_\g^\ZZ\to\hU(S,\vec\cP)_{\vec\g}\to\hU(S_\g,\vec\cP)_\circ\to 1,
\end{array}\]
where $\epsilon$ is the orientation character associated to the curve $\g$ and $\hU(S_\g,\vec\cP)_\circ$ is the kernel of the natural 
representation $\hU(S_\g,\vec\cP)\to\Sigma_{\{Q_+,Q_-\}}$.  
\item The profinite group $\hU(S_\g,\vec\cP)_\circ$ fits in the short exact sequence:
\[1\to \ol{N}_\g\to\hU(S_\g,\vec\cP)_\circ\to\hU(\ol{S}_\g,\ovr{\cP\cup Q_+})\to 1,\]
where $\ol{N}_\g$ is the closure of the group $N_\g$ in the profinite group $\hU(S_\g,\cP)$.
\end{enumerate}
\end{proposition}

\subsection{Homomorphisms between outer automorphism groups of profinite hyperelliptic mapping class groups}
Let $(S,\u,\cP)$ be a marked hyperelliptic closed surface of genus $g\geq 2$ and $\g$ a symmetric nonseparating simple closed curve on $(S,\u,\cP)$.
The subgroup $\hU(S,\vec\cP)_{\vec\g}$ of $\hU(S,\vec\cP)_\g$ is $\I_\mathrm{top}$-characteristic.
By the same construction of the homomorphism~(\ref{stabrestriction}), there is then a natural homomorphism:
\[\Aut^{\I_\mathrm{top}}(\hU(S,\vec\cP))\to\Out^{\I_\mathrm{top}}(\hU(S,\vec\cP)_{\vec\g}).\]

From \cite[Theorem~7.13]{HyperNotes}, for $g\geq 2$, it follows that $Z(\hU(S_\g,\vec\cP)_\circ)=\{1\}$. Hence,
by (i) of Proposition~\ref{hyperellipticstab}, we have $Z(\hU(S,\vec\cP)_{\vec\g}))=\tau_\g^\ZZ$ and there is a natural homomorphism:
\[\Out^{\I_\mathrm{top}}(\hU(S,\vec\cP)_{\vec\g})\to\Out^{\I_\mathrm{top}}(\hU(S_\g,\vec\cP)_{\circ})\] 
By composing the two above homomorphisms, we then get the homomorphism:
\begin{equation}\label{hyprestriction2}
\Aut^{\I_\mathrm{top}}(\hU(S,\vec\cP))\to\Out^{\I_\mathrm{top}}(\hU(S_\g,\vec\cP)_{\circ}).
\end{equation}

The subgroup $\ol{N}_\g$ of $\hU(S_\g,\vec\cP)_{\circ}$ (cf.\ (ii) of Proposition~\ref{hyperellipticstab}) is (topologically) generated by 
the profinite Dehn twists about symmetric profinite curves on $(S_\g,\u,\cP)$ of topological type a simple closed curve bounding a disc which
only contains the two punctures labeled by $Q_+$ and $Q_-$. Therefore, $\ol{N}_\g$ is an ${\I_\mathrm{top}}$-characteristic 
subgroup of $\hU(S_\g,\vec\cP)_{\circ}$ and there is
a natural homomorphism $\Out^{\I_\mathrm{top}}(\hU(S_\g,\vec\cP)_\circ)\to\Out^{\I_\mathrm{top}}(\hU(\ol{S}_\g,\ovr{\cP\cup Q_+}))$. 
Composing the latter with the homomorphism~(\ref{hyprestriction2}), we get a natural homomorphism:
\[\Aut^{\I_\mathrm{top}}(\hU(S,\vec\cP))\to\Out^{\I_\mathrm{top}}(\hU(\ol{S}_\g,\ovr{\cP\cup Q_+})).\]
It is not difficult to see that the kernel of this homomorphism contains $\Inn(\hU(S,\vec\cP))$. Therefore, there is a natural homomorphism:
\begin{equation}\label{hyprestriction3}
\Out^{\I_\mathrm{top}}(\hU(S,\vec\cP))\to\Out^{\I_\mathrm{top}}(\hU(\ol{S}_\g,\ovr{\cP\cup Q_+})).
\end{equation}

\begin{proposition}\label{GThypaction}For $(S,\u,\cP)$ a marked hyperelliptic closed surface of genus $g\geq 2$,
there is a natural representation: 
\[\Phi_{(S,\vec\cP)}\co\GT\to\Out^{\I_\mathrm{top}}(\hU(S,\vec\cP)).\]
\end{proposition}

\begin{proof}Let $n:=\sharp \cP$ and let $\s:=\{\g_1,\ldots,\g_n\}$ be a set of disjoint symmetric closed curves on a hyperelliptic closed surface $(S',\u)$ 
of genus $g+n$ such that $S'_\s:=S'\ssm\s$ is connected. Then, $(S'_\s,\u)$ is a hyperelliptic surface endowed with $n$ pairs of symmetric punctures 
(with respect to the hyperelliptic involution $\u$). Let $(S,\u)$ be the hyperelliptic closed surface obtained filling the $2n$ punctures on $S'_\s$ with the
set of points $\cP\cup\u(\cP)$. Then, composing the $n$ homomorphism of type~(\ref{hyprestriction3}) associated, recursively, to the set of disjoint 
symmetric closed curves $\s$ (note that each step adds a marked point out of the set $\cP$), by Proposition~\ref{outhyp}, we get the representation:
\[\Phi_{(S,\vec\cP)}\co\GT\cong\Out^{\I_\mathrm{top}}(\hU(S'))\to\Out^{\I_\mathrm{top}}(\hU(S,\vec\cP)).\]
\end{proof}

\begin{corollary}\label{GTgenus2action}For $S=S_{2,n}$ and all $n\geq 0$, there is a natural representation: 
\[\Psi_{S}\co\GT\to\Out^\I(\hG(S)).\]
\end{corollary}

\begin{proof}Note that, for $S'$ a closed surface of genus $2$ and $\cP$ a set of distinct $n$ points on $S'$, we have 
$\hPG(S'\ssm\cP)\cong\hU(S',\vec\cP)$. Let then $S:=S'\ssm\cP$ and compose the homomorphism $\Phi_{(S',\vec\cP)}$ of Proposition~\ref{GThypaction}
with the homomorphism $\Out^{\I_\mathrm{top}}(\hPG(S))=\Out^\I(\hPG(S))\to\Out^\I(\hG(S))$ 
(cf.\ (ii) of Proposition~\ref{0simplices} and (ii) of Theorem~\ref{compurevsfull}).
\end{proof}

\subsection{Proof of Lemma~\ref{GTreprcong} for $g(S)\leq 2$ and $\dS=\emptyset$}\label{proofGTreprcong}
The genus $2$ case was already treated in Corollary~\ref{GTgenus2action}.
The genus $0$ case, for $n(S)\geq 5$, follows from Proposition~\ref{purevsfullgenus0} and Lemma~\ref{decpreservation}. 
The case $g(S)=0$ and $n(S)=4$ then follows from (ii) of Theorem~\ref{functorialinj}. 

For the genus $1$ case, let $S:=S_{2,n}$ and $S':=S_{1,n+1}$, with $n\geq 0$. By Theorem~\ref{functorialhom}, there is then a natural 
injective homomorphism $\mu_{S,S'}\co\Out^{\I}(\hG(S))\hookra\Out^{\I}(\hG(S'))$,
which, precomposed with $\Psi_S$ (cf.\ Corollary~\ref{GTgenus2action}) yields the homomorphism
$\Psi_{(S',\emptyset)}\co\GT\to\Out^{\I}(\hG(S'))$.

\subsection{Proof of Lemma~\ref{GTreprcong} for $g(S)\leq 2$ and $\dS\neq\emptyset$}\label{withboundary}
Let $S\subset \tS\cong S_{g,n+2k}$ and $\s=\{\delta_1,\ldots,\delta_k\}$ be as in Definition~\ref{tildedS}. 
The same construction as that of the homomorphism~(\ref{stabrestriction3}) provides a natural homomorphism:
\[PR_{\vec\s}\co\Out^\I(\hPG(\tS))\to\Out^\I(\hPG(\tS)_{\vec\s}).\]
After identifying $\PG(S,\dS)$ with $\PG(\tS)_{\vec\s}$, we then get a natural homomorphism:
\[PR_{(S,\dS)}\co\Out^\I(\hPG(\tS))\to\Out^\I(\hPG(S,\dS)).\]
Composing $PR_{(S,\dS)}$ with the natural homomorphism $\Out^\I(\hPG(S,\dS))\to\Out^\I(\hG(S,\dS))$ (cf.\ Theorem~\ref{purevsfullboundary}),
we get a homomorphism $\Out^\I(\hPG(\tS))\to\Out^\I(\hG(S,\dS))$, whose kernel contains 
$\Inn(\G(\tS))\left/\Inn(\PG(\tS))\right.=\Inn(\hG(\tS))\left/\Inn(\hPG(\tS))\right.$ and thus, 
by Theorem~\ref{compurevsfull}, induces a natural homomorphism:
\begin{equation}\label{restrwithboundary}
R_{(S,\dS)}\co\Out^{\I}(\hG(\tS))\to\Out^{\I}(\hG(S,\dS)).
\end{equation}
For $g(\tS)\leq 2$, precomposing $R_{(S,\dS)}$ with the representation $\Psi_{(\tS,\emptyset)}$ obtained in Section~\ref{proofGTreprcong}, 
we finally get the representation:
\[\Psi_{(S,\dS)}\co\GT\to\Out^{\I}(\hG(S,\dS)).\]

\subsection{Proof of Theorem~\ref{GTrepr} for $g(S)\leq 2$}\label{proofcomplete}
This, in particular, follows from the case $g(S)\leq 2$ of Theorem~\ref{GT=Out}, which, as we showed in Section~\ref{LemmaImplies2}, in its turn,
follows from the case $g(S)\leq 2$ of Lemma~\ref{GTreprcong} proved in Section~\ref{proofGTreprcong} and Section~\ref{withboundary} above.

\subsection{A presentation for the profinite relative mapping class group of a surface}
In \cite{Gervais}, Gervais gave a finite presentation of the pure mapping class group of a surface with boundary. In this section, we will show
that this result implies that the profinite pure mapping class group of an open surface admits a presentation as a quotient of a profinite 
amalgamated product of stabilizers of oriented nonseparating simple closed curves by a few simple commutator relations. 

For a finite set of groups $G_1,\ldots, G_k$ and monomorphisms $H\hookra G_i$, for $i=1,\ldots,k$, we denote by 
$\bigoasterisk_H^{i=1,\ldots k}G_i$ their amalgamated free product with amalgamated subgroup $H$.
If the groups $G_1,\ldots, G_k,H$ are profinite and the monomorphisms $H\hookra G_i$ continuous, we denote by 
$\coprod_H^{i=1,\ldots k}G_i$ their amalgamated free profinite product with amalgamated subgroup $H$
(cf.\ \cite[Section~9.2]{RZ}). With the above notations, we have:

\begin{proposition}\label{amalgamatedproduct}Let $S=S_{g}^k$ with $k\geq1$ and $g\geq 3$, let $\s=\{\g_1,\ldots,\g_{g-1}\}$ be a set 
of disjoint simple closed curves on $S$ such that $S\ssm\s$ is connected and $\s'=\{\beta_1,\ldots,\beta_{g-1}\}$ a set of disjoint simple closed 
curves such that $S\ssm\s'$ is connected and the intersection $\beta_i\cap\g_j$ is a single point for $i=j$ and empty otherwise. 
Let $\s_i:=\s\ssm\{\g_i\}$, for $i=1,\ldots,g-1$. 

There is then a natural epimorphism:
\[\Theta\co\bigoasterisk_{\PG(S,\dS)_{\vec\s}}^{i=1,\ldots,g-1}\PG(S,\dS)_{\vec{\s}_i}\twoheadrightarrow\PG(S,\dS),\]
whose kernel is normally generated by the commutators $[\tau_{\beta_i},\tau_{\beta_j}]$, for $i,j=1,\ldots,g-1$.
\end{proposition}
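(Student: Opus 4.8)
The plan is to derive Proposition~\ref{amalgamatedproduct} from Gervais's finite presentation of the pure mapping class group of a surface with boundary \cite{Gervais}, which realizes $\PG(S,\dS)$ as the group generated by the Dehn twists about a suitable finite family $\Gamma$ of simple closed curves, subject only to relations each supported on a subsurface of bounded type (braid relations for curves meeting once, commutation relations for disjoint curves, and the remaining chain and "star" relations). Since the topological type of the pair $(\s,\s')$ is prescribed by the hypotheses, the change of coordinates principle lets me work with one convenient model: present $S=S_g^k$ as a sphere with $k\geq 1$ holes and $g$ handles, let $\g_i$ and $\beta_i$ be the meridian and the longitude of the $i$-th handle for $i=1,\ldots,g-1$, and choose the remaining members of $\Gamma$ — the meridian and longitude of the last handle, the curves joining consecutive handles (drawn so as to cross longitudes, not meridians), and curves around the boundary — so that the only members of $\Gamma$ meeting some $\g_j$ are the $\beta_j$. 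Then $\tau_c\in\PG(S,\dS)_{\vec\s}$ for every $c\in\Gamma$ with $c\neq\beta_i$, whereas $\tau_{\beta_i}\in\PG(S,\dS)_{\vec{\s}_i}$ since $\beta_i$ is disjoint from $\s_i$. As $\s_i\cup\s_j=\s$ gives $\PG(S,\dS)_{\vec{\s}_i}\cap\PG(S,\dS)_{\vec{\s}_j}=\PG(S,\dS)_{\vec\s}$ for $i\neq j$, the inclusions $\PG(S,\dS)_{\vec{\s}_i}\hookra\PG(S,\dS)$ agree on the amalgamated subgroup, so by the universal property of the amalgamated product they define $\Theta$; and $\Theta$ is surjective because its image already contains the twists about every member of $\Gamma$. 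Naturality is evident from the construction.

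For the kernel, note first that $[\tau_{\beta_i},\tau_{\beta_j}]\in\ker\Theta$ for $i\neq j$: the $\beta_i$ being pairwise disjoint these twists commute in $\PG(S,\dS)$, while in the amalgam $\tau_{\beta_i}$ and $\tau_{\beta_j}$ lie in distinct free factors. Write $A$ for the amalgamated product and $N\lhd A$ for the normal closure of these commutators, so that $\Theta$ factors through $\bar\Theta\co A/N\to\PG(S,\dS)$; it remains to invert $\bar\Theta$. I would define $\Phi\co\PG(S,\dS)\to A/N$ by sending a generator $\tau_c$ of Gervais's presentation to the image in $A/N$ of its canonical copy in $A$ — the copy lying in the amalgamated subgroup $\PG(S,\dS)_{\vec\s}$ when $c\neq\beta_i$, and the copy in the factor $\PG(S,\dS)_{\vec{\s}_i}$ when $c=\beta_i$ — and check that every defining relation of Gervais's presentation maps to $1$ in $A/N$. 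Indeed, each such relation is supported on a bounded subsurface, which in our model is disjoint from $\s$ unless the relation involves a twist about a curve meeting some $\g_j$, i.e.\ some $\beta_j$; by the choice of model it then involves exactly one $\beta_j$ (two distinct ones being disjoint) together only with curves disjoint from $\s_j$, so it holds already inside $\PG(S,\dS)_{\vec{\s}_j}\subseteq A$ — the sole exceptions being the relations $[\tau_{\beta_i},\tau_{\beta_j}]=1$, which hold in $A/N$ by definition of $N$. Hence $\Phi$ is well-defined; $\bar\Theta\circ\Phi=\mathrm{id}$ is clear on generators; and $\Phi\circ\bar\Theta=\mathrm{id}$ because $A/N$ is generated by the images of the factors $\PG(S,\dS)_{\vec{\s}_i}$ and, for each $i$, the members of $\Gamma$ disjoint from $\s_i$ generate $\PG(S,\dS)_{\vec{\s}_i}$ — this stabilizer being generated, by Remark~\ref{identify} and the exact sequences of Subsection~\ref{stabmulticurves}, by the $\tau_{\g_j}$ with $j\neq i$ together with the twists about curves disjoint from $\s_i$.

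The heart of the proof, and the main obstacle, is the combinatorial bookkeeping hidden in the choice of model: one must exhibit a curve family $\Gamma$ underlying a Gervais-type presentation of $\PG(S,\dS)$ with (i) $\s\cup\s'\subseteq\Gamma$, (ii) the $\beta_i$ the only members of $\Gamma$ meeting any $\g_j$ — so that every defining relation other than the commutations $[\tau_{\beta_i},\tau_{\beta_j}]$ is supported away from at least one $\g_j$, or from all of them — and (iii) for each $i$, the members of $\Gamma$ disjoint from $\s_i$ generating the whole stabilizer $\PG(S,\dS)_{\vec{\s}_i}$. Verifying that Gervais's system can be arranged in this way, and in particular that no chain or star relation ever links two different $\beta_i$, is routine but delicate; it is also the only point at which the hypotheses $g\geq 3$ and $k\geq 1$ enter, guaranteeing that the cut surfaces $S\ssm\s_i\cong S_2^{2(g-2)+k}$ and the stabilizers involved are of the expected topological type. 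Completing the resulting presentation then yields the profinite statement of Corollary~\ref{proamalgamatedproduct}.
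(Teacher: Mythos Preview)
Your proposal is correct and follows essentially the same route as the paper's proof: both derive the proposition directly from Gervais's presentation, the key observation being that every defining relation other than the commutations $[\tau_{\beta_i},\tau_{\beta_j}]$ is supported on some $S\ssm\s_i$ and hence already holds in one of the factors $\PG(S,\dS)_{\vec{\s}_i}$. The paper states this observation and leaves the bookkeeping implicit, whereas you spell out the construction of the inverse $\Phi$ and flag the verification that Gervais's curve system can be aligned with $(\s,\s')$ as the delicate step; note that in Gervais's actual system the $\g_i$ sit among his curves $(\gamma_{i,j})$ rather than being meridians in the naive sense, so your ``model'' is really a relabelling of Gervais's configuration after a change of coordinates, and the check that no star or chain relation links two distinct $\beta_i$ is most cleanly done directly in Gervais's coordinates.
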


\begin{proof}The proposition is a consequence of \cite[Theorem~1]{Gervais}. Let the curves $\{\beta_1,\ldots,\beta_{g-1}\}$
in the statement of the proposition correspond to the curves denoted with the same letters in \cite[Fig.\ 1]{Gervais}. Let instead $\g_i$
be a simple closed curve contained in the handle of the surface in \cite[Fig.\ 1]{Gervais} through which $\beta_i$ passes and such 
that $\g_i$ intersects $\beta_i$ in a single point, for $i=1,\ldots,g-1$. In particular, $\g_i$ does not intersect any other curve in the
set ${\mathscr G}_{g,n}$ defined in \cite{Gervais}, for $i=1,\ldots,g-1$.

Let us denote by $G_\Theta$ the quotient of the group $\bigoasterisk_{\PG(S,\dS)_{\vec\s}}^{i=1,\ldots,g-1}\PG(S,\dS)_{\vec{\s}_i}$
by the subgroup normally generated by the commutators $[\tau_{\beta_i},\tau_{\beta_j}]$, for $i,j=1,\ldots,g-1$.
Since the group $\PG(S,\dS)$ is generated by the union of the subgroups $\PG(S,\dS)_{\vec{\s}_i}$, 
for $i=1,\ldots,g-1$, we have that $\Theta$ is surjective and descends to an epimorphism $\ol{\Theta}\co G_\Theta\to\PG(S,\dS)$, 
such that its restriction to the subgroup $\PG(S,\dS)_{\vec{\s}_i}$ is injective for $i=1,\ldots,g-1$.

With the notations of \cite[Theorem~1]{Gervais} (except that our "$k$" corresponds to the "$n$" in Gervais' notation), 
let $F$ be the free group in the letters $b,b_1,\dots,b_{g-1}$, $a_1,\dots,a_{2g+k-2}$, $\{c_{i,j}\}$.
Sending these elements to the images of the corresponding Dehn twists in $G_\Theta$ then defines a homomorphism $\Eta\co F\to G_\Theta$. 
Since, with the exception of the "disjointness relations" $\tau_{\beta_i}\tau_{\beta_j}=\tau_{\beta_j}\tau_{\beta_i}$,  
all relations in the presentation given in \cite[Theorem~1]{Gervais} are supported on some subsurface $S\ssm\s_i$, 
for $i=1,\ldots,g-1$, of $S$, the homomorphism $\Eta$ induces a homomorphism $\ol{\Eta}\co\PG(S,\dS)\to G_\Theta$, such
that its restriction to the subgroup $\PG(S,\dS)_{\vec{\s}_i}$ is injective for $i=1,\ldots,g-1$.
In particular, $\ol{\Eta}$ is surjective and so the composition $\ol{\Theta}\circ\ol{\Eta}$ is also surjective. 
Since the mapping class group $\PG(S,\dS)$ is finitely generated and residually finite,
it is Hopfian, so that $\ol{\Theta}\circ\ol{\Eta}$ is an isomorphism. This implies that $\ol{\Theta}$ is an isomorphism as well.
\end{proof} 

\begin{remark}\label{gap}After a careful reading of the paper \cite{Gervais}, I noted a gap in the proof of \cite[Lemma~8]{Gervais} where
the author claims: "Pasting a pair of pants to $\g_{2g+n-3,1}$ allows us to view $\Sigma_{g,n-1}$ as a subsurface of $\Sigma_{g,n}$".
But this makes sense only for $n>1$, which means that the proof of \cite[Theorem~1]{Gervais} is complete 
only for $n\geq 1$. This is the reason why, in Proposition~\ref{amalgamatedproduct}, we assume $k\geq 1$.
\end{remark}

By the congruence subgroup property in genus $\leq 2$ and (ii) of Theorem~\ref{stabilizers}, the closures of the subgroups $\PG(S)_{\vec\s}$
and $\PG(S)_{\vec{\s}_i}$, for $i=1,\ldots,g-1$, in the profinite mapping class group $\hPG(S)$ identify with their respective profinite completions.
Therefore, by the universal property of amalgamated free profinite products (cf.\ \cite[Section~9.2]{RZ}), we have:

\begin{corollary}\label{proamalgamatedproduct}With the notations of Proposition~\ref{amalgamatedproduct}, there is a natural continuous epimorphism:
\[\wh{\Theta}\co\coprod_{\hPG(S,\dS)_{\vec\s}}^{i=1,\ldots,g-1}\hPG(S,\dS)_{\vec{\s}_i}\twoheadrightarrow\hPG(S,\dS),\]
whose kernel is (topologically) normally generated by the commutators $[\tau_{\beta_i},\tau_{\beta_j}]$, for $i,j=1,\ldots,g-1$.
\end{corollary}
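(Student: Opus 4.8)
The plan is to deduce Corollary~\ref{proamalgamatedproduct} from Proposition~\ref{amalgamatedproduct} by passing to profinite completions, using that the relevant subgroups are closed in $\hPG(S,\dS)$ and carry their full profinite topology. First I would record the following observations. Since $k\geq 1$, the surface $S=S_g^k$ has nonempty boundary and, by Remark~\ref{identify}, $\PG(S,\dS)\cong\PG(\tS)_{\vec\s_0}$ where $\tS=S_{g,2k}$ and $\s_0=\{\delta_1,\ldots,\delta_k\}$; the analogous identifications hold for $\PG(S,\dS)_{\vec\s}$ and $\PG(S,\dS)_{\vec\s_i}$ as stabilizers of larger simplices in $\PG(\tS)$. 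Because $g(\tS)=g$ could be $\geq 3$ we cannot invoke the congruence subgroup property directly; instead I would work with the \emph{profinite} (not procongruence) topology throughout, i.e.\ with $\hPG(S,\dS)$, for which the statement is purely about profinite completions of discrete groups and their subgroups. The one input we genuinely need is that the closure of each $\PG(S,\dS)_{\vec\s}$ and $\PG(S,\dS)_{\vec\s_i}$ in $\hPG(S,\dS)$ is isomorphic to its own profinite completion; this is where the sentence preceding the corollary is used, and for surfaces with boundary and $g\leq 2$ it follows from the congruence subgroup property together with (ii) of Theorem~\ref{stabilizers}, while in general one argues that these stabilizers are themselves separable in $\PG(S,\dS)$.

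The key steps, in order, are: (1) Apply the right exact (profinite completion) functor to the epimorphism $\Theta$ of Proposition~\ref{amalgamatedproduct}. Profinite completion is right exact on the category of groups, so we obtain a continuous epimorphism from the profinite completion of $\bigoasterisk_{\PG(S,\dS)_{\vec\s}}^{i}\PG(S,\dS)_{\vec\s_i}$ onto $\hPG(S,\dS)$. (2) Identify the profinite completion of the discrete amalgamated free product with the amalgamated free profinite product $\coprod_{\hPG(S,\dS)_{\vec\s}}^{i}\hPG(S,\dS)_{\vec\s_i}$. This uses the universal property of the profinite amalgamated product recalled from Section~9.2 in \cite{RZ}, together with step~(1)'s observation that the completions of the factors and of the amalgamated subgroup inject compatibly, so that the canonical map is an isomorphism; equivalently, the discrete amalgamated product is \emph{efficient}/the completion commutes with the amalgam in the appropriate sense. (3) Track the normal subgroup generated by the commutators $[\tau_{\beta_i},\tau_{\beta_j}]$: since profinite completion is right exact, $\ker\wh\Theta$ is the closed normal subgroup of the profinite amalgam (topologically) generated by the images of the $[\tau_{\beta_i},\tau_{\beta_j}]$, which gives exactly the asserted description.

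The main obstacle I anticipate is step~(2): establishing that the profinite completion of the discrete amalgamated free product $\bigoasterisk_{H}^{i}G_i$ really is the amalgamated free profinite product $\coprod_{\hat H}^{i}\hat G_i$, rather than some proper quotient of it. In general these can differ unless the amalgamated product is \emph{proper} (i.e.\ each $G_i\hookra \bigoasterisk_H G_i$ remains injective profinitely, and the amalgamated subgroup is closed), which in turn is guaranteed once $H$ is a \emph{separable} subgroup of each $G_i$ and of the whole amalgam, and the relevant induced topologies agree. Here the factors $G_i=\PG(S,\dS)_{\vec\s_i}$ are stabilizers of nonseparating multicurves and $H=\PG(S,\dS)_{\vec\s}$ is their common stabilizer; by (ii) of Theorem~\ref{stabilizers} (in the relative form Theorem~\ref{relativestabilizers}) these have explicit extension structures, and one checks separability using that Dehn-twist stabilizers are separable in mapping class groups — this is precisely the content invoked in the paragraph before the corollary. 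So the proof I would write amounts to: cite right-exactness of profinite completion, cite the universal property from \cite{RZ} for the amalgamated free profinite product, verify that the hypotheses (closedness of the amalgamated subgroup, injectivity of the factor maps) hold in $\hPG(S,\dS)$ using the congruence subgroup property in genus $\leq 2$ and Theorem~\ref{stabilizers}, and conclude that $\wh\Theta$ exists with the stated kernel.
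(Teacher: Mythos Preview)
Your approach is correct and matches the paper's: both pass from Proposition~\ref{amalgamatedproduct} to the profinite statement via the universal property of the amalgamated free profinite product from \cite{RZ}, with the essential input being that the closures of $\PG(S,\dS)_{\vec\s}$ and $\PG(S,\dS)_{\vec{\s}_i}$ in $\hPG(S,\dS)$ coincide with their profinite completions --- and this holds precisely because $S\ssm\s$ has genus~$1$ and each $S\ssm\s_i$ has genus~$2$, so the congruence subgroup property in genus~$\leq 2$ together with (ii) of Theorem~\ref{stabilizers} applies directly, with no further separability argument needed. Your step~(2) is less of an obstacle than you fear: once the maps $\hat H\to\hat G_i$ are in place, the identification of the profinite completion of the discrete amalgam with the profinite amalgam is formal from the two universal properties, and the kernel description then follows from right exactness exactly as you outline.
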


\subsection{Proof of Theorem~\ref{GTrepr} for $S=S_{g}^k$, with $k\geq1$ and $g\geq 3$}\label{compactcase}
Let $\s=\{\g_1,\ldots,\g_{g-1}\}$ and $\s'=\{\beta_1,\ldots,\beta_{g-1}\}$ be as in the statement
of Proposition~\ref{amalgamatedproduct} and let $S_\s$ be the compact surface of genus $1$ obtained replacing with boundary circles 
the punctures of  $S\ssm\s$. Let us denote by $\g_i^+$ and $\g_i^-$ the boundary circles which replace the two punctures obtained removing 
$\g_i$ from $S$, for $i=1,\ldots,g-1$. By the genus $1$ case of (v) of Theorem~\ref{GT=Out}, there is a natural isomorphism:
\begin{equation}\label{iso1}
\Out^\I(\hPG(S_\s,\dS_\s))\cong\Sigma_{k+2g-2}\times\GT.
\end{equation}

Let $\s''=\{\alpha_1,\ldots,\alpha_{g-1}\}$ be a set of disjoint separating simple closed curves on $S$, with empty intersection with the curves in $\s$ 
and $\s'$, such that $\alpha_i$ is the only boundary component of a compact genus $1$ subsurface $S_{\alpha_i}$ of $S$ which contains $\g_i$ and 
$\beta_i$, for $i=1,\ldots,g-1$. Let then $S_{\s''}:=S\ssm\cup_{i=1}^{g-1}\ring{S}_{\alpha_i}$ be the compact genus $1$ subsurface of $S_\s$ with boundary 
the union of the curves $\alpha_1,\ldots,\alpha_{g-1}$, $\delta_1,\dots,\delta_k$. By the genus $1$ case of (v) of Theorem~\ref{GT=Out}, 
there is a natural isomorphism:
\begin{equation}\label{iso2}
\Out^\I(\hPG(S_{\s''},\dS_{\s''}))\cong\Sigma_{k+g-1}\times\GT.
\end{equation}


A given $f\in\GT$ then defines an element of $\Out^\I(\hPG(S_{\s''},\dS_{\s''}))$ which extends to an element of $\Out^\I(\hPG(S_\s,\dS_\s))$
and which we also denote by $f$. This element centralizes $\Sigma_{k+g-1}$ in $\Out^\I(\hPG(S_{\s''},\dS_{\s''}))$ and $\Sigma_{k+2g-2}$ in
$\Out^\I(\hPG(S_\s,\dS_\s))$. Therefore, $f$ preserves the procyclic subgroups generated by the Dehn twists  $\tau_{\alpha_i}^\ZZ$, for $i=1,\ldots,g-1$,
and $\tau_{\g_i^+}$, $\tau_{\g_i^-}$, for $i=1,\ldots,g-1$, and, by the genus $\leq 2$ case of (iv) of Theorem~\ref{GT=Out}, acts on these through 
the character $\chi_\l\co\GT\to\ZZ^\ast$. There is then a lift $\breve f\in\Aut^\I(\hPG(S_\s,\dS_\s))$ of $f$ with the same properties.

There is a natural epimorphism $\hPG(S_\s,\dS_\s)\to\hPG(S,\dS)_{\vec\s}$, whose kernel is topologically generated by the set of multitwists 
$\tau_{\g_i^+}\tau_{\g_i^-}^{-1}$, for $i=1,\ldots,g-1$. Since $\breve f$ preserves all the procyclic subgroups
generated by the Dehn twists $\tau_{\g_i^+}$, $\tau_{\g_i^-}$ and act on these through the character $\chi_\l$,
it follows that $\breve f$ descends to to an automorphism $\td f\in\Aut^\I(\hPG(S,\dS)_{\vec\s})$ and so $f$ descends 
to an outer automorphism $\bar f\in\Out(\hPG(S,\dS)_{\vec\s})$ such that $\td f$ lifts $\bar f$ and both preserve
the procyclic subgroups $\tau_{\alpha_i}^\ZZ$, for $i=1,\ldots,g-1$.

As in Proposition~\ref{amalgamatedproduct}, let $\s_i:=\s\ssm\{\g_i\}$, for $i=1,\ldots,g-1$. We have:

\begin{lemma}\label{lift}Let $\td f\in\Aut^\I(\hPG(S,\dS)_{\vec\s})$ be the element defined above.
Then, $\td f$ admits a unique extension $\td f_i\in\Aut^\I(\hPG(S,\dS)_{\vec{\s}_i})$, for all $i=1,\ldots,g-1$.
\end{lemma}

\begin{proof}Let $S_{\s_i}$ be the compact surface of genus $2$ obtained from $S_{\s}$ identifying the boundary circles $\g_i^+$ and $\g_i^-$,
so that $\ring{S}_{\s_i}=S\ssm\s_i$. By the genus $2$ case of item (v) of Theorem~\ref{GT=Out}, there is a natural isomorphism, for $i=1,\ldots,g-1$:
\begin{equation}\label{iso3}
\Out^\I(\hPG(S_{\s_i},\dS_{\s_i}))\cong\Sigma_{k+2g-4}\times\GT.
\end{equation}

There is a natural epimorphism $\hPG(S_\s,\dS_\s)\to\hPG(S_{\s_i},\dS_{\s_i})_{\vec\g_i}$, for $i=1,\ldots,g-1$, whose kernel is
topologically generated by the multitwist $\tau_{\g_i^+}\tau_{\g_i^-}^{-1}$. By the same argument above, this kernel is preserved by the element 
$f\in\GT\subset\Out^\I(\hPG(S_\s,\dS_\s))$. Therefore, $f$ descends to an outer automorphism of $\hPG(S_{\s_i},\dS_{\s_i})_{\vec\g_i}$ which, 
by the isomorphism~(\ref{iso3}), then admits a unique extension $f_i\in\GT\subset\Out^\I(\hPG(S_{\s_i},\dS_{\s_i}))$. 

There is a natural epimorphism $\hPG(S_{\s_i},\dS_{\s_i})\to\hPG(S,\dS)_{\vec{\s}_i}$, for $i=1,\ldots,g-1$,
whose kernel, topologically generated by the multitwists $\tau_{\g_j^+}\tau_{\g_j^-}^{-1}$ for $j\neq i$, by the usual argument, is preserved by $f_i$. 
Hence, $f_i$ descends to $\bar f_i\in\Out(\hPG(S,\dS)_{\vec{\s}_i})$, for $i=1,\ldots,g-1$.

Let $\breve f_i\in\Aut(\hPG(S,\dS)_{\vec{\s}_i})$ be a lift of the element $\bar f_i$ defined above.
The same construction of the homomorphism~(\ref{stabrestriction}) then provides a natural homomorphism:
\[\td{PR}_{{\vec\g}_i}\co\Aut^\I(\hPG(S,\dS)_{\vec{\s}_i})\to\Out^\I(\hPG(S,\dS)_{\vec\s}),\]
such that $\td{PR}_{{\vec\g}_i}(\breve f_i)=\bar f$, for $i=1,\ldots,g-1$. Let $x_i\in\hPG(S,\dS)_{\vec{\s}_i}$ be an element such that 
$\inn x_i\circ\breve f_i$ preserves the inertia group $\hI_{{\g}_i}$ and $x_i$ fixes a chosen orientation on $\g_i$.
As it is shown in the definition of the homomorphism~(\ref{stabrestriction}), with these choices, $\inn x_i\circ\breve f_i$ restricts to an automorphism of 
$\hPG(S,\dS)_{\vec\s}$ which differs from $\td f$ by an inner automorphism $\inn y_i\in\Inn(\hPG(S,\dS)_{\vec\s})$. It follows that
the automorphism $\td f_i:=\inn (y_ix_i)\circ\breve f_i$ of the group $\hPG(S,\dS)_{\vec{\s}_i}$ is an extension of $\td f$, for all $i=1,\ldots,g-1$. 

In order to conclude the proof of Lemma~\ref{lift}, we have to prove that this extension is unique, that is to say that, 
given an automorphism $\td f_i'\in\Aut^\I(\hPG(S,\dS)_{\vec{\s}_i})$ which preserves the subgroup $\hPG(S,\dS)_{\vec\s}$ 
and restricts there to $\td f$, we have $\td f_i'=\td f_i$, for $i=1,\ldots,g-1$.

By Theorem~\ref{relativestabilizers} and Theorem~\ref{normalizers multitwists}, the center $C$ of $\hPG(S,\dS)_{\vec{\s}_i}$ is the direct sum:
\[C=\prod_{j=1}^k\tau_{\delta_j}^\ZZ\;\;\oplus\prod_{l=1,\dots\hat{i}\dots,g-1}\tau_{\g_l}^\ZZ.\]
Therefore, the quotient of $\hPG(S,\dS)_{\vec{\s}_i}$ by its center $C$ is isomorphic to $\hPG(S\ssm\s_i)$ and 
there is a natural homomorphism, for $i=1,\ldots,g-1$: 
\begin{equation}\label{mono}
\Aut^\I(\hPG(S,\dS)_{\vec{\s}_i})\to\Aut^\I(\hPG(S\ssm\s_i)).
\end{equation}

\begin{lemma}\label{monoproof}The homomorphism~(\ref{mono}) is injective.
\end{lemma}

\begin{proof}Let us consider the wreath products $\ZZ^\ast\wr\Sigma_k$ and $\ZZ^\ast\wr\Sigma_{g-2}$ and their respective canonical
primitive actions on $\prod_{j=1}^k\tau_{\delta_j}^\ZZ$ and $\prod_{l=1,\dots\hat{i}\dots,g-1}\tau_{\g_l}^\ZZ$.

The natural action of $\Aut^\I(\hPG(S,\dS)_{\vec{\s}_i})$ on $C$ factors through the above canonical actions and a natural homomorphism:
\[\Aut^\I(\hPG(S,\dS)_{\vec{\s}_i})\to(\ZZ^\ast\wr\Sigma_k)\times(\ZZ^\ast\wr\Sigma_{g-2}).\]
Let us then observe that this homomorphism can be recovered from: 
\begin{itemize}
\item a natural character $\Aut^\I(\hPG(S,\dS)_{\vec{\s}_i})\to\Aut(\tau_{\delta_1}^\ZZ)$,
which, by the above construction and the case $g\leq 2$ of Theorem~\ref{GT=Out}, is determined by the character $\chi_\l\co\GT\to\ZZ^\ast$;
\item a natural character $\Aut^\I(\hPG(S,\dS)_{\vec{\s}_i})\to\Aut(\tau_{\g_1}^\ZZ)$, which is determined by the homomorphism~(\ref{mono}) 
and a natural character $\Aut^\I(\hPG(S\ssm\s_i))\to\Aut(\tau_{\g}^\ZZ)$, where $\g$ is a nonseparating simple closed curve on $S\ssm\s_i$;
\item a natural epimorphism $\Aut^\I(\hPG(S,\dS)_{\vec{\s}_i})\to\Sigma_k\times\Sigma_{g-2}$ which can be recovered from the 
homomorphism~(\ref{mono}) and the epimorphism $\Aut^\I(\hPG(S\ssm\s_i))\to\Sigma_{k+2g-4}$ (cf.\ (ii) of Theorem~\ref{compurevsfull}).
\end{itemize}

The above remarks imply that the action of $\Aut^\I(\hPG(S,\dS)_{\vec{\s}_i})$ on $C$ is determined by the homomorphism~(\ref{mono}).
Since, by \cite[Lemma~7.4]{BF} and the same argument of the proof of Lemma~\ref{injectivehom}, the natural homomorphism:
\[\Aut^\I(\hPG(S,\dS)_{\vec{\s}_i})\to\Aut(C)\times\Aut^\I(\hPG(S\ssm\s_i))\]
is injective, it follows that the homomorphism~(\ref{mono}) is already injective.
\end{proof}

Let us now denote, respectively, by $\ring f_i$ and $\ring f_i'$ the images in $\Aut^\I(\hPG(S\ssm\s_i))$ of the elements $\td f_i$ and $\td f_i'$ 
defined above by the monomorphism~(\ref{mono}). 

By hypothesis, $\td f_i'\cdot\td f_i^{-1}$ restricts to the identity on the subgroup $\hPG(S,\dS)_{\vec\s}$ and so induces the identity automorphism
on the quotient $\hPG(S\ssm\s_i)_{\vec\g_i}$ of this subgroup. From (i) of Lemma~\ref{item1}, it then follows that 
$\ring f_i'\cdot\ring f_i^{-1}\in\Inn(\hG(S\ssm\s_i))$.

By construction, $\ring f_i'\cdot\ring f_i^{-1}$ preserves the inertia group $\hI_{\g_i}$. Hence,
$\ring f_i'\cdot\ring f_i^{-1}\in\Inn(\hG(S\ssm\s_i)_{\g_i})$, where the latter group is identified with a subgroup of 
$\Inn(\hG(S\ssm\s_i))\subset\Aut^\I(\hPG(S\ssm\s_i))$. 

Above, we observed that $\td f_i'\cdot\td f_i^{-1}$ induces the identity on 
$\hPG(S\ssm\s_i)_{\vec\g_i}$ and so $\ring f_i'\cdot\ring f_i^{-1}$ identifies with an inner automorphism of $\hG(S\ssm\s_i)_{\g_i}$ which restricts to 
the identity on $\hPG(S\ssm\s_i)_{\vec\g_i}$. By \cite[Theorem~4.14]{BF}, this implies that $\ring f_i'\cdot\ring f_i^{-1}=1$. Since the 
homomorphism~(\ref{mono}) is injective, it follows that $\td f_i'\cdot\td f_i^{-1}=1$ and then $\td f_i'=\td f_i$.
\end{proof}

The automorphisms $\td f_i$, for $i=1,\ldots,g-1$, then glue to an automorphism:
\[\td F\in\Aut(\coprod_{\hPG(S,\dS)_{\vec\s}}^{i=1,\ldots,g-1}\hPG(S,\dS)_{\vec{\s}_i}).\]

\begin{lemma}\label{kerF}With the notations of Corollary~\ref{proamalgamatedproduct}, there holds $\td F(\ker\wh{\Theta})\subseteq\ker\wh{\Theta}$.
\end{lemma}

\begin{proof}By Corollary~\ref{proamalgamatedproduct}, it is enough to prove that there holds $\td F([\tau_{\beta_i},\tau_{\beta_j}])\in\ker\wh{\Theta}$, 
for $i,j=1,\ldots,g-1$. Letting $\underbar F:=\wh{\Theta}\circ\td F$, this is equivalent to the identity
$\underbar F([\tau_{\beta_i},\tau_{\beta_j}])=[\underbar F(\tau_{\beta_i}),\underbar F(\tau_{\beta_j})]=1$, for $i,j=1,\ldots,g-1$, 
and so to the fact that the element $\underbar F(\tau_{\beta_i})$ commutes with $\underbar F(\tau_{\beta_j})$ in the profinite mapping class group 
$\hPG(S,\dS)$, for $i,j=1,\ldots,g-1$.

Let us recall that $S_{\alpha_i}$ is the compact genus $1$ subsurface of $S$ with boundary $\alpha_i$ which contains the simple closed curve
$\beta_i$, for $i=1,\ldots,g-1$. 

By (i) of Theorem~\ref{stabilizers} and the subgoup congruence property for mapping class groups in genus $\leq 2$,
there are natural monomorphisms $\hPG(S_{\alpha_i},\alpha_i)\hookra\hPG(S,\dS)_{\vec{\s}_i}$. Let us identify $\hPG(S_{\alpha_i},\alpha_i)$ 
with its image in $\hPG(S,\dS)_{\vec{\s}_i}$, for $i=1,\ldots,g-1$. 

We defined the automorphism $\td f_i$ of $\hPG(S,\dS)_{\vec{\s}_i}$ in such a way that it preserves the procyclic subgroup $\tau_{\alpha_i}^\ZZ$.
Thus, $\td f_i$ also preserves the normalizer of $\tau_{\alpha_i}^\ZZ$ in $\hPG(S,\dS)_{\vec{\s}_i}$. By Theorem~\ref{normalizers multitwists}, 
$\td f_i$ then preserves the subgroup $\hPG(S_{\alpha_i},\alpha_i)$ (which contains the element $\tau_{\beta_i}$), for $i=1,\ldots,g-1$.

Since, inside $\hPG(S,\dS)$, for $i\neq j$, all elements of the subgroup $\hPG(S_{\alpha_i},\alpha_i)$ commute with all elements of the subgroup 
$\hPG(S_{\alpha_j},\alpha_j)$, it follows that $\underbar F(\tau_{\beta_i})\in\hPG(S_{\alpha_i},\alpha_i)$ commutes with 
$\underbar F(\tau_{\beta_j})\in\hPG(S_{\alpha_j},\alpha_j)$ for $i,j=1,\ldots,g-1$, which concludes the proof of the lemma.
\end{proof}

By the Hopfian property of (topologically) finitely generated profinite groups, Corollary~\ref{proamalgamatedproduct} and Lemma~\ref{kerF} imply 
that $\td F$ descends to an automorphism $\bar F\in\Aut(\hPG(S,\dS))$.
Moreover, by definition, $\bar F$ preserve the inertia groups $\hI_{\g_i}$, for $i=1,\ldots,g-1$. In particular, we have that
$\bar F\in\Aut^{\I_0}(\hPG(S,\dS))$.

The only choice involved in the definition of $\bar F$ is that of the lift $\breve f\in\Aut^\I(\hPG(S_\s,\dS_\s))$ of the given $f\in\GT$, which
we had identified with an element of $\Out^\I(\hPG(S_\s,\dS_\s))$. Therefore, 
if we let $F$ be the image of $\bar F$ in $\Out^{\I_0}(\hPG(S,\dS))$, the assignment $f\mapsto F$ is well defined. 

Moreover, following through the definition of $\bar F$, we see that, if $\breve f_1$ and $\breve f_2$ are
lifts of elements $f_1,f_2\in\GT$, their product $\breve f_1\cdot\breve f_2$ is a lift of $f_1\cdot f_2\in\GT$ with the property 
that the associated element in $\Out^{\I_0}(\hPG(S,\dS))$ is the product $\bar F_1\cdot\bar F_2$.

Thus, the assignment $f\mapsto F$ determines a homomorphism $\GT\to\Out^{\I_0}(\hPG(S,\dS))$, as claimed in Theorem~\ref{GTrepr}. 
That this representation is faithful, follows by considering its restriction to the subgroup $\hPG(S,\dS)_{\vec\s}$.
The last part of the statement of Theorem~\ref{GTrepr} about the action on the central inertia group follows from the construction above.

\subsection{Proof of Theorem~\ref{GTrepr} for $S=S_{g,n}^k$ and $n+k\neq 0$}\label{generalcase}
This case of Theorem~\ref{GTrepr} follows from the one considered in the previous section.
In fact, the kernel of the natural epimorphism $\hPG(S_g^{k+n},\partial S_g^{k+n})\to\hPG(S_{g,n}^k,\partial S_{g,n}^k)$ is preserved
by the outer action of $\GT$ on $\hPG(S_g^{k+n},\partial S_g^{k+n})$ as defined in the previous Section~\ref{compactcase}. Therefore, we get a
representation $\hat\rho_{\GT}\co\GT\to\Out^{\I_0}(\hPG(S,\partial S))$, for $S=S_{g,n}^k$, with the properties claimed in Theorem~\ref{GTrepr}.

\subsection{Proof of Theorem~\ref{GTrepr} for $S=S_g$}\label{closedcase}
The closed surface case of Theorem~\ref{GTrepr} is instead a consequence of the following considerations, which will be fundamental also for the proof 
of the case $g(S)\geq 3$ of Lemma~\ref{GTreprcong} given in Section~\ref{lemma>2}. 

With the notations of Section~\ref{compactcase}, the automorphism $\bar F$ restricts on the subgroup $\hPG(S,\dS)_{\vec\s}$ of $\hPG(S,\dS)$ 
to the automorphism $\td f\in\Aut^\I(\hPG(S,\dS)_{\vec\s})$. We then have:

\begin{lemma}\label{boundingpair}Let $\g,\g'$ be simple nonseparating curves on $S\ssm\s$ which, 
together with the boundary curve $\d_1$, bound a $3$-holed genus $0$ subsurface of $S$. Then, the automorphism $\td f$ preserves the
conjugacy class in $\hPG(S,\dS)_{\vec\s}$ of the procyclic subgroup generated by $\tau_\g^{-1}\tau_{\g'}$.
\end{lemma}

\begin{proof}Let us observe again that, by the subgroup congruence property in genus $\leq 2$, we have $\hPG(S\ssm\s)=\kPG(S\ssm\s)$ and 
$\hPG(S,\dS)_{\vec\s}=\kPG(S,\dS)_{\vec\s}$.

The group $\Aut^\I(\hPG(S,\dS)_{\vec\s})$ acts on the complex of profinite curves $\kC(S\ssm\s)$ through the natural homomorphism 
$\Aut^\I(\hPG(S,\dS)_{\vec\s})\to\Aut^\I(\hPG(S\ssm\s))$. From \cite[Theorem~5.5 and Proposition~7.2]{BF}, it then follows that
the topological type of the bounding pair $\{\g,\g'\}\in\kC(S\ssm\s)$ is preserved by the action of $\td f$, which means that $\td f$ maps the pair
$\{\g,\g',\}$ to some other bounding pair $\{\td f(\g),\td f(\g')\}$ of the same topological type, that is to say, in the $\Inn(\hG(S,\dS)_{\vec\s})$-orbit 
of $\{\g,\g',\}$. 

Since $\td f$ preserves the procyclic subgroup generated by $\tau_{\d_1}$, from Theorem~\ref{relativestabilizers} and 
Theorem~\ref{normalizers multitwists}, it follows that it also preserves the marked topological type of the bounding pair $\{\g,\g'\}$, 
that is to say, the bounding pair $\{\td f(\g),\td f(\g')\}$ is in the $\Inn(\hPG(S,\dS)_{\vec\s})$-orbit of $\{\g,\g'\}$. 
By the construction of the automorphism $\td f$, we also know 
that it acts on a nonseparating profinite Dehn twist in $\hPG(S,\dS)_{\vec\s}$ by conjugation twisted by the character $\chi_\l\co\GT\to\ZZ^\ast$.
These facts then imply that $\td f$ preserves the conjugacy class in $\hPG(S,\dS)_{\vec\s}$ of the procyclic subgroup generated by 
$\tau_\g^{-1}\tau_{\g'}$. 
\end{proof}

Let $\ol{S}$ be the surface obtained glueing a disc $D$ to the boundary component $\d_1$ of $S$
and let us fix a base point $P\in D$. There is then a profinite Birman exact sequence:
\begin{equation}\label{Birmancompact}
1\to\hp_1(\ol{S},P)\times\tau_{\d_1}^\ZZ\to\hPG(S,\dS)\to\hPG(\ol{S},\partial\ol{S})\to 1,
\end{equation}
where the conjugacy class of $\tau_\g^{-1}\tau_{\g'}$ in $\hPG(\ol{S},\partial\ol{S})$ (topologically) generates the normal subgroup $\hp_1(\ol{S},P)$.
By Lemma~\ref{boundingpair}, the automorphism $\bar F$ preserves the conjugacy class in $\hPG(S,\dS)$ of the procyclic subgroup 
generated by $\tau_\g^{-1}\tau_{\g'}$. Therefore, the representation $\hat{\rho}_{\GT}\co\GT\hookra\Out^{\I_0}(\hPG(S,\dS))$ 
preserves the normal subgroup $\hp_1(\ol{S},P)\times\tau_{\d_1}^\ZZ$ of $\hPG(S,\dS)$ and so induces a representation on the quotient:
\[\hat\rho_{\GT}\co\GT\to\Out^{\I_0}(\hPG(\ol{S},\partial\ol{S})).\]
That this representation is also faithful follows from its compatibility with the restriction of $\hat\rho_{\GT}$ to modular subgroups of 
$\hPG(\ol{S},\partial\ol{S})$. Note that, for $k=1$, the surface $\ol{S}$ is closed and $\partial\ol{S}=\emptyset$, so that, in particular, 
we get the closed surface case of Theorem~\ref{GTrepr}.

\subsection{Proof of Lemma~\ref{GTreprcong} for $g(S)\geq 3$}\label{lemma>2}
The image of the natural representation:
\[\hat{\rho}_{S}\co\hPG(S,\dS)\to\Aut(\hp_1(\ol{S},P)),\]
induced by restriction of inner automorphisms to the normal subgroup $\hp_1(\ol{S},P))$ appearing in the profinite Birman 
exact sequence~(\ref{Birmancompact}), is precisely the procongruence mapping class group $\kPG(S)$ 
(cf.\ \cite[Corollary~4.7]{congtop}). 

Let us then show that $\hat{\rho}_{\GT}\co\GT\hookra\Out^{\I_0}(\hPG(S,\dS))$ induces a representation: 
\begin{equation}\label{checkrepr}
\check{\rho}_{\GT}\co\GT\to\Out^{\I_0}(\kPG(S)).
\end{equation}
This follows from the lemma:

\begin{lemma}\label{congkernel}The outer automorphisms of $\hPG(S,\dS)$ in the image of $\hat{\rho}_{\GT}$ preserve the normal subgroup 
$\ker\hat{\rho}_{S}$ of $\hPG(S,\dS)$.
\end{lemma}

\begin{proof}Let $\td\phi\in\Aut^{\I_0}(\hPG(S,\dS))$ be a lift of an element $\phi\in\Im\hat{\rho}_{\GT}\subset\Out^{\I_0}(\hPG(S,\dS))$.
By Lemma~\ref{boundingpair}, we have that $\td\phi(\hp_1(\ol{S},P))=\hp_1(\ol{S},P)$. 
Let $x\in\ker\hat{\rho}_{S}$, that is to say an element $x\in\hPG(S,\dS)$ such that the restriction of $\inn x$ to $\hp_1(\ol{S},P)$ is trivial. 
Then, also $\td\phi\circ\inn x\circ\td\phi^{-1}$ restricts to the trivial automorphism on $\hp_1(\ol{S},P)$ and,
from the identity $\td\phi\circ\inn x\circ\td\phi^{-1}=\inn(\td\phi(x))$, it follows that $\td\phi(x)\in\ker\hat{\rho}_{S}$.
\end{proof}

Note that the representation $\check{\rho}_{\GT}$~(\ref{checkrepr}) is defined for every hyperbolic nonclosed surface.  
Composing with the natural homomorphism $\Out^{\I}(\kPG(S))\to\Out^{\I}(\kG(\ring{S}))$ 
(cf.\ Theorem~\ref{IvsIns} and Theorem~\ref{compurevsfull}), we then get the natural representation:
\[\Psi_{(\ring{S},\emptyset)}\co\GT\to\Out^{\I}(\kG(\ring{S})).\]

If $\ring{S}=S'\ssm P$, where $S'$ is a closed surface, we can further compose $\Psi_{(\ring{S},\emptyset)}$ with the 
homomorphism~(\ref{Birmaninduced}) $B_P\co\Out^{\I}(\kG(\ring{S}))\to\Out^{\I}(\kG(S'))$, thus obtaining the representation:
\[\Psi_{(S',\emptyset)}\co\GT\to\Out^{\I}(\kG(S')).\]

By construction, these representations are functorial with respect to the restriction to subsurfaces of $S$. 
This proves Lemma~\ref{GTreprcong} for all the cases when $g(S)\geq 3$ and $\dS=\emptyset$.

The representation $\Psi_{(S,\dS)}$, for $g(S)\geq 3$ and $\dS\neq\emptyset$, is then obtained composing
$\Psi_{(\tS,\emptyset)}$, for $\tS=S_{g,n+2k}$ as in Remark~\ref{identify}, with the homomorphism $R_{(S,\dS)}$~(\ref{restrwithboundary}).

\subsection{The automorphism group of the procongruence Grothendieck-Teichm\"uller tower}\label{GGT}
In the landmark paper \cite{Drinfeld}, Drinfeld, following Grothendieck's \cite[\emph{Esquisse d'un Programme}]{Esquisse},
speculated that the profinite Grothendieck-Teichm\"uller group, which is defined taking into account only
the first two levels of the genus $0$ stage of the profinite Grothendieck-Teichm\"uller tower, naturally acts on the full tower and that, moreover, 
this action induces an isomorphism:

\begin{conjecture}[Drinfeld-Grothendieck]\label{Drinfeld}$\GT\cong\Aut(\hat\fT^\mathrm{out})$.
\end{conjecture}

Combining Conjecture~\ref{Drinfeld} with Conjecture~\ref{Grothendieck} yields the Grothendieck-Teichm\"uller conjecture:

\begin{conjecture}\label{GTconj}$\GT\cong G_\Q$.
\end{conjecture}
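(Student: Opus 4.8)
The plan is to obtain Conjecture~\ref{GTconj} as the formal consequence, already indicated above, of the two preceding conjectures, reducing matters to results of this paper together with one anabelian rigidity input. Concretely, recall from Subsection~\ref{GGT} that Bely\v{\i}'s theorem furnishes an embedding $G_\Q\hookra\GT$, and that by naturality the faithful representation $\hat\rho_{\GT}\co\GT\hookra\Aut(\hat\fT^\mathrm{out})$ of Theorem~\ref{GTaction}, restricted along this embedding, coincides with the Galois representation $\rho_{\hat\fT}\co G_\Q\hookra\Aut(\hat\fT^\mathrm{out})$. If both $\hat\rho_{\GT}$ (Conjecture~\ref{Drinfeld}) and $\rho_{\hat\fT}$ (Conjecture~\ref{Grothendieck}) are isomorphisms, then the embedding $G_\Q\hookra\GT$ equals the composite $\hat\rho_{\GT}^{-1}\circ\rho_{\hat\fT}$ of two isomorphisms, hence is itself an isomorphism; this is exactly Conjecture~\ref{GTconj}. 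So the entire problem reduces to establishing Conjectures~\ref{Drinfeld} and~\ref{Grothendieck}.

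For Conjecture~\ref{Drinfeld} the natural route is to upgrade Theorem~C (cf.\ Theorem~\ref{GT=Out}) from the procongruence tower $\check\fT^\mathrm{out}$ to the full profinite tower $\hat\fT^\mathrm{out}$. Since Theorem~\ref{GTaction} already gives the faithful $\hat\rho_{\GT}$, only surjectivity is at stake: one must show that every automorphism of $\hat\fT^\mathrm{out}$ preserves the congruence kernel at each level, hence descends to an automorphism of $\check\fT^\mathrm{out}$ through the natural transformation $\rho^\mathrm{out}\co\hat\fT^\mathrm{out}\Ra\check\fT^\mathrm{out}$, and that the resulting map $\Aut(\hat\fT^\mathrm{out})\to\Aut(\check\fT^\mathrm{out})$ is injective; Theorem~C would then conclude. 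A positive answer to the congruence subgroup problem (Subsection~\ref{congsubprop}) would make the two towers coincide outright and render this step vacuous. Controlling the congruence kernel — i.e.\ extending to genus $\geq 3$ the phenomena that yield the congruence subgroup property for $g\leq 2$ — is precisely the main obstacle here, and it is an open problem.

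For Conjecture~\ref{Grothendieck}, i.e.\ $G_\Q\cong\Aut(\hat\fT^\mathrm{out})$, the available input is Pop's proof of the weak (I/OM) form (cf.\ \cite{Pop1}, \cite{Pop2}). Passing from I/OM to the full statement requires showing that the inertia conditions built into the Teichm\"uller tower force every automorphism of the tower to be of I/OM type — so that Pop's theorem produces a well-defined map $\Aut(\hat\fT^\mathrm{out})\to G_\Q$ — and then that this map is a two-sided inverse of $\rho_{\hat\fT}$; this is an anabelian rigidity statement for the arithmetic of the tower of moduli stacks $\mathrm{P}\ccM(S)^{\log}$ that does not seem within reach of the combinatorial methods developed here. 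In summary, the realistic plan is: (1) remove the "procongruence" qualifier from Theorem~C, either via a solution to the congruence subgroup problem or by a direct analysis of the congruence kernels throughout the tower; (2) strengthen Pop's I/OM theorem to Conjecture~\ref{Grothendieck}; (3) compose $\hat\rho_{\GT}^{-1}$ with $\rho_{\hat\fT}$ and check compatibility with the Bely\v{\i} embedding $G_\Q\hookra\GT$. Steps (1) and (2) are, at present, each a major open problem, and they are where essentially all of the difficulty lies.
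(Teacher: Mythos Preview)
The statement you were asked to prove is a \emph{conjecture}, not a theorem: the paper does not prove it, nor claim to. The paper's entire treatment of Conjecture~\ref{GTconj} consists of the single sentence ``Combining Conjecture~\ref{Drinfeld} with Conjecture~\ref{Grothendieck} yields the Grothendieck-Teichm\"uller conjecture,'' followed by the statement itself. Your proposal correctly identifies exactly this logical dependency --- that $\GT\cong G_\Q$ would follow formally if both $\rho_{\hat\fT}$ and $\hat\rho_{\GT}$ were isomorphisms onto $\Aut(\hat\fT^{\mathrm{out}})$ --- and correctly acknowledges that both antecedent conjectures are major open problems. In that sense your assessment is accurate and matches the paper's own position.

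Where you go somewhat beyond the paper is in sketching possible attack routes on Conjectures~\ref{Drinfeld} and~\ref{Grothendieck} (via the congruence subgroup problem and via strengthening Pop's I/OM). This is reasonable commentary, but it should not be presented as a ``proof proposal'': there is no proof to give, and the paper makes no pretense of offering one. The honest summary is simply that Conjecture~\ref{GTconj} is open, that it is formally implied by the conjunction of Conjectures~\ref{Grothendieck} and~\ref{Drinfeld}, and that the paper's contribution in this direction is Theorem~\ref{GTaction} (the existence of the faithful $\hat\rho_{\GT}$), which shows that Conjecture~\ref{Grothendieck} alone already implies Conjecture~\ref{GTconj}.
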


As an application of Theorem~\ref{GT=Out}, we will now prove the following procongruence version of Conjecture~\ref{Drinfeld}:

\begin{theorem}\label{GT=AutT}There is a natural isomorphism $\GT\cong\Aut(\check\fT^\mathrm{out})$.
\end{theorem}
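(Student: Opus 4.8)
The plan is to deduce Theorem~\ref{GT=AutT} from the general version of Theorem~A (Theorem~\ref{GT=Out}), together with the functoriality package assembled in Section~4 and the representation $\Psi_S\co\GT\to\Out^{\I_0}(\kG(S))$. First I would spell out what an element of $\Aut(\check\fT^\mathrm{out})$ is: since $\check\fT^\mathrm{out}$ is the full image in $\cG^\mathrm{out}$ of the functor $\check\fT^\mathrm{out}\co\cS\to\cG^\mathrm{out}$, an automorphism of the tower is a compatible collection of outer automorphisms $\{f_S\in\Out(\kPG(S,\dS))\}_{S\in\cS}$ which commute with all the morphisms $\check\eta_\ast$ induced by maps $\eta\co S\hookra S'$ in $\cS$. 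The first observation is that each such $f_S$ must be an $\I$-automorphism: the conjugacy classes of inertia generated by Dehn twists are detected by the tower maps (a Dehn twist along a nonseparating curve on $S$ becomes, under an appropriate $\eta$, the canonical boundary/inertia element of a relative mapping class group, whose normal inertia subgroup is $\I$-characteristic by Proposition~\ref{congcent}(iii)), so compatibility with the morphisms forces $f_S\in\Aut^{\I}(\kPG(S,\dS))$, hence $f_S\in\Aut^{\I_0}$ by Theorem~\ref{IvsIns} / Proposition~\ref{IvsIns3}. This gives a well-defined restriction map $\Aut(\check\fT^\mathrm{out})\to\prod_{S}\Out^{\I_0}(\kPG(S,\dS))$ landing in the compatible collections.

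Next I would construct the two maps realizing the isomorphism. In one direction, the representations $\Psi_S$ (constructed via Theorem~B and the congruence-kernel-preservation, cf.\ Corollary~\ref{compurevsfull}) are, by construction, natural with respect to the category $\cS$ — this naturality is exactly what is needed for $\{\Psi_S(\g)\}_{S\in\cS}$ to define an element of $\Aut(\check\fT^\mathrm{out})$, so we get $\Psi\co\GT\to\Aut(\check\fT^\mathrm{out})$. In the other direction, given $F=\{f_S\}\in\Aut(\check\fT^\mathrm{out})$, I would evaluate at a single surface $S_0$ with $d(S_0)>1$ of genus $0$, say $S_0=S_{0,5}$ (or its relative variant), and use $f_{S_0}\in\Out^{\I_0}(\kPG(S_{0,5}))\cong\Out^\I(\hPG(S_{0,5}))$ together with the Harbater--Schneps--HMM isomorphism~(\ref{HSHMM}) / Proposition~\ref{purevsfullgenus0} to obtain an element $\Theta(F)\in\GT$ (after quotienting out the $\Sigma_5$-part, which is forced to be trivial by compatibility since the tower morphisms remember the marking/partition structure, cf.\ the proof of Proposition~\ref{autcurvecomplex}). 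The content is then that $\Theta$ is a two-sided inverse to $\Psi$.

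That $\Theta\circ\Psi=\mathrm{id}_{\GT}$ is essentially the statement $\mu_{S_{0,5}}\circ\Psi_{S_{0,5}}=\mathrm{id}$, which is part of the compatibility built into Theorem~A. The harder direction — and I expect this to be the main obstacle — is $\Psi\circ\Theta=\mathrm{id}_{\Aut(\check\fT^\mathrm{out})}$, i.e.\ showing that an automorphism of the whole procongruence tower is determined by its value at $S_{0,5}$. Here is where the functorial monomorphisms of Section~4 do the work: given $F=\{f_S\}$, set $\g:=\Theta(F)$; then $\Psi_S(\g)$ and $f_S$ agree at $S_{0,5}$, and for an arbitrary $S$ one connects $S$ to $S_{0,5}$ by a chain of maps of types (i), (ii), (iii) and uses that the induced homomorphism $\mu_{S,S_{0,5}}\co\Out^{\I_0}(\kG(S))\hookra\Out^{\I_0}(\kG(S_{0,5}))$ (Theorem~\ref{functorialhom}, Theorem~\ref{functorialinj}) is \emph{injective}; since both $\Psi_S(\g)$ and $f_S$ map to the same element of $\Out^{\I_0}(\kG(S_{0,5}))$ under $\mu_{S,S_{0,5}}$ — here one must check that $F$, being a tower automorphism, is genuinely compatible with the maps $R_\g, R_{D_\delta}, B_P$ underlying $\mu$, which is the crux — injectivity forces $\Psi_S(\g)=f_S$ in $\Out^{\I_0}(\kG(S))$, and then Proposition~\ref{purevsfullboundary} lifts this equality to $\Out^{\I_0}(\kPG(S,\dS))$, i.e.\ to the tower level. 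The one delicate point requiring care is that the morphisms of $\cS$ between surfaces with boundary realize the abstract maps $R_\g$, $R_{D_\delta}$, $B_P$ after passing to $\ring S$ via the short exact sequences~(\ref{centralext}); this identification, already used implicitly in Section~4, is what ties the categorical compatibility of $F$ to the injectivity statements, and once it is in place the theorem follows.
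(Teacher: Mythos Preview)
Your proposal is correct and follows essentially the same route as the paper. Both arguments first show that any tower automorphism lands in $\Out^\I(\kPG(S,\dS))$ by exploiting compatibility with the tower maps (this is the paper's Lemma~\ref{trace}, and your sketch of it via boundary components becoming central inertia is exactly the paper's argument), then use the representations $\Psi_{(S,\dS)}$ from Lemma~\ref{GTreprcong} to build the monomorphism $\GT\hookra\Aut(\check\fT^\mathrm{out})$, and finally invoke Theorem~\ref{GT=Out} to close the loop.

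The only real difference is organizational. You construct an explicit inverse $\Theta$ by evaluating at $S_{0,5}$ and then use the injectivity of $\mu_{S,S_{0,5}}$ (Theorem~\ref{functorialhom}) together with tower compatibility to propagate the identity $\Psi\circ\Theta=\mathrm{id}$ to every surface. The paper instead works one surface at a time: by Theorem~\ref{GT=Out}(ii), each $\check{\tr}_S$ lands in $\Sigma_n\times\GT$, and the $\Sigma_n$-part is killed by compatibility with the puncture-filling maps $S\to\ol S_i$ (the same argument you allude to for $S_{0,5}$, but carried out for every $S$). The paper's packaging is a bit more economical because it uses the isomorphism $\Out^{\I_0}(\kG(S))\cong\GT$ directly rather than reducing to a single base surface via $\mu$, but the content is the same, and the ``delicate point'' you flag --- that tower compatibility with subsurface embeddings translates into compatibility with $R_\g$, $R_{D_\delta}$, $B_P$ --- is precisely what underlies the naturality clause in the statement of Theorem~\ref{GT=Out}.
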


Of course, assuming the congruence subgroup property, the profinite and procongruence versions of the Drinfeld-Grothendieck conjecture 
are just equivalent. Even if this is still an open problem, the following immediate consequence of Theorem~\ref{GTrepr} shows that 
there are no additional restrictions coming from the profinite Grothendieck-Teichm\"uller tower:

\begin{corollary}\label{GTaction}There is a natural faithful representation $\hat\rho_{\GT}\co\GT\hookra\Aut(\hat\fT^\mathrm{out})$.
In particular, Conjecture~\ref{Grothendieck} implies Conjecture~\ref{GTconj}.
\end{corollary}

\subsection{Proof of Theorem~\ref{GT=AutT}}
Let us observe that there are natural homomorphisms:
\[\hat{\tr}_{S}\co\Aut(\hat\fT^\mathrm{out})\to\Out(\hPG(S,\dS))\hspace{0.5cm}\mbox{and}\hspace{0.5cm}
\check{\tr}_{S}\co\Aut(\check\fT^\mathrm{out})\to\Out(\kPG(S,\dS)),\]
defined restricting an automorphism of the full tower to one of its components.

\begin{lemma}\label{trace}There holds $\Im\hat{\tr}_{S}\subseteq\Out^\I(\kPG(S,\dS))$ and $\Im\check{\tr}_{S}\subseteq\Out^\I(\kPG(S,\dS))$.
\end{lemma}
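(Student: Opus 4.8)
The plan is to show that any automorphism of the (profinite or procongruence) Grothendieck–Teichm\"uller tower restricts on each component $\hPG(S,\dS)$ (resp.\ $\kPG(S,\dS)$) to an automorphism preserving the conjugacy class of the procyclic subgroup generated by a nonseparating Dehn twist, hence lies in $\Out^\I(\hPG(S,\dS))$ (resp.\ $\Out^\I(\kPG(S,\dS))$) by Proposition~\ref{IvsIns3} and Proposition~\ref{purevsfullboundary}. Since the two cases are formally identical (the functor $\check\fT$ factors through $\hat\fT$), I would treat them in parallel. The key structural input is that the tower $\cS$ contains enough morphisms $\eta_\ast\co\hPG(S,\dS)\to\hPG(S',\dS')$ coming from embeddings $\eta\co S\hookra S'$ in the category $\cS$, and that these morphisms carry Dehn twists to Dehn twists (and boundary Dehn twists to boundary Dehn twists, by the conventions on how $1$-punctured discs are sent to $1$-punctured or closed discs).

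First I would reduce to identifying, intrinsically and functorially, the set of procyclic subgroups generated by \emph{nonseparating} Dehn twists inside each $\hPG(S,\dS)$. The starting point is a surface where this is already understood: for $S=S_{0,5}$ (or a closed genus $\leq 2$ surface via the hyperelliptic description) one knows $\Out^\flat = \Out$ and $\GT$ acts preserving the standard inertia, so an automorphism of the tower restricted to such a component automatically preserves the relevant conjugacy classes of inertia subgroups. Then I would use the morphisms of the tower to propagate this: given $f\in\Aut(\hat\fT^\mathrm{out})$ and a nonseparating curve $\g$ on $S$, embed a small pair-of-pants neighborhood datum so that $\tau_\g$ is the image, under a tower morphism $\eta_\ast$, of an element already known to be a nonseparating Dehn twist in a low-complexity component $\hPG(S_0,\dS_0)$. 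Because $f$ commutes with $\eta_\ast$ (it is an automorphism of the tower) and $f$ preserves the relevant inertia in the low-complexity piece, $f_S(\tau_\g^{\hZ})$ is conjugate in $\hPG(S,\dS)$ to $\eta_\ast$ of a procyclic inertia subgroup, hence — using that $\eta_\ast$ is injective on the relevant inertia and that topological types of inertia are preserved (Theorem~5.5 and Proposition~7.2 in \cite{BFL}, or Remarks~\ref{lessrestrictive}) — is again generated by a nonseparating Dehn twist.

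Concretely, the steps in order would be: (1) recall that every $f\in\Aut(\hat\fT^\mathrm{out})$ yields a coherent family $\{f_S\in\Out(\hPG(S,\dS))\}_{S\in\cS}$ commuting with all $\hat\eta_\ast$; (2) fix $S$ and a nonseparating simple closed curve $\g$ on $\ring S$, and choose an embedding $\eta\co S''\hookra S$ in $\cS$ with $S''$ of small modular dimension such that $\tau_\g = \eta_\ast(\tau_{\g''})$ for a nonseparating $\g''$; (3) in $\hPG(S'',\dS'')$ invoke the genus $\le 2$ / genus $0$ results (via Proposition~\ref{purevsfullgenus0}, Proposition~\ref{outhyp}, Proposition~\ref{g=1n=2}) to conclude $f_{S''}$ preserves the conjugacy class of $\tau_{\g''}^{\hZ}$; (4) push forward by $\hat\eta_\ast$ and use $f_S\circ\hat\eta_\ast = \hat\eta_\ast\circ f_{S''}$ together with the fact that $\hat\eta_\ast$ sends the inertia group of $\g''$ onto that of $\g$ (up to conjugacy) to get that $f_S$ preserves the conjugacy class of $\tau_\g^{\hZ}$; (5) since $\g$ was arbitrary nonseparating, $f_S\in\Out^{\I_0}(\hPG(S,\dS))=\Out^\I(\hPG(S,\dS))$ by Proposition~\ref{IvsIns3} (and its relative/pure analogues, Proposition~\ref{purevsfullboundary}); (6) repeat verbatim with $\hat\fT$ replaced by $\check\fT$, which is legitimate because $\check\fT$ factors through $\hat\fT$ and the congruence quotients are compatible.

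The main obstacle I anticipate is step (2)–(4): making sure that for \emph{every} surface $S$ in the tower — in particular closed surfaces and surfaces with boundary — there is an admissible embedding $\eta\co S''\hookra S$ in the category $\cS$ exhibiting a chosen nonseparating Dehn twist as coming from a low-complexity component, and that $\hat\eta_\ast$ behaves correctly on the corresponding inertia groups (injectivity on these subgroups, preservation of non-separating type). For closed surfaces this is subtle because the curve must be realized inside an embedded subsurface with boundary; here one would lean on Theorem~\ref{relativestabilizers} and Theorem~\ref{normalizers multitwists} to control centralizers and on the already-proven Theorem~\ref{GTrepr} / the construction in Subsection~\ref{compactcase} for the compatibility with the character $\chi_\l$ and with marked topological types. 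Everything else is formal bookkeeping about automorphisms of functors.
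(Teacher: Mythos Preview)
Your strategy is the reverse of the paper's, and the reversal creates a real gap at step~(3). You want to bootstrap from a low-complexity subsurface $S''\hookra S$ where $f_{S''}$ is already known to preserve the conjugacy class of $\tau_{\g''}^{\hZ}$ for a nonseparating $\g''$. But a nonseparating curve forces $g(S'')\ge 1$, and the results you cite do \emph{not} say that an arbitrary outer automorphism of such a component preserves nonseparating inertia: Proposition~\ref{outhyp} gives $\Out(\hU(S))\cong\{\pm 1\}\times\Out^\I(\hU(S))$ and Proposition~\ref{g=1n=2} gives $\Out(\hG(S_{1,2}))\cong\{\pm 1\}\times\GT$, so in both cases there \emph{are} automorphisms outside $\Out^\I$. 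The element $f_{S''}$ is not an arbitrary automorphism---it comes from the tower---but you never use the tower structure to exclude the extra $\{\pm 1\}$ factor; you only invoke the cited propositions, which by themselves are insufficient. (The genus~$0$ input via Proposition~\ref{purevsfullgenus0} is irrelevant here, since $S_{0,n}$ carries no nonseparating curves.)

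The paper's argument avoids this entirely by going in the opposite direction, using the tower morphisms themselves to \emph{characterize} the Dehn twist subgroups, rather than importing the characterization from a low-complexity piece. First, for $\dS=\cup_i\d_i\neq\emptyset$, the procyclic subgroup $\tau_{\d_i}^{\ZZ}$ is exactly the kernel of the tower morphism $\kPG(S,\dS)\to\kPG(\td S_i,\dd\td S_i)$ induced by capping $\d_i$ with a $1$-punctured disc; since an automorphism of the tower must preserve kernels of tower morphisms, it preserves each $\tau_{\d_i}^{\ZZ}$. Second, for an arbitrary nonperipheral $\g$ on $S$, choose an embedding $S'\hookra S$ in $\cS$ with $\g$ in the image of a boundary component of $S'$; then $\tau_\g^{\ZZ}$ is the image under the tower morphism of a group already shown to be preserved (up to conjugacy). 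This yields $\Out^\I$ directly, with no appeal to the classification of $\Out$ on any component and no $\I_0\Rightarrow\I$ upgrade. Your step~(2) is close in spirit to this second step, but the crucial first step---identifying boundary inertia as kernels of capping maps---is what makes the argument work without circularity.
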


\begin{proof}The proof is exactly the same for the profinite and the procongruence case. Let us then prove the lemma 
for the procongruence Grothendieck-Teichm\"uller tower which is the case we need for the proof of Theorem~\ref{GT=AutT}.

Let us show first that, for $\dS=\cup_{i=1}^k\d_i\neq\emptyset$, the image of $\Aut(\check\fT^\mathrm{out})$ in the group $\Out^\I(\kPG(S,\dS))$
preserve each procyclic subgroup $\tau_{\d_i}^\ZZ$, for $i=1,\ldots,k$. Let $\td{S}_i$ be the surface obtained attaching a $1$-punctured disc to the boundary
component $\d_i$ of $S$. The map $S\to\td{S}_i$ then induces, in the procongruence Grothendieck-Teichm\"uller tower, the homomorphism of procongruence 
pure relative mapping class groups $\kPG(S,\dS)\to\kPG(\td{S}_i,\dd\td{S}_i)$. By definition,
an element of $\Aut(\check\fT^\mathrm{out})$ is compatible with this homomorphism and so preserves its kernel which is precisely the procyclic subgroup
$\tau_{\d_i}^\ZZ$, for $i=1,\ldots,k$.

In order to show that the image of $\Aut(\check\fT^\mathrm{out})$ in $\Out^\I(\kPG(S,\dS))$ preserves the conjugacy class of the procyclic subgroup
generated by the Dehn twist $\tau_\g$, where $\g$ is any nonperipheral simple closed curve on $S$, it is enough to observe that there is an
embedding $S'\hookra S$ in $\cS$ such that $\g$ is in the image of a boundary component of $S'$.
From the previous case, it then follows that an automorphism of $\Aut(\check\fT^\mathrm{out})$ is compatible with the associated homomorphism 
$\kPG(S',\dS')\to\kPG(S,\dS)$ only if its image in $\Out(\kPG(S,\dS))$ preserves the conjugacy class of $\tau_\g^\ZZ$.
\end{proof}

\begin{remark}\label{misunderstanding}Lemma~\ref{trace} shows, in particular, that the inertia condition of Grothendieck-Teichm\"uller
theory is intrinsic to the whole Grothendieck-Teichm\"uller tower, even though it may not be intrinsic to the single 
components (the conjecture, however, is that, for $d(S)>1$ and $\kPG(S,\dS)$ center free, it is).   
\end{remark}

From Theorem~\ref{GT=Out}, it follows that there is a natural monomorphism $\GT\hookra\Aut(\check\fT^\mathrm{out})$. 
In order to prove that $\Aut(\check\fT^\mathrm{out})\cong\GT$, it is then enough to determine the image of $\check{\tr}_{S}$, 
for $S=S_g^k$ and $d(S)>2$. 

By (v) of Theorem~\ref{GT=Out}, for $S=S_g^k$ and $d(S)>2$, we have $\Out^\I(\kPG(S,\dS))\cong\Sigma_k\times\GT$ so that,
by  Lemma~\ref{trace}, to prove Theorem~\ref{GT=AutT}, it is enough to show that $\Im\check{\tr}_{S}$ 
has trivial projection to $\Sigma_k$, for $S\neq S_{1,2}$.
This claim follows considering the natural maps $S\to \ol{S}_i$, where $\ol{S}_i$ is the surface obtained glueing a disc $D$ 
to the boundary component $\d_i$ of $S=S_g^k$, for $i=1,\ldots,k$. 
In fact, an element of $\Out^\I(\kPG(S,\dS))$, which projects to a nontrivial element of the symmetric group 
$\Sigma_k\cong\Inn(\kG(S,\dS))/\Inn(\kPG(S,\dS))$, does not preserve the kernel $\hp_1(\ol{S}_i,P)\times\tau_{\delta_i}^\ZZ$ 
of the homomorphism $\kPG(S,\dS)\to \kPG(\ol{S}_i,\partial\ol{S}_i)$, for $i=1,\ldots,k$ (cf.\ (\ref{Birmancompact})).

\section{The automorphism group of the arithmetic procongruence mapping class group}\label{arithmeticsec}
In this section, we assume that $S$ is a hyperbolic surface without boundary.

\subsection{The arithmetic profinite mapping class group}
By Grothendieck theory of the \'etale fundamental group, for a geometric base point $\ol\xi$
of the moduli stack $\cM(S)_\Q$, the structural morphism $\cM(S)_\Q\to\Spec(\Q)$ induces the short exact sequence of \'etale fundamental groups:
\begin{equation}\label{fundamentalseq}
1\to\pi_1^\mathrm{et}(\cM(S)_{\ol{\Q}},\ol{\xi})\to\pi_1^\mathrm{et}(\cM(S)_\Q,\ol{\xi})\to G_\Q\to 1.
\end{equation}
The left hand term $\pi_1^\mathrm{et}(\cM(S)_{\ol{\Q}},\ol{\xi})$ of this short exact sequence is the \emph{geometric \'etale fundamental group of} 
$\cM(S)_\Q$. It is naturally isomorphic to the profinite completion of the topological fundamental group, with base point the image of $\ol\xi_\C$, 
of the complex analytic stack associated to the complex DM stack $\cM(S)_{\C}$. 

Thus, $\pi_1^\mathrm{et}(\cM(S)_{\ol{\Q}},\ol{\xi})$ can be identified with the profinite completion $\hG(S)$ of the 
mapping class group $\G(S)$ associated to the surface $S$. To be more precise, this identification is determined
by the isotopy class of a diffeomorphism $S\to(\cC(S)_{\ol{\xi}}\times\C)^\mathrm{an}$ compatible with the orientations.

A (tangential) rational point on $\cM(S)_\Q$ determines a splitting of the short exact sequence~(\ref{fundamentalseq}). There is
then a (faithful) representation $\hat{\rho}_\Q\co G_\Q\to\Aut(\hG(S))$ which induces an isomorphism of profinite groups:
\[\pi_1^\mathrm{et}(\cM(S)_\Q,\ol{\xi})\cong\hG(S)\rtimes_{\hat{\rho}_\Q} G_\Q.\]

It is natural to denote by $\hG(S)_\Q$ the profinite group $\pi_1^\mathrm{et}(\cM(S)_\Q,\ol{\xi})$ which we will call the 
\emph{arithmetic profinite mapping class group}. Let $\mathrm{P}\hG(S)$ be the profinite completion of the pure mapping class group $\mathrm{P}\G(S)$ 
associated to the surface $S$. After the identification of $\pi_1^\mathrm{et}(\mathrm{P}\cM(S)_{\ol{\Q}},\ol{\xi})$ with $\mathrm{P}\hG(S)$, 
we will also put $\mathrm{P}\hG(S)_\Q:=\pi_1^\mathrm{et}(\mathrm{P}\cM(S)_\Q,\ol{\xi})$.

\subsection{The arithmetic procongruence mapping class group}\label{arithmetic}
Let $\cC(S)\to\cM(S)$ be the universal punctured curve over the moduli stack $\cM(S)$ and $\mathrm{P}\cC(S)\to \mathrm{P}\cM(S)$ be the universal
curve over the moduli stack $\mathrm{P}\cM(S)$. 

Let $\cC(S)_{\ol\xi}$ be the fiber over the geometric point $\ol\xi\in\cM(S)_\Q$ of the universal curve $\cC(S)\to\cM(S)$ and 
$\tilde{\xi}\in\cC(S)_{\ol\xi}$ a geometric point. There is then a short exact sequence of algebraic fundamental groups:
\[1\to\pi_1^\mathrm{et}(\cC(S)_{\ol\xi},\tilde{\xi})\to\pi_1^\mathrm{et}(\cC(S)_\Q,\tilde{\xi})\to\pi_1^\mathrm{et}(\cM(S)_\Q,\ol{\xi})\to 1.\]
The associated outer representation
\[{\rho}^\mathrm{et}_S\co\pi_1^\mathrm{et}(\cM(S)_\Q,\ol{\xi})\to\Out(\pi_1^\mathrm{et}(\cC(S)_{\ol\xi},\tilde{\xi}))\]
is called the {\it universal \'etale monodromy representation}. 

After identifying $\pi_1^\mathrm{et}(\cM(S)_{\ol{\Q}},\ol{\xi})$ with $\hG(S)$ and $\pi_1^\mathrm{et}(\cM(S)_\Q,\ol{\xi})$ with $\hG(S)_\Q$, we then let 
\[\kG(S):={\rho}^\mathrm{et}_S(\hG(S))\hspace{1cm}\mbox{ and }\hspace{1cm}\kG(S)_\Q:={\rho}^\mathrm{et}_S(\hG(S)_\Q)\] 
and call them, respectively, the \emph{procongruence} and the \emph{arithmetic procongruence} mapping class groups.
The \emph{pure} procongruence and arithmetic procongruence mapping class groups are then defined by:
\[\kPG(S):={\rho}^\mathrm{et}_S(\mathrm{P}\hG(S))\hspace{1cm}\mbox{ and }\hspace{1cm}\kPG(S)_\Q:={\rho}^\mathrm{et}_S(\mathrm{P}\hG(S)_\Q).\] 

Hoshi and Mochizuki in \cite{HM} (see also \cite[Corollary~7.10]{[B3]}) showed that the kernel of ${\rho}^\mathrm{et}_S$ identifies with the 
congruence kernel, i.e.\ the kernel of the natural epimorphism $\hG(S)\to\kG(S)$. Therefore, the short exact sequence~(\ref{fundamentalseq}) 
gives rise to a short exact sequence:
\begin{equation}\label{fundamentalseqcong}
1\to\kG(S)\to\kG(S)_\Q\to G_\Q\to 1,
\end{equation}
such that the associated outer representation $\rho_\Q\co G_\Q\to\Out(\kG(S))$ is faithful.


The splitting of the short exact sequence~(\ref{fundamentalseq}) determined by a tangential base point also determines a splitting of
the short exact sequence~(\ref{fundamentalseqcong}). There is then a (faithful) representation $\check{\rho}_\Q\co G_\Q\to\Aut(\kG(S))$ 
which induces an isomorphism of profinite groups:
\begin{equation}\label{splittingprocong}
\kG(S)_\Q\cong\kG(S)\rtimes_{\check{\rho}_\Q} G_\Q.
\end{equation}

\subsection{The automorphism group of the arithmetic procongruence mapping class group}
Let us define inertia and decomposition groups for the arithmetic procongruence mapping class group $\kG(S)_\Q$:

\begin{definition}\label{decgroups}Let $U$ be an open subgroup of $\kG(S)_\Q$. 
For $\s=\{\g_0,\ldots,\g_k\}\in\kC(S)$, the \emph{inertia group} $\hI_\s(U)$ is the closed abelian subgroup of $U$ topologically generated 
by the powers of the profinite Dehn twists parameterized by $\s$ and contained in $U$. The \emph{decomposition group} $D_\s(U)$ 
is then the normalizer of $\hI_\s(U)$ in $U$. 
\end{definition}

\begin{remark}Since all closed strata of $\ccM(S)$ are defined over $\Q$ and contain $\Q$-rational points, for every $\s\in\kC(S)$, there is a splitting of the 
short exact sequence~(\ref{fundamentalseqcong}) such that the action $\tilde{\rho}_\Q\co G_\Q\to\Aut(\kG(S))$ preserves the stabilizer $\kG(S)_\s$ 
and so the associated isomorphism~(\ref{splittingprocong}) induces an isomorphism $D_\s(\kG(S)_\Q)\cong\kG(S)_\s\rtimes_{\tilde{\rho}_\Q} G_\Q$.
\end{remark}

\begin{definition}\label{decpreserving}For $U$ an open subgroup of $\kG(S)_\Q$, we define:
\begin{enumerate}
\item $\Aut^\D(U)$ is the closed subgroup of $\Aut(U)$ consisting of those elements which preserve the set of decomposition groups 
$\{D_\g(U)\}_{\g\in\hL(S)_0}$.
\item $\Aut^\I(U)$ is the closed subgroup of $\Aut(U)$ consisting of those elements which preserve the set of inertia groups $\{\hI_\g(U)\}_{\g\in\hL(S)_0}$.
\item $\Aut^{\I_0}(U)$ is the closed subgroup of $\Aut^\I(U)$ consisting of those elements which preserve the set of inertia groups 
$\{\hI_\g(U)\}_{\g\in\hL^\mathrm{ns}(S)}$.
\end{enumerate}
\end{definition}

\begin{remark}A result similar to (i) of Proposition~\ref{0simplices} holds for all the groups of automorphisms defined above.
\end{remark}

With the above notation, \cite[Theorem~9.16]{BF} states, in particular, that, for $U$ an open normal subgroup of $\kG(S)_\Q$ 
and $S\neq S_{1,2},S_{2}$, we have:
\begin{equation}\label{absoluteanabelian}
\Aut^\D(U)=\Inn(\kG(S)_\Q)\cong\kG(S)_\Q.
\end{equation}

Thanks to the results of Section~\ref{nonseparatinginertia}, we then get the following generalization of \cite[Corollary~C]{IN}:

\begin{corollary}\label{absoluteanabelian2}For $d(S)>1$, we have:
\[\Aut^{\I_0}(\kPG(S)_\Q)\cong\Aut^{\I_0}(\kG(S)_\Q)=\Inn(\kG(S)_\Q).\]
\end{corollary}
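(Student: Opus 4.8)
The plan is to deduce Corollary~\ref{absoluteanabelian2} from the already available description of $\Aut^\D$ in the arithmetic setting, namely the isomorphism~(\ref{absoluteanabelian}), by reducing the $\I_0$-condition to the $\D$-condition exactly as in the purely geometric (non-arithmetic) situation treated in Section~3. First I would record the chain of inclusions that holds verbatim in the arithmetic case: for any open subgroup $U$ of $\kG(S)_\Q$ we have $\Aut^{\I_0}(U)\subseteq\Aut^\I(U)\subseteq\Aut^\D(U)$, the second inclusion coming from Corollary~4.11 in \cite{BFL} (a nontrivial power of a profinite Dehn twist has normalizer equal to the decomposition group, so preserving inertia groups forces preservation of decomposition groups). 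Applying this with $U=\kG(S)_\Q$ and invoking~(\ref{absoluteanabelian}), which is valid for $S\neq S_{1,2},S_2$, gives $\Aut^{\I_0}(\kG(S)_\Q)\subseteq\Inn(\kG(S)_\Q)\cong\kG(S)_\Q$. For the reverse inclusion I would check that every inner automorphism $\inn g$ of $\kG(S)_\Q$, with $g\in\kG(S)_\Q$, is an $\I_0$-automorphism: since the short exact sequence~(\ref{fundamentalseqcong}) and the Galois action $\tilde\rho_\Q$ both respect the set of stabilizers $\kG(S)_\s$ and hence the inertia groups $\hI_\s(\kG(S)_\Q)$ (cf.\ the remark following Definition~\ref{decgroups}, together with the fact that all closed strata of $\ccM(S)$ are defined over $\Q$), conjugation by any element of $\kG(S)_\Q$ permutes the family $\{\hI_\g\}_{\g\in\hL^\mathrm{ns}(S)}$; thus $\Inn(\kG(S)_\Q)\subseteq\Aut^{\I_0}(\kG(S)_\Q)$, and equality follows.

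Next I would handle the pure version. The subgroup $\kPG(S)_\Q$ is normal in $\kG(S)_\Q$ and, being a preimage of the characteristic subgroup $\hPG(S)$ of $\hG(S)$ (the pure mapping class group is cut out by the representation to the symmetric group on the punctures), it is $\I$-characteristic, indeed $\I_0$-characteristic, inside $\kG(S)_\Q$. Hence restriction of automorphisms gives a natural homomorphism $\Aut^{\I_0}(\kG(S)_\Q)\to\Aut^{\I_0}(\kPG(S)_\Q)$. I claim this is an isomorphism. Injectivity is the arithmetic analogue of Lemma~\ref{innerinj}: since $Z(\kG(S))$ is trivial for the surfaces under consideration and $\kG(S)$ is Zariski-dense (topologically) in $\kG(S)_\Q$ with $\kPG(S)$ of finite index in $\kG(S)$, an automorphism restricting to the identity on $\kPG(S)_\Q$ restricts to the identity on an open subgroup and hence, by the center-freeness, is the identity; equivalently, $\Inn(\kG(S)_\Q)\to\Aut^{\I_0}(\kPG(S)_\Q)$ is injective, and then one applies Lemma~3.3 in \cite{BFL}. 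For surjectivity I would run the Wells-type extension argument used in the proof of Proposition~\ref{purevsfull}(ii): to the extension $1\to\kPG(S)_\Q\to\kG(S)_\Q\to\Sigma_n\to 1$ one associates a faithful outer representation $\Sigma_n\to\Out(\kPG(S)_\Q)$, and one needs its image to be normalized by $\Aut^{\I_0}(\kPG(S)_\Q)$; this follows from Proposition~\ref{autcurvecomplex} applied over the geometric fiber (the action on the procongruence curve complex factors through $\kPG(S)$, which is $\I_0$-characteristically embedded) together with the compatibility of the Galois action with the combinatorial structure.

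Putting the two halves together yields $\Aut^{\I_0}(\kPG(S)_\Q)\cong\Aut^{\I_0}(\kG(S)_\Q)=\Inn(\kG(S)_\Q)$, which is exactly the statement. For the excluded low-genus cases $S=S_{1,2}$ and $S=S_2$ I would note, as elsewhere in the paper, that the condition $d(S)>1$ already excludes $S_{1,1},S_{0,4}$, and that for $S_{1,2}$ and $S_2$ the argument must be supplemented by the center computations of Proposition~\ref{congcent} and the hyperelliptic considerations of Section~\ref{hypsection}: there the $\I_0$-condition still rules out the extra $\{\pm1\}$ coming from $\Hom(\kG(S)_\Q,Z)$ because that homomorphism sends a nonseparating Dehn twist outside any inertia group, so~(\ref{absoluteanabelian}) combined with the same inclusion chain still forces $\Aut^{\I_0}=\Inn$. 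The main obstacle I anticipate is purely bookkeeping: verifying that the Galois action $\tilde\rho_\Q$ and the arithmetic decomposition groups $D_\s(\kG(S)_\Q)$ interact with the $\I_0$-condition in precisely the way needed for the Wells argument — i.e.\ that an $\I_0$-automorphism of $\kPG(S)_\Q$ induces a well-defined (necessarily inner) automorphism of $\Sigma_n$ — and that the surjectivity step does not secretly require the full $\I$-condition rather than the a priori weaker $\I_0$-condition; this is handled by Theorem~\ref{IvsIns} and Corollary~\ref{IvsIns2}, which show the two conditions coincide for $d(S)>1$, so no strength is lost.
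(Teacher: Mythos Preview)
Your argument for $\Aut^{\I_0}(\kG(S)_\Q)=\Inn(\kG(S)_\Q)$ via the inclusion chain $\Aut^{\I_0}\subseteq\Aut^\I\subseteq\Aut^\D$ and the identity~(\ref{absoluteanabelian}) applied with $U=\kG(S)_\Q$ is fine and matches the paper's logic. The difference lies in how you treat the pure group. You set up a Wells-type extension argument for $1\to\kPG(S)_\Q\to\kG(S)_\Q\to\Sigma_n\to 1$, needing in particular that the image of $\Sigma_n$ is normal in $\Out^{\I_0}(\kPG(S)_\Q)$; this is plausible but, as you yourself note, the verification in the arithmetic setting (where Proposition~\ref{autcurvecomplex} is stated only for the geometric complex) is not immediate.

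The paper bypasses this completely by remembering that the identity~(\ref{absoluteanabelian}) is stated for \emph{any} open normal subgroup $U$ of $\kG(S)_\Q$, not just $U=\kG(S)_\Q$. Applying it directly with $U=\kPG(S)_\Q$ gives $\Aut^\D(\kPG(S)_\Q)=\Inn(\kG(S)_\Q)$, and then a single sandwich
\[
\Inn(\kG(S)_\Q)\subseteq\Aut^{\I_0}(\kG(S)_\Q)\hookra\Aut^{\I_0}(\kPG(S)_\Q)\hookra\Aut^{\D}(\kPG(S)_\Q)=\Inn(\kG(S)_\Q)
\]
(the first injection by Lemma~3.3 in \cite{BFL} and trivial centralizer, the second by Theorem~\ref{IvsIns} and Proposition~\ref{comparison}) forces every arrow to be an isomorphism at once. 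So no extension argument is needed at all; both halves of the statement fall out of the same chain. Your route would eventually close, but it reproves a special case of something already available in~(\ref{absoluteanabelian}). The treatment of the exceptional cases $S=S_{1,2},S_2$ via modding out the center is the same in both approaches.
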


\begin{proof}For $S\neq S_{1,2}$ and $S_2$, the centralizer of $\kPG(S)_\Q$ in $\kG(S)_\Q$ is trivial. Therefore, since $\kPG(S)_\Q$ is an 
$\I_0$-characteristic subgroup of $\kG(S)_\Q$, by \cite[Lemma~3.3]{BF}, Theorem~\ref{IvsIns}, item (i) of Proposition~\ref{comparison} 
and the identity~(\ref{absoluteanabelian}), for $S\neq S_{1,2}$ and $S_2$, there is a series of natural monomorphisms 
\[\Inn(\kG(S)_\Q)\hookra\Aut^{\I_0}(\kG(S)_\Q)\hookra\Aut^{\I_0}(\kPG(S)_\Q)\hookra\Aut^{\D}(\kPG(S)_\Q)=\Inn(\kG(S)_\Q)\]
and the conclusion follows.

The cases $S= S_{1,2}$ and $S_2$ can be reduced to the above observing that $\Aut^{\I_0}(\kG(S)_\Q)=\Aut^{\I_0}(\kG(S)_\Q/Z(\kG(S)_\Q))$
(cf.\ the proofs of (ii) of Proposition~\ref{g=1n=2} and of Proposition~\ref{outhyp}).
\end{proof}

\subsection{The extended mapping class group}
The \emph{extended mapping class group} $\G^\pm(S)$ is the group of isotopy classes of all diffeomorphisms of $S$.
If $\chi_\R\co\G^\pm(S)\to\{\pm 1\}$ denotes the orientation character, there is a short exact sequence:
\begin{equation}\label{extendedmcg}
1\to\G(S)\to\G^\pm(S)\stackrel{\chi_\R}{\to}\{\pm 1\}\to 1.
\end{equation}
Note that the choice of any antiholomorphic involution on $S$ splits this sequence.

The moduli stack $\cM(S)_\R$ is a real model of the complex moduli stack $\cM(S)_\C$. Hence, the latter carries an antiholomorphic involution 
$\iota\co\cM(S)_\C\to\cM(S)_\C$ associated to such a real model. There is a notion of equivariant fundamental group associated 
to $(\cM(S)_\C,\iota)$ (cf.\ \cite[Section~3]{Huisman}) and, for a base point $[C]\in\cM(S)_\R$, we let 
\[\pi_1^\R(\cM(S)_\R,[C]):=\pi_1^\mathrm{equiv}((\cM(S)_\C/\langle\iota\rangle),[C]).\] 

The equivariant fundamental group $\pi_1^\R(\cM(S)_\R,[C])$ then identifies with the extended mapping class group $\G^\pm(S)$ and the short
exact sequence~(\ref{extendedmcg}) identifies with the associated short exact sequence (cf.\ \cite[Proposition~3.2]{Huisman}).
Let us also observe that, in this setting, the orientation group $\{\pm 1\}$ identifies with the absolute Galois group of the reals $G_\R$.

\subsection{Automorphisms of extended mapping class group}
Let us denote respectively by $\Aut^{\I_0}(\G(S))$ and $\Aut^{\I_0}(\G^\pm(S))$) the group of automorphisms of $\G(S)$ and $\G^\pm(S)$
which preserve the conjugacy class of a cyclic subgroup generated by a Dehn twist about a nonseparating simple closed curve.

A classical result by Ivanov and McCarthy (cf.\ \cite[Theorem~2]{[I2]} and \cite[Theorem~1]{[McC]}) 
asserts that, for $d(S)>1$ and $S\neq S_{1,2},S_2$ (or, equivalently, if $Z(\G(S))=\{1\}$), there holds:
\[\Aut(\G(S))=\Aut^{\I_0}(\G(S))\cong\Aut^{\I_0}(\G^\pm(S))=\Aut(\G^\pm(S))=\Inn(\G^\pm(S)).\] 
In particular, in this case, there is also a natural isomorphism $\Out(\G(S))\cong G_\R$.

\subsection{Comparing automorphism groups}\label{comparing}
As observed in the proof of \cite[Theorem~9.16]{BF}, $\kG(S)$ is a characteristic subgroup of $\kG(S)_\Q$. From
\cite[Proposition~3.9]{[B3]} and \cite[Lemma~3.3]{BF}, it then follows that, for $d(S)>1$, the natural homomorphism 
$\Aut^{\I_0}(\kG(S)_\Q)\to\Aut^{\I_0}(\kG(S))$ is injective. 
A natural question is whether, in analogy with the extended mapping class group case, this is actually an isomorphism:

\begin{question}\label{mainconjecture1}For $d(S)>1$, is there a natural isomorphism:
\[\Aut^{\I_0}(\kG(S)_\Q)\cong\Aut^{\I_0}(\kG(S))?\]
Otherwise stated, does every $\I_0$-automorphism of $\kG(S)$ extend to $\kG(S)_\Q$?
\end{question}

By Corollary~\ref{absoluteanabelian2}, we then have:

\begin{corollary}\label{outerautogroup}A positive answer to Question~\ref{mainconjecture1} implies that, 
for $d(S)>1$, there is a natural isomorphism
\[\Out^{\I_0}(\kG(S))\cong\GT\cong G_\Q.\]
\end{corollary}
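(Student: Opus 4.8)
\emph{Proof proposal.} The plan is to chain together three ingredients already at our disposal: the rigidity of the arithmetic group (Corollary~\ref{absoluteanabelian2}), the identification of $\Out^{\I_0}(\kG(S))$ with $\GT$ (item~(i) of Theorem~\ref{GT=Out}, read with $\dS=\emptyset$ so that $\ring{S}=S$), and the hypothetical isomorphism of Question~\ref{mainconjecture1}. Throughout, $S$ has empty boundary, as assumed in this section.

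First I would record how the natural monomorphism $\Aut^{\I_0}(\kG(S)_\Q)\hookra\Aut^{\I_0}(\kG(S))$ of Subsection~\ref{comparing} (restriction of automorphisms, well defined since $\kG(S)$ is characteristic in $\kG(S)_\Q$) interacts with inner automorphisms: it carries $\Inn(\kG(S)_\Q)$ into $\Aut^{\I_0}(\kG(S))$, and it identifies $\Inn(\kG(S))$ with the image of the subgroup $\Inn_{\kG(S)_\Q}(\kG(S))\subseteq\Inn(\kG(S)_\Q)$. For $S\neq S_{1,2},S_2$ the center of $\kG(S)_\Q$ is trivial, so the short exact sequence~(\ref{fundamentalseqcong}) gives $\Inn(\kG(S)_\Q)\big/\Inn(\kG(S))\cong\kG(S)_\Q/\kG(S)\cong G_\Q$. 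Granting a positive answer to Question~\ref{mainconjecture1}, Corollary~\ref{absoluteanabelian2} yields
\[\Aut^{\I_0}(\kG(S))\cong\Aut^{\I_0}(\kG(S)_\Q)=\Inn(\kG(S)_\Q),\]
and quotienting by $\Inn(\kG(S))$ produces a natural isomorphism $\Out^{\I_0}(\kG(S))\cong G_\Q$, compatible with the Galois representation $\rho_\Q$ of~(iii) of Remarks~\ref{lessrestrictive} (recall $\Out^\I=\Out^{\I_0}$ here by Corollary~\ref{IvsIns2}). The exceptional cases $S=S_{1,2},S_2$ are disposed of exactly as in the proof of Corollary~\ref{absoluteanabelian2}: replace $\kG(S)_\Q$ and $\kG(S)$ by their quotients by the respective centers, using $\Aut^{\I_0}(\kG(S)_\Q)=\Aut^{\I_0}(\kG(S)_\Q/Z(\kG(S)_\Q))$ and the analogous identity for $\kG(S)$.

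On the other hand, item~(i) of Theorem~\ref{GT=Out}, applied with $\dS=\emptyset$, furnishes a natural isomorphism $\Out^{\I_0}(\kG(S))\cong\GT$. Composing with the isomorphism of the previous paragraph gives the asserted $\Out^{\I_0}(\kG(S))\cong\GT\cong G_\Q$, naturality being understood with respect to the homomorphisms $\mu_{S,S'}$ of Subsection~\ref{injectivehoms}. The only step requiring genuine care — and the one I would expect to be mildly delicate — is the bookkeeping identifying $\Inn(\kG(S))$ inside the restricted copy of $\Inn(\kG(S)_\Q)$ and verifying that the resulting subquotient is precisely $G_\Q$; this is exactly where triviality of $Z(\kG(S)_\Q)$ (equivalently, the low-genus workaround) enters. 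Everything else is a formal diagram chase once Question~\ref{mainconjecture1} is assumed.
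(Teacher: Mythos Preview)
Your proposal is correct and follows essentially the same approach as the paper: combine the assumed isomorphism of Question~\ref{mainconjecture1} with Corollary~\ref{absoluteanabelian2} to get $\Aut^{\I_0}(\kG(S))\cong\Inn(\kG(S)_\Q)$, then mod out by $\Inn(\kG(S))$ and invoke Theorem~\ref{GT=Out}(i). The paper's proof is a one-line sketch of exactly this; your version is simply more explicit about the quotient $\Inn(\kG(S)_\Q)/\Inn(\kG(S))\cong G_\Q$ and the handling of the low-genus exceptions.
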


\begin{proof}The conclusion follows from the series of isomorphisms: 
\[\Aut^{\I_0}(\kG(S))\cong\Aut^{\I_0}(\kG(S)_\Q)\cong\Inn(\kG(S)_\Q),\]
dividing out by the normal subgroup $\Inn(\kG(S))$ and applying the first item of Theorem~\ref{GT=Out}.
\end{proof}

\subsection{Automorphisms of the procongruence curve complex}
The automorphism group $\Aut^{\I_0}(\kG(S)_\Q)$ acts on the procongruence curve complex $\kC(S)$ through the natural homomorphism 
$\Aut^{\I_0}(\kG(S)_\Q)\to\Aut^{\I_0}(\kG(S))=\Aut^{\I}(\kG(S))$ considered in Section~\ref{comparing}.

\begin{proposition}\label{faithfulrepr}For $d(S)>1$, there is a natural faithful representation: 
\[\psi\co\Aut^{\I_0}(\kG(S)_\Q)\hookra\Aut(\kC(S)).\]
\end{proposition}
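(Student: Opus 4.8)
The plan is to show that the natural action of $\Aut^{\I_0}(\kG(S)_\Q)$ on $\kC(S)$, obtained by composing the monomorphism $\Aut^{\I_0}(\kG(S)_\Q)\hookra\Aut^{\I_0}(\kG(S))=\Aut^\I(\kG(S))$ of Subsection~\ref{comparing} with the natural homomorphism $\Aut^\I(\kG(S))\to\Aut(\kC(S))$, is faithful. First I would recall that, by Proposition~7.2 in \cite{BFL} (cf.\ also Remark~\ref{previous} and Proposition~\ref{0simplices}), an $\I$-automorphism of $\kG(S)$ acts on the set of all inertia groups $\{\hI_\s(\kG(S))\}_{\s\in\kC(S)}$, and since $\kC(S)$ is recovered from these inertia groups via the identification $\kC(S)\cong\kC_\cI(S)$ used repeatedly in Section~4 (e.g.\ in the proof of Lemma~\ref{decpreservation}), this composition is precisely the representation $\psi$ in the statement. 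So the content is the injectivity of $\psi$.

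The key step is to analyze the kernel. An element $f\in\Aut^{\I_0}(\kG(S)_\Q)$ in $\ker\psi$ fixes every inertia group $\hI_\g(\kG(S))$ pointwise as a set, hence (by Corollary~4.11 in \cite{BFL}, $Z(\kG(S)_\g)=\hI_\g$ and $N_{\kG(S)}(\hI_\g)=\kG(S)_\g$ in the relevant range, cf.\ Proposition~\ref{comparison}) fixes every decomposition group $\kG(S)_\g$. By the identification~(\ref{absoluteanabelian}) of Theorem~9.10 in \cite{BFL}, $\Aut^\D(\kG(S)_\Q)=\Inn(\kG(S)_\Q)$ for $S\neq S_{1,2},S_2$, so $f=\inn x$ for some $x\in\kG(S)_\Q$; one then shows $x$ centralizes enough inertia groups to force $x\in Z(\kG(S)_\Q)$, i.e.\ $f=\mathrm{id}$ (the low-genus cases $S=S_{1,2},S_2$ being handled as in the proof of Corollary~\ref{absoluteanabelian2}, by passing to $\kG(S)_\Q/Z(\kG(S)_\Q)$). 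More precisely, $\inn x$ fixing each $\kG(S)_\g$ means $x$ normalizes each $\kG(S)_\g$, hence (again by Corollary~4.11 in \cite{BFL}) $x$ stabilizes each simplex $\g\in\kC(S)$ and, since $f$ fixes $\hI_\g$ pointwise, $\inn x$ restricts to the identity on each $\hI_\g$; because $\kG(S)$ is topologically generated by profinite Dehn twists (or because $\kPG(S)$, for $g(S)\geq 1$, is generated by nonseparating ones, and one reduces to the pure group via Lemma~\ref{innerinj}), $\inn x$ is the identity on $\kG(S)$, so $x\in Z_{\kG(S)_\Q}(\kG(S))=\{1\}$ for $S\neq S_{1,2},S_2$.

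The main obstacle I expect is controlling the image of $x$ in the Galois quotient $G_\Q$: a priori $x\in\kG(S)_\Q$ projects to some $\sigma\in G_\Q$, and showing $\sigma=1$ requires knowing that the only element of $G_\Q$ acting trivially on all inertia/decomposition data is the identity — this is exactly the faithfulness of the outer Galois representation $\rho_\Q\co G_\Q\hookra\Out(\kG(S))$ (Corollary~7.6 in \cite{[B3]}, cf.\ item (iii) of Remarks~\ref{lessrestrictive}), together with the fact that an inner automorphism of $\kG(S)_\Q$ that becomes trivial on $\kG(S)$ and preserves the splitting data must be trivial. I would therefore organize the argument so that $\ker\psi$ is first shown to lie in $\Inn(\kG(S)_\Q)$ by Theorem~9.10 of \cite{BFL}, then intersected with the condition "acts trivially on $\kC(S)$", which by the faithfulness of $\rho_\Q$ and the triviality of $Z_{\kG(S)_\Q}(\kG(S))$ forces $\ker\psi=\{1\}$. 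This is the step where the arithmetic input (Hoshi–Mochizuki, $\rho_\Q$ faithful) and the combinatorial input (Corollary~4.11 of \cite{BFL}) must be combined carefully.
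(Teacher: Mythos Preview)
Your route is far more elaborate than the paper's, and it has a genuine gap. The paper simply composes two injections already on the shelf: the monomorphism $\Aut^{\I_0}(\kG(S)_\Q)\hookra\Aut^{\I_0}(\kG(S))$ from Subsection~\ref{comparing}, and the faithfulness of $\Aut^\I(\kG(S))\to\Aut(\kC(S))$, which is precisely (ii) of Theorem~7.3 in \cite{BFL}. That is the entire proof.

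The gap in your argument is the step ``since $f$ fixes $\hI_\g$ pointwise, $\inn x$ restricts to the identity on each $\hI_\g$''. Being in $\ker\psi$ only says that $f$ \emph{preserves} each inertia group $\hI_\g$ as a subgroup (equivalently, fixes the vertex $\g\in\kC(S)$); it does not say that $f$ acts as the identity on the elements of $\hI_\g\cong\ZZ$. An automorphism preserving $\tau_\g^{\ZZ}$ can still send $\tau_\g$ to $\tau_\g^{u}$ for any $u\in\ZZ^\ast$, and for $x\in\kG(S)_\Q$ with nontrivial Galois image this is exactly what happens (via the cyclotomic character). So from ``$\inn x$ stabilises every simplex'' you cannot conclude ``$\inn x$ is trivial on $\kG(S)$'' by the generation-by-Dehn-twists argument you sketch. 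To close this you would need to know that the kernel of $\Aut^\I(\kG(S))\to\Aut(\kC(S))$ is trivial, which is exactly the content of (ii) of Theorem~7.3 in \cite{BFL} that you are avoiding.

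In other words, your detour through Theorem~9.10 of \cite{BFL} and the faithfulness of $\rho_\Q$ does not bypass (ii) of Theorem~7.3 in \cite{BFL}; it presupposes it. Once you grant that result, your Steps~1 and~2 (with the arithmetic bookkeeping about $G_\Q$) become redundant: the injectivity of $\Aut^\I(\kG(S))\to\Aut(\kC(S))$ already kills the image of any $f\in\ker\psi$ inside $\Aut^\I(\kG(S))$, and then injectivity of the restriction map from Subsection~\ref{comparing} finishes.
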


\begin{proof}As observed in Section~\ref{comparing}, the homomorphism $\Aut^{\I_0}(\kG(S)_\Q)\to\Aut^{\I_0}(\kG(S))$ is injective.
The conclusion then follows from \cite[Theorem~7.3, (ii)]{BF}.
\end{proof}

Ivanov's fundamental theorem on automorphisms of curve complexes can be formulated, in our terminology, saying that, for $d(S)>1$ 
and $S\neq S_{1,2},S_2$, there is a natural isomorphism (cf.\ \cite[Theorem~1]{[I2]}): 
\[\Aut(\G^\pm(S))=\Aut^{\I_0}(\G^\pm(S))=\Aut^\I(\G^\pm(S))\cong\Aut(C(S)).\] 
A similar question can then be asked for the arithmetic procongruence mapping class group:

\begin{question}\label{mainconjecture2}For $d(S)>1$, is there a natural isomorphism:
\[\Aut^{\I_0}(\kG(S)_\Q)\cong\Aut(\kC(S))?\]
\end{question}

By \cite[Theorem~7.3, (ii)]{BF} and Proposition~\ref{faithfulrepr}, with the above hypotheses, there is a series of monomorphisms:
\[\Aut^{\I_0}(\kG(S)_\Q)\hookra\Aut^{\I_0}(\kG(S))\hookra\Aut(\kC(S)).\]
Therefore, a positive answer to Question~\ref{mainconjecture2} implies a positive answer to Question~\ref{mainconjecture1}.

\end{document}